\definecolor{myyellow}{RGB}{245, 255, 189} 
\newcolumntype{M}[1]{>{\centering}m{#1}}
\newcommand{\tagarray}{%
\mbox{}\refstepcounter{equation}%
$(\theequation)$%
}
\tikzstyle{every picture}+=[remember picture]
\newtheorem{theorem}{Theorem}[section]
\newtheorem{lemma}[theorem]{Lemma}
\newenvironment{remark}[1][Remark]{\begin{trivlist}
\item[\hskip \labelsep {\bfseries #1}]}{\end{trivlist}}
\newcommand {\zdef}{\stackrel{def}{=}}
\newcommand {\tr}[3]{\mathcal T(#1,#2,#3)} 
\newcommand {\trd}[3]{\mathcal T(\s{#1},\s{#2},\s{#3})} 
\newcommand {\al}{\alpha}
\newcommand {\zsum}{\displaystyle\sum} 
\newcommand {\ot}{{t_{|T|}}}
\newcommand {\uu}{u_1}
\newcommand {\ud}{u_2}
\newcommand {\ci}{c_i}
\newcommand {\ciu}{c_{i+1}}
\newcommand {\cid}{c_{i+2}}
\newcommand {\Ot}{\overline t}
\newcommand {\Tt}{\tilde t}
\newcommand {\Os}{\overline s}
\newcommand {\Ts}{\tilde s}
\newcommand {\id}[1]{\,\mathbb{1}\s{#1}}
\newcommand {\s}[1]{\{#1\}}
\newcommand{\trtikz}[7]{

  \begin{tikzpicture}[baseline=(current bounding box.center)]
%x = 17.5mm, y = -7.5mm,baseline={($(current bounding box.center)+(0,-0.5ex)$)}]

    \newcommand\ofhUn{0.23}; % Position du texte p.r. a l'horizontale
                             % (cluster 1)
    \newcommand\ofhDeux{0.23};   %   Position    du   texte   p.r.   a
                               % l'horizontale
                             % (cluster 2)
    \newcommand\cw{0.3cm}; % Taille horizontale minimale d'un cluster
    \newcommand\ch{0.6cm}; % Taille verticale minimale d'un cluster

    \tikzstyle{cluster}=[draw,  inner  sep=1.5pt,  minimum  width=0.6cm,
    minimum  height  = 1cm,  text  centered,  thick, circle,  densely
    dashed, fill=gray!20]

    \tikzstyle{fix_node}=[inner   sep=0pt,minimum   height=0mm,
        minimum width = 0mm, text centered];

    \tikzstyle{move_node}=[fix_node,  ellipse,  minimum  height=0.4cm,
    minimum width=0.4cm, draw, densely dotted];

    \tikzstyle{fleche}=[->, >=latex]

    % Clusters

    % If cluster 1 only have one line
    \ifthenelse{\equal{#1}{}}{
      \renewcommand\ofhUn{0};
    }{
      \node[fix_node] (n1) at (0,\ofhUn){#1};
      \begin{pgfonlayer}{background}
      \end{pgfonlayer}
    }

    % If no element of cluster 1 is moving
    \ifthenelse{\equal{#3}{}}{
      \node[fix_node] (n2) at (0,-\ofhUn){#2};
    }{
      \node[move_node] (n2) at (0,-\ofhUn){#2};
    }

    % Add cluster 2 shape around the node(s)
    \begin{pgfonlayer}{background}
      \ifthenelse{\equal{#1}{}}{
        \node[cluster, fit=(n2)] (c1) at (0,0){};
      }{
        \node[cluster, fit=(n1) (n2)] (c1) at (0,0){};
      }
    \end{pgfonlayer}

    % If cluster 2 only have one line
    \ifthenelse{\equal{#4}{}}{
      \renewcommand\ofhDeux{0};
    }{
      \node[fix_node] (n4) at (#7,-\ofhDeux) {#4};
    }

    % If no element of cluster 2 is moving
    \ifthenelse{\equal{#6}{}}{
      \node[fix_node] (n3) at(#7,\ofhDeux){#5};
    }{
      \node[move_node] (n3) at(#7,\ofhDeux){#5};
    }

    % Add cluster 2 shape around the node(s)
    \begin{pgfonlayer}{background}
      \ifthenelse{\equal{#4}{}}{
        \node[cluster, fit=(n3)] (c2) {};
      }{
        \node[cluster, fit=(n3) (n4)] (c2) {};
      }
    \end{pgfonlayer}
    
    % Fleches
    \ifthenelse{\equal{#3}{}}{}{
    \draw[fleche] (n2) to[out=-20, in=-160] 
       %node[midway, below]{#2} 
       (c2);
     }

    \ifthenelse{\equal{#6}{}}{}{
    \draw[fleche] (n3) to[out=160, in=20] 
       %node[midway, above]{#4} 
       (c1);
     }

  \end{tikzpicture}
}
\journal{Discrete Applied Mathematics}
\begin{document}

\begin{frontmatter}

%% Title, authors and addresses

%% use the tnoteref command within \title for footnotes;
%% use the tnotetext command for theassociated footnote;
%% use the fnref command within \author or \address for footnotes;
%% use the fntext command for theassociated footnote;
%% use the corref command within \author for corresponding author footnotes;
%% use the cortext command for theassociated footnote;
%% use the ead command for the email address,
%% and the form \ead[url] for the home page:
%% \title{Title\tnoteref{label1}}
%% \tnotetext[label1]{}
%% \author{Name\corref{cor1}\fnref{label2}}
%% \ead{email address}
%% \ead[url]{home page}
%% \fntext[label2]{}
%% \cortext[cor1]{}
%% \address{Address\fnref{label3}}
%% \fntext[label3]{}

\title{On  the   polyhedron  of  the   $K$-partitioning  problem  with
  representative variables}

%% use optional labels to link authors explicitly to addresses:
%% \author[label1,label2]{}
%% \address[label1]{}
%% \address[label2]{}
\author[label1,label2]{Zacharie \textsc{Ales}}
%\ead{zacharie.ales@insa-rouen.fr}
\author[label1]{Arnaud \textsc{Knippel}\corref{cor1}}
\cortext[label2]{arnaud.knippel@insa-rouen.fr,\\\indent  +33 2 32  95 66
  25, \\\indent INSA de
 Rouen, Avenue de l'Universit\'e, 76801, Saint-Etienne du Rouvray}
\author[label2]{Alexandre \textsc{Pauchet}}
%\ead{alexandre.pauchet@insa-rouen.fr}

\address[label1]{LMI INSA Rouen (EA 3226)}
\address[label2]{LITIS INSA Rouen (EA 4051)}

\begin{abstract}

  The $K-$partitioning  problem consists of  partitioning the vertices
  of a  graph in $K$  sets so  as to minimize  a function of  the edge
  weights.  We introduce a linear mixed integer formulation with edge
  variables   and   representative   variables.    We   consider   the
  corresponding   polyhedron   and   show   which   inequalities   are
  facet-defining.   We   study  several  families   of  facet-defining
  inequalities  and  provide experimental  results  showing that  they
  improve significantly the linear relaxation of our formulation.

%      We  present  a  polyhedral  approach for  the  graph  $K-$partitioning
% problem. Given a complete graph  $G(V,E)$ with $n$ vertices we want to
% partition the vertices  in exactly $K$ clusters so  that we minimize a
% linear function of the costs $d_{i,j}$ of the edges in the
% clusters.  

% In this report, we present an integer formulation of the
%   problem which includes both edge variables and representative
% variables. We analyze the  structure of the corresponding polytope and
%  highlight trivial and non-trivial facets.

\end{abstract}

\begin{keyword}
combinatorial optimization \sep polyhedral approach \sep graph partitioning
%% keywords here, in the form: keyword \sep keyword

%% PACS codes here, in the form: \PACS code \sep code

%% MSC codes here, in the form: \MSC code \sep code
%% or \MSC[2008] code \sep code (2000 is the default)

\end{keyword}

\end{frontmatter}

%% \linenumbers

%% main text
\section{Introduction}
\label{sec:introduction}

Graph partitioning refers  to partitioning the vertices of  a graph in
several  sets,  so  that a  given  function  of  the edge  weights  is
minimized (or maximized).   In many papers this function  is linear so
that  minimizing the weight  of the  edges in  the different  parts is
equivalent to maximizing  the total weight of the  multicut defined by
the  node  sets. For  this  reason different  names  are  used in  the
literature   and   the    most   frequent   are   graph   partitioning
problem~\cite{chopra1993partition,bichot2013graph,ferreira1998node}
and min-cut  problem~\cite{johnson1993min}.  When maximizing  the cut,
with   positive   weights,  the   problem   is   called  the   max-cut
problem~\cite{mahjoub1986polytope,garey1976some}  and is  known  to be
NP-complete.   The  problem is  sometimes  called clique  partitioning
problem            when             the            graph            is
complete~\cite{grotschel1990facets,oosten2001clique}, while Chopra and
Rao~\cite{chopra1993partition}  note  that  the  general case  can  be
solved by adding  edges to obtain a complete graph.  This is the point
of  view we  adopt here  for the  sake of  simplicity.  However  it is
possible to  derive specific valid inequalities for  sparse graphs, as
in~\cite{ferreira1998node}.

This  general   problem  has   many  applications  (see   for  example
\cite{bichot2013graph})  and many  variants, which  in most  cases are
NP-hard~\cite{garey1976some}.  The number of sets in a solution may be
specified as a  part of the problem definition or  not.  In this paper
we consider  the former case that we  call the \textit{K-partitioning}
problem, where  $K$ is the number of  sets.  

%This problem can be formulated as a quadratic program  or as
% a linear integer program (see \cite{fan2010linear}).

% In  this paper,  we  are  mainly interested  in  solving instances  of
% moderate size (say  20 to 100 vertices and 6 to  10 sets).

Our motivation  comes from a clustering problem  for analyzing dialogs
in  psychology \cite{alesmethodology}.  Dialogs  can be  encoded using
two-dimensional tables  (or series  of item-sets), along  which dialog
patterns,   representative   of    human   behaviors,   are   repeated
approximately.  Partitioning  a graph of dialog  patterns would enable
to group  similar instances and therefore  to characterize significant
behaviors.  For this  application, instances  commonly are  a complete
graph of 20 to 100 vertices to  partition in 6 to 10 sets. Whereas big
instances  require   approximate  solutions,  we   are  interested  in
improving  the exact methods  based on  branch and  bound (such  as in
\cite{fan2010linear,hager2013exact,bonami2012solution}),  by  studying
more  precisely the polyhedral  structure of  a linear  formulation in
order to provide better bounds.

% différentes formulation node-cluster  et edge et que certains mixent
% les deux (citer auteurs utilisant ces formulations)

A general linear integer formulation using edge variables was proposed
in  \cite{grotschel1990facets}, together  with  several facet-defining
inequalities.  We call this  formulation edge formulation or node-node
formulation, and it is often considered stronger than the node-cluster
formulation~\cite{bonami2012solution}, although this may depend on the
data  sets.  However  the edge  formulation doesn’t  allow to  fix the
number   of   sets   easily,   as   opposite   to   the   node-cluster
formulation. Some  authors use a formulation with  both edge variables
and node-cluster  variables. A formulation with  an exponential number
of  constraints has  been  proposed in~\cite{ferreira1996formulations}
for  a  variant of  partitioning  with  a bound  on  the  size of  the
clusters, applied  to sparse  graphs.  More compact  formulations have
been   proposed  based   on   the  linearization   of  the   quadratic
formulation~\cite{kaibel2011orbitopal,fan2010linear}.
In~\cite{kaibel2011orbitopal}  the weights of  the edges  are positive
and the total edge weight is
minimized, while  in~\cite{fan2010linear} it is
maximized.  When  considering  the  triangle  inequalities  from  both
formulations~\cite{chopra1993partition}, graph  with arbitrary weights
on the edges can be partitioned.

All these formulations have a common  drawback: they contain a lot of
symmetry and this can considerably slow down methods based on branch and bound or
branch and  cut.  A way  to deal with the  symmetry is to
work on the branching strategy.  Kaibel et
al~\cite{kaibel2011orbitopal}  have proposed  a  general tool,  called
orbitopal  fixing for  that  purpose.   Another way  is  to break  the
symmetry directly in the  formulation.  This approach has already been
used  in~\cite{campelo2008asymmetric} for  a vertex  coloring problem.
More recently a similar idea has been applied to break the symmetry in
the node-cluster formulation~\cite{bonami2012solution}.  In this paper
we   propose    a   formulation   based   on    the   edge   variables
of~\cite{grotschel1990facets,labbe2010size}  and  additional variables
that we call  representative variables. This allows not  only to break
the symmetry, but also to fix the number $K$ of sets.

% Bounds on  the size of the sets  have been considered in
% \cite{labbe2010size}.  Here  we study the  case where the number  K of
% sets is fixed.   We propose a formulation based  on the edge variables
% of    \cite{grotschel1990facets}   and    \cite{labbe2010size},   with
% representative variables, the idea being to distinguish a vertex in each
% set - the  vertex with the lowest index  in the set - in  order to break
% some of the  symmetry in the solution set.   This approach has already
% been used in \cite{campelo2008asymmetric} for the vertex coloring
% problem.

We present  in Section~\ref{sec:formulation} our  formulation.  We study in  Section~\ref{sec:dimension} the
dimension of  the polyhedron  $P_{n,K}$ associated to  our formulation.
We  characterize   in  Section~\ref{sec:trivial} all  the   trivial  facet-defining
inequalities. In Sections~\ref{sec:chorded},~\ref{sec:2part} and~\ref{sec:general}, we respectively study the so-called $2-$chorded
cycle  inequalities,  the  2-partition  inequalities and  the  general
clique inequalities and we determine  cases where  they define
facets  of  $P_{n,K}$.   In   Section~\ref{sec:strength}  we  strengthen  the  triangle
inequalities from  our formulation  that do not  define facets  and we
show that most of the  time the strengthened triangle inequalities are
facet-defining.  In Section~\ref{sec:paw} we study a new family of inequalities
called \textit{paw inequalities} and identify when
they  are  facet-defining.  In  the  last  section  we illustrate  the
improvement on the linear relaxation  value of our formulation for the
facet-defining  inequalities of  the previous  sections,  for complete
graphs with different kind of weights.

\section{A mixed integer linear formulation}
\label{sec:formulation}

Let $V=\{1,\hdots, n\}$ be a set of indexed vertices and $G=(V,E)$ the
complete  graph induced  by  $V$. A  $K\mbox{-partition}$  $\pi$ is  a
collection of  $K$ non-empty subsets  $C_1, C_2, \hdots,  C_K$, called
clusters, such  that $ \forall  i\neq j,$ $C_i\cap  C_j=\emptyset$ and
$\bigcup_{i=1}^K C_i = V$.

To  each   $K$-partition  $\pi$,  we  associate   a  characteristic  vector
$x^\pi\in\{0,1\}^{|E|+|V|}$ such that:
\begin{itemize}
\item for each edge $uv\in E$, $x^\pi_{u,v}$ (equivalent to
  $x^\pi_{v,u}$) is equal to $1$ if $u,v\in
  C_i$ for some
  $i$ in $\{1, 2, \hdots, K\}$ and $0$ otherwise;
\item  for each  vertex $u\in  V$, $x^\pi_u=1$  if $u$  is the
 vertex with the smallest index of its cluster (in that case $u$ is said to
 be the \textit{representative} of its cluster) and $0$ otherwise.
\end{itemize}

An edge $uv\in E$  is said to be \textit{activated} for a given
partition  $\pi$ if  $x^\pi_{u,v}=1$. In  this  context, $u$  is said  to
\textit{be linked} to $v$ and vice versa.  A vertex $i$ is said to be
\textit{lower} than another vertex $j$ (noted $i<j$) if index $i$ is lower than
index $j$.

Let  $d_{i,j}$ denote the  cost of  edge $ij\in  E$.  We  consider the
following formulation for the $K-$partitioning problem:

\begin{center}                                                              
  $(P_1)\left\{ \begin{array}{lll}\min
      \displaystyle{\sum_{ij\in E}d_{i,j}x_{i,j}}\\
      x_{i,k}+x_{j,k}-x_{i,j}\leq  1  &  \forall i,j,k\in  V,\,  i\neq
      k,\,j\neq k,\, i<j & \tagarray\\
      x_j +x_{i,j} \leq 1& \forall i,j\in V,\, i<j&
      \tagarray\label{eq:lrep}\\
      x_j + \displaystyle\sum_{i=1}^{j-1} x_{i,j}\geq 1 & \forall j\in
      V& \tagarray\label{eq:urep}\\
      \displaystyle\sum_{i=1}^n x_i = K & & \tagarray\label{eq:nbClusters}\\
      x_{i,j}\in\{ 0,1\} & ij\in E\\
      x_i\in[0,1] & i\in V\end{array}\right.$
\end{center} 

Constraints (1), called \textit{triangle inequalities}, ensure that if
two  incident edges  $ij$ and  $jk$ are  activated then  $ik$  is also
activated.    Note  that   their   are  $\frac{n(n-1)(n-2)}{2}$   such
constraints (three for each triangle ${a,b,c}$ of $G=(V,E)$). Constraints $(2)$,
called \textit{upper  representative inequalities}, ensure  that every
cluster  contains  no  more  than  one representative.  If  $j$  is  a
representative then it is not linked  to any lower vertex $i$.  If $j$
is linked  to such  a lower  vertex then it  is not  a representative.
Constraints $(3)$,  called \textit{lower representative inequalities},
guarantee that a cluster contains at least one representative. Indeed,
on the one hand if $j$ is not a representative then it is linked to at
least one lower vertex, on the other  hand if $j$ is not linked to any
of these vertices then it is a representative. Finally, constraint (4)
ensures that the number of clusters is equal to $K$.

Note  that  in  the  above  mixed linear  program  the  representative
variables can be relaxed as fixing all edge
variables to $0$ or $1$  forces the representative variables to be in
$\s{0,1}$. Hence, we only have $|E|$ binary variables.

As the polyhedron  associated to the above formulation  is  not
full-dimensional,  we fix $x_1$  to $1$  (since vertex  $1$ is  always a
representative)  and  substitute $x_2$  by  $1-x_{1,2}$  and $x_3$  by
$K-2+x_{1,2}-\sum_{i=4}^n  x_i$.  Equation  $(4)$ can  now  be removed
since the number of clusters in a solution is ensured to be $K$ by the
expression  substituted to  $x_3$.   These modifications  lead to  the
following linear mixed integer program:\\

            \[(P_2)\left\{ \begin{array}{ll@{\hspace{0.1cm}}l}\min
                \displaystyle{\sum_{ij\in E}w_{i,j}x_{i,j}}\\
                x_{i,j}+x_{j,k}-x_{i,k}\leq 1  & \forall i,j,k\in V,\,
                i\neq
                j,\,i\neq k,\, j<k & (1)\\

                x_j +x_{i,j} \leq 1& \forall i\in V,j\in V-\{1,2,3\},\, i<j&
                (2)\\

                \displaystyle\sum_{j=4}^n x_j - x_{1,2} - x_{i,3} \geq K-3 & \forall
                i\in \{1,2\} & (2')\\

                x_j  + \displaystyle\sum_{i=1}^{j-1}  x_{i,j}\geq  1 &
                \forall j\in
                V& (3)\\

                \displaystyle\sum_{i=4}^n x_i - x_{1,2} - x_{1,3} - x_{2,3} \leq K-3 &
                &(3')\\

                \displaystyle\sum_{i=4}^n x_i - x_{1,2} \geq K-3 &
                & 
                \refstepcounter{equation}(\theequation)
\\

                \displaystyle\sum_{i=4}^n  x_i  -  x_{1,2}\leq  K-2  &
                & \refstepcounter{equation}(\theequation) \\

                x_{i,j}\in\{ 0,1\} & ij\in E\\
                x_i\in[0,1] & i\in V\end{array}\right..\]

As  a result, the
characteristic  vector of  a partition  $\pi$ no  longer  contains the
components $x_1$, $x_2$ and  $x_3$.  The vector $x^{\pi}$ now contains
the $n-3$  remaining representative  components followed by  the $|E|$
edges components:
\[(x^\pi)^T=(x_4,  \hdots,  x_n,  x_{1,2}, \hdots,  x_{1,n},  x_{2,3},
\hdots, x_{n-1,n}).\]

However, for a given vector $\al\in\mathbb R^{|E|+|V|-3}$ the three
coefficients related  to $x_1$, $x_2$ and $x_3$  ($\al_1$, $\al_2$ and
$\al_3$) may appear
in subsequent proofs to assist the understanding. In that case, they are  equal to zero.
\bigskip

Let $P_{n,K}$ be  the convex hull of all integer
points which are feasible for $(P_2)$:
\[ P_{n,K} = conv\s{x\in\s{0,1}^{|E|+|V|-3} |\, x \mbox{ satisfies } (1),
  (2), (2'), (3), (3'), (5), (6)}\]

To simplify the notations, a  singleton $\s{s}$ may be denoted by $s$.
Likewise for a given vector $\al\in\mathbb R^{|E|+|V|-3}$ and a subset
$E_1$ of  $E$, the term  $\al(E_1)$ is used  to denote the sum  of the
$\al$ components in $E_1$  ($\sum_{e\in E_1} \al_{e}$). Finally, if
we  consider two  subsets of  $V$,  $V_1$ and  $V_2$, the  sum of  the
$\alpha$   inter-set   components   $\sum_{i\in  V_1}\sum_{j\in   V_2}
\al_{i,j}$  and   the  sum   of  the  $\alpha$   intra-set  components
$\sum_{i,j\in  V_1,  i<j}   \al_{i,j}$  are  respectively  denoted  as
$\al(V_1, V_2)$ and $\al(V_1)$.

% \textbf{Notation} In a partition $\pi$, the representative vertex of a given cluster $C$
% is referred to as $r(\pi,C)$.  The second lowest vertex of $C$
%   (\textit{i.e.}: $\min(C\backslash r(\pi,C))$) is denoted by $r_2(\pi,C)$.

% \begin{definition}
%   For a given transformation $\mathcal T$ let $r_1$, $r_2$, $r'_1$ and
%   $r'_2$ be respectively the representatives of clusters $C_1$, $C_2$,
%   $S'_1$ and $S'_2$.
% \end{definition}

% \begin{definition}
%   A \textit{matching} in a graph $G(V,E)$ is a set of edges such that no two
%   edges of $M$ have common endnodes.
% \end{definition}

% \begin{definition}
%   A $p-$matching $M$ is a matching whose size is equal to $p$.
% \end{definition}

% \begin{definition}
%   A vertex $v$ is said to be \textit{covered} by a matching $M$ if it is
%   the endnode of an edge in $M$.
% \end{definition}

% \begin{definition}
%   A matching $M$ in a graph $G(V,E)$ is \textit{perfect} if it covers each vertex in $V$.
% \end{definition}

The truth function $\mathbb{1}$ of a boolean expression $e$ (noted $\id{e}$) is
equal to $1$ if $e$ is true and $0$ otherwise.

Given two disjoint clusters $C_1$,  $C_2$ and a set of vertices $R\subset
C_1\cup  C_2$ we  define  the following  transformation: $\mathcal  T:
\{C_1,C_2,R\}\mapsto \{C_1', C_2'\}$,  with $ C_1'=(C_1\backslash R)\cup
(R\backslash C_1)$ and $ C_2'=(C_2\backslash R)\cup (R\backslash C_2)<$.  The
corresponding        transformation       is        presented       in
Figure~\ref{fig:transformationDef}.  This  operator will be  used in the
following   to  highlight  relations   between  the   coefficients  of
hyperplanes of $\mathbb R^{|E|+|V|-3}$.

    \begin{figure}[h]
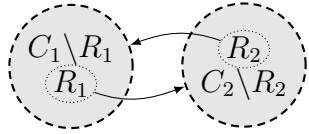

      \centering
      \trtikz{$C_1\backslash  R_1$} {$R_1$} {1}  {$C_2\backslash R_2$}
      {$R_2$} {1} {2.3}

    \caption{Representation of  $\mathcal  T(C_1, C_2,  R)$
      with $R_1=R\cap C_1$ and $R_2=R\cap C_2$.}
      \label{fig:transformationDef}
  \end{figure}

\section{Dimension of $P_{n,K}$}
\label{sec:dimension}

Let  $\pi=\{C_1, C_2,$  $\hdots, C_K\}$  be a  $K-$partition.  For all
$i\in\s{1,\hdots,K}$,  the representative vertices  of clusters  $C_i$ is
referred  to as  $r_i$.  The  second lowest  vertex of  this  cluster is
denoted by $r'_i=$ $\min \s{j\in C_i\backslash\s{r_i}}$.

To   prove  that  $P_{n,K}$   is  full-dimensional   if  and   only  if
$K\in\{3,4,\hdots, $ $n-2\}$, we first assume that it is included in a
hyperplane $H=\{  x\in\mathbb R^{|E|+|V|-3}$ $| \;\al^T  x = \al_0\}$.
We then prove that
every $\alpha$ coefficient  is equal to zero. To this  end, we rely on
four lemmas (summed up in table~\ref{tab:lemmas}).

\begin{table}[h]

  \centering
  \begin{tabular}{cl@{\hspace{0.2cm}}c@{}c}\toprule
    \multirow{2}{*}{\textbf{Lemma}} & \multirow{2}{*}{\textbf{Conditions}} & \textbf{Valid} &
    \multirow{2}{*}{\textbf{Result}}\\
    & & \textbf{transformations} & \\\midrule

    \multirow{3}{*}{\ref{lemma1}} & $c_1\in C_1\backslash\s{r_1},$ & \multirow{3}{*}{%% Creator: Inkscape inkscape 0.48.3.1, www.inkscape.org
%% PDF/EPS/PS + LaTeX output extension by Johan Engelen, 2010
%% Accompanies image file '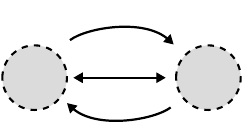' (pdf, eps, ps)
%%
%% To include the image in your LaTeX document, write
%%   \input{<filename>.pdf_tex}
%%  instead of
%%   \includegraphics{<filename>.pdf}
%% To scale the image, write
%%   \def\svgwidth{<desired width>}
%%   \input{<filename>.pdf_tex}
%%  instead of
%%   \includegraphics[width=<desired width>]{<filename>.pdf}
%%
%% Images with a different path to the parent latex file can
%% be accessed with the `import' package (which may need to be
%% installed) using
%%   \usepackage{import}
%% in the preamble, and then including the image with
%%   \import{<path to file>}{<filename>.pdf_tex}
%% Alternatively, one can specify
%%   \graphicspath{{<path to file>/}}
%% 
%% For more information, please see info/svg-inkscape on CTAN:
%%   http://tug.ctan.org/tex-archive/info/svg-inkscape
%%
\begingroup%
  \makeatletter%
  \providecommand\color[2][]{%
    \errmessage{(Inkscape) Color is used for the text in Inkscape, but the package 'color.sty' is not loaded}%
    \renewcommand\color[2][]{}%
  }%
  \providecommand\transparent[1]{%
    \errmessage{(Inkscape) Transparency is used (non-zero) for the text in Inkscape, but the package 'transparent.sty' is not loaded}%
    \renewcommand\transparent[1]{}%
  }%
  \providecommand\rotatebox[2]{#2}%
  \ifx\svgwidth\undefined%
    \setlength{\unitlength}{70.42666016bp}%
    \ifx\svgscale\undefined%
      \relax%
    \else%
      \setlength{\unitlength}{\unitlength * \real{\svgscale}}%
    \fi%
  \else%
    \setlength{\unitlength}{\svgwidth}%
  \fi%
  \global\let\svgwidth\undefined%
  \global\let\svgscale\undefined%
  \makeatother%
  \begin{picture}(1,0.49879699)%
    \put(0,0){\includegraphics[width=\unitlength]{lemma1.pdf}}%
    \put(0.85633326,0.14062574){\color[rgb]{0,0,0}\makebox(0,0)[b]{\smash{$C_{2}$}}}%
    \put(0.14370002,0.14062574){\color[rgb]{0,0,0}\makebox(0,0)[b]{\smash{$C_{1}$}}}%
    \put(0.4888601,0.23646528){\color[rgb]{0,0,0}\makebox(0,0)[b]{\smash{$c_1,c_2$}}}%
    \put(0.49533535,0.4470144){\color[rgb]{0,0,0}\makebox(0,0)[b]{\smash{$c_1$}}}%
    \put(0.49533535,0.06688228){\color[rgb]{0,0,0}\makebox(0,0)[b]{\smash{$c_2$}}}%
  \end{picture}%
\endgroup%
}
      & \multirow{3}{*}{$\al_{c_1, c_2} = 0$}
      \\
      & $c_2\in C_2\backslash\s{r_2}$ & & 
      \\
      & & & 
      \\\midrule

      \multirow{3}{*}{\ref{lemma2} }             &              $r_1<r_2$             &
      \multirow{3}{*} {%% Creator: Inkscape inkscape 0.48.3.1, www.inkscape.org
%% PDF/EPS/PS + LaTeX output extension by Johan Engelen, 2010
%% Accompanies image file '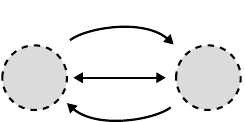' (pdf, eps, ps)
%%
%% To include the image in your LaTeX document, write
%%   \input{<filename>.pdf_tex}
%%  instead of
%%   \includegraphics{<filename>.pdf}
%% To scale the image, write
%%   \def\svgwidth{<desired width>}
%%   \input{<filename>.pdf_tex}
%%  instead of
%%   \includegraphics[width=<desired width>]{<filename>.pdf}
%%
%% Images with a different path to the parent latex file can
%% be accessed with the `import' package (which may need to be
%% installed) using
%%   \usepackage{import}
%% in the preamble, and then including the image with
%%   \import{<path to file>}{<filename>.pdf_tex}
%% Alternatively, one can specify
%%   \graphicspath{{<path to file>/}}
%% 
%% For more information, please see info/svg-inkscape on CTAN:
%%   http://tug.ctan.org/tex-archive/info/svg-inkscape
%%
\begingroup%
  \makeatletter%
  \providecommand\color[2][]{%
    \errmessage{(Inkscape) Color is used for the text in Inkscape, but the package 'color.sty' is not loaded}%
    \renewcommand\color[2][]{}%
  }%
  \providecommand\transparent[1]{%
    \errmessage{(Inkscape) Transparency is used (non-zero) for the text in Inkscape, but the package 'transparent.sty' is not loaded}%
    \renewcommand\transparent[1]{}%
  }%
  \providecommand\rotatebox[2]{#2}%
  \ifx\svgwidth\undefined%
    \setlength{\unitlength}{70.42666016bp}%
    \ifx\svgscale\undefined%
      \relax%
    \else%
      \setlength{\unitlength}{\unitlength * \real{\svgscale}}%
    \fi%
  \else%
    \setlength{\unitlength}{\svgwidth}%
  \fi%
  \global\let\svgwidth\undefined%
  \global\let\svgscale\undefined%
  \makeatother%
  \begin{picture}(1,0.49879699)%
    \put(0,0){\includegraphics[width=\unitlength]{lemma2.pdf}}%
    \put(0.85633326,0.14062574){\color[rgb]{0,0,0}\makebox(0,0)[b]{\smash{$C_{2}$}}}%
    \put(0.14370002,0.14062574){\color[rgb]{0,0,0}\makebox(0,0)[b]{\smash{$C_{1}$}}}%
    \put(0.4888601,0.23646528){\color[rgb]{0,0,0}\makebox(0,0)[b]{\smash{$r_1,c_2$}}}%
    \put(0.49533535,0.4470144){\color[rgb]{0,0,0}\makebox(0,0)[b]{\smash{$r_1$}}}%
    \put(0.49533535,0.06688228){\color[rgb]{0,0,0}\makebox(0,0)[b]{\smash{$c_2$}}}%
  \end{picture}%
\endgroup%
} &
      \multirow{3}{*}{$2\al_{r_1,c_2}+\al_{r'_1}=\al_{\min(r'_1,c_2)}$ }
      \\
      & $c_2\in C_2\backslash\s{r_2}$ 
      \\
      \\\midrule

      \multirow{3}{*}{\ref{lemma4} }             &              $r_1<r_2$             &
      \multirow{3}{*} {%% Creator: Inkscape inkscape 0.48.3.1, www.inkscape.org
%% PDF/EPS/PS + LaTeX output extension by Johan Engelen, 2010
%% Accompanies image file '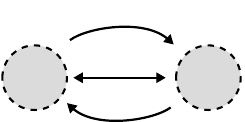' (pdf, eps, ps)
%%
%% To include the image in your LaTeX document, write
%%   \input{<filename>.pdf_tex}
%%  instead of
%%   \includegraphics{<filename>.pdf}
%% To scale the image, write
%%   \def\svgwidth{<desired width>}
%%   \input{<filename>.pdf_tex}
%%  instead of
%%   \includegraphics[width=<desired width>]{<filename>.pdf}
%%
%% Images with a different path to the parent latex file can
%% be accessed with the `import' package (which may need to be
%% installed) using
%%   \usepackage{import}
%% in the preamble, and then including the image with
%%   \import{<path to file>}{<filename>.pdf_tex}
%% Alternatively, one can specify
%%   \graphicspath{{<path to file>/}}
%% 
%% For more information, please see info/svg-inkscape on CTAN:
%%   http://tug.ctan.org/tex-archive/info/svg-inkscape
%%
\begingroup%
  \makeatletter%
  \providecommand\color[2][]{%
    \errmessage{(Inkscape) Color is used for the text in Inkscape, but the package 'color.sty' is not loaded}%
    \renewcommand\color[2][]{}%
  }%
  \providecommand\transparent[1]{%
    \errmessage{(Inkscape) Transparency is used (non-zero) for the text in Inkscape, but the package 'transparent.sty' is not loaded}%
    \renewcommand\transparent[1]{}%
  }%
  \providecommand\rotatebox[2]{#2}%
  \ifx\svgwidth\undefined%
    \setlength{\unitlength}{70.42666016bp}%
    \ifx\svgscale\undefined%
      \relax%
    \else%
      \setlength{\unitlength}{\unitlength * \real{\svgscale}}%
    \fi%
  \else%
    \setlength{\unitlength}{\svgwidth}%
  \fi%
  \global\let\svgwidth\undefined%
  \global\let\svgscale\undefined%
  \makeatother%
  \begin{picture}(1,0.49879699)%
    \put(0,0){\includegraphics[width=\unitlength]{lemma4.pdf}}%
    \put(0.85633326,0.14062574){\color[rgb]{0,0,0}\makebox(0,0)[b]{\smash{$C_{2}$}}}%
    \put(0.14370002,0.14062574){\color[rgb]{0,0,0}\makebox(0,0)[b]{\smash{$C_{1}$}}}%
    \put(0.4888601,0.23646528){\color[rgb]{0,0,0}\makebox(0,0)[b]{\smash{$r_1,r_2$}}}%
    \put(0.49533535,0.4470144){\color[rgb]{0,0,0}\makebox(0,0)[b]{\smash{$r_1$}}}%
    \put(0.49533535,0.06688228){\color[rgb]{0,0,0}\makebox(0,0)[b]{\smash{$r_2$}}}%
  \end{picture}%
\endgroup%
} &
      \multirow{3}{*}{$2\al_{r_1,r_2} + \al_{r'_1}+\al_{r'_2}=2\al_{r_2}$ }
      \\
      & $r_2 < r'_1$ 
      \\
      & $|C_2|\geq 2$\\\midrule

      \multirow{3}{*}{\ref{lemma5} }             &           $i\in C_1\backslash\s{r_1}$              &
      \multirow{3}{*} {%% Creator: Inkscape inkscape 0.48.3.1, www.inkscape.org
%% PDF/EPS/PS + LaTeX output extension by Johan Engelen, 2010
%% Accompanies image file '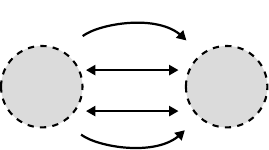' (pdf, eps, ps)
%%
%% To include the image in your LaTeX document, write
%%   \input{<filename>.pdf_tex}
%%  instead of
%%   \includegraphics{<filename>.pdf}
%% To scale the image, write
%%   \def\svgwidth{<desired width>}
%%   \input{<filename>.pdf_tex}
%%  instead of
%%   \includegraphics[width=<desired width>]{<filename>.pdf}
%%
%% Images with a different path to the parent latex file can
%% be accessed with the `import' package (which may need to be
%% installed) using
%%   \usepackage{import}
%% in the preamble, and then including the image with
%%   \import{<path to file>}{<filename>.pdf_tex}
%% Alternatively, one can specify
%%   \graphicspath{{<path to file>/}}
%% 
%% For more information, please see info/svg-inkscape on CTAN:
%%   http://tug.ctan.org/tex-archive/info/svg-inkscape
%%
\begingroup%
  \makeatletter%
  \providecommand\color[2][]{%
    \errmessage{(Inkscape) Color is used for the text in Inkscape, but the package 'color.sty' is not loaded}%
    \renewcommand\color[2][]{}%
  }%
  \providecommand\transparent[1]{%
    \errmessage{(Inkscape) Transparency is used (non-zero) for the text in Inkscape, but the package 'transparent.sty' is not loaded}%
    \renewcommand\transparent[1]{}%
  }%
  \providecommand\rotatebox[2]{#2}%
  \ifx\svgwidth\undefined%
    \setlength{\unitlength}{77.30767211bp}%
    \ifx\svgscale\undefined%
      \relax%
    \else%
      \setlength{\unitlength}{\unitlength * \real{\svgscale}}%
    \fi%
  \else%
    \setlength{\unitlength}{\svgwidth}%
  \fi%
  \global\let\svgwidth\undefined%
  \global\let\svgscale\undefined%
  \makeatother%
  \begin{picture}(1,0.55561893)%
    \put(0,0){\includegraphics[width=\unitlength]{lemma5.pdf}}%
    \put(0.84808949,0.19569599){\color[rgb]{0,0,0}\makebox(0,0)[b]{\smash{$C_{2}$}}}%
    \put(0.15749337,0.19569599){\color[rgb]{0,0,0}\makebox(0,0)[b]{\smash{$C_{1}$}}}%
    \put(0.49262792,0.34509459){\color[rgb]{0,0,0}\makebox(0,0)[b]{\smash{$r_1,r_2$}}}%
    \put(0.49852682,0.50844541){\color[rgb]{0,0,0}\makebox(0,0)[b]{\smash{$r_1$}}}%
    \put(0.49852682,0.02540317){\color[rgb]{0,0,0}\makebox(0,0)[b]{\smash{$i$}}}%
    \put(0.49262792,0.17393426){\color[rgb]{0,0,0}\makebox(0,0)[b]{\smash{$i,r_2$}}}%
  \end{picture}%
\endgroup%
} &
      \multirow{3}{*}{$\al_{r_1,r_2}=\al_{r_2,i}$ }
      \\
      & $i<r_2$
      \\
      &
      \\

    \bottomrule
  \end{tabular}
  \caption{Summary   of   the   four   lemmas.   Given   a   $K-$partition
    $\pi=\s{C_1,\hdots,C_K}$ whose vector $x^\pi$ is included in a hyperplane $H=\{ x\in\mathbb R^{|E|+|V|-3}$ $| \;\al^T x =
\al_0\}$, each arrow of the third column
    corresponds to a transformation on $C_1$ and $C_2$ which has to give a
     $K-$partition included in $H$.}
  \label{tab:lemmas}
\end{table}

\begin{lemma}
\label{lemma1}Consider the four following $K$-partitions :
  \begin{enumerate}[(i)]
  \item $\pi =  \{C_1, C_2, C_3, \hdots, C_K\}$  with $c_1\in C_1\backslash\s{r_1}$ and
    $c_2\in C_2\backslash\s{r_2}$;
  \item  $\pi^1 =  \{C^{(1)}_1,  C^{(1)}_2, C_3,  \hdots, C_K\}$  with
    $\{C^{(1)}_1, C^{(1)}_2\} = \mathcal T(C_1, C_2, \s{c_1})$;
  \item  $\pi^2 =  \{C^{(2)}_1,  C^{(2)}_2, C_3,  \hdots, C_K\}$  with
    $\{C^{(2)}_1, C^{(2)}_2\} =\mathcal T(C_1, C_2, \s{c_2})$;
  \item  $\pi^3 =  \{C^{(3)}_1,  C^{(3)}_2, C_3,  \hdots, C_K\}$  with
    $\{C^{(3)}_1, C^{(3)}_2\} =\mathcal T(C_1, C_2, \{c_1, c_2\})$.
  \end{enumerate}
  If $x^{\pi}$, $x^{\pi^1}$, $x^{\pi^2}$, $x^{\pi^3}$ satisfy
  $\al^Tx=\al_0$  then $\al_{c_1,c_2} = 0$.
\end{lemma}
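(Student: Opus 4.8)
\emph{Proof plan.} The plan is to evaluate the affine form $\al^Tx$ on the four characteristic vectors and to combine the four equations $\al^Tx^{\pi}=\al^Tx^{\pi^1}=\al^Tx^{\pi^2}=\al^Tx^{\pi^3}=\al_0$ so that everything cancels except a nonzero multiple of $\al_{c_1,c_2}$. Concretely I will use the combination $\al^T(x^{\pi^1}-x^{\pi}+x^{\pi^2}-x^{\pi^3})$, which equals $(1-1+1-1)\al_0=0$, and show that the vector $x^{\pi^1}-x^{\pi}+x^{\pi^2}-x^{\pi^3}$ has a single nonzero coordinate, equal to $2$, namely the one indexed by the edge $c_1c_2$; this forces $2\al_{c_1,c_2}=0$.

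For the bookkeeping I would set $A=C_1\backslash\s{c_1}$ and $B=C_2\backslash\s{c_2}$, so that, among the first two clusters, $\pi$ has $A\cup\s{c_1}$ and $B\cup\s{c_2}$, $\pi^1$ has $A$ and $B\cup\s{c_1,c_2}$, $\pi^2$ has $A\cup\s{c_1,c_2}$ and $B$, and $\pi^3$ has $A\cup\s{c_2}$ and $B\cup\s{c_1}$, the clusters $C_3,\hdots,C_K$ being identical throughout. Since $r_1\in A$ and $r_2\in B$, no cluster is empty, so all four are legitimate $K$-partitions. The only coordinates of the characteristic vector that can vary are the edges incident to $c_1$, the edges incident to $c_2$, and the representative variables of the first two clusters.

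The cancellation is then a routine case check. For $a\in A$ the coordinate $x_{c_1,a}$ takes the values $1,0,1,0$ along $\pi,\pi^1,\pi^2,\pi^3$; similarly $x_{c_1,b}$ for $b\in B$ reads $0,1,0,1$, $x_{c_2,a}$ for $a\in A$ reads $0,0,1,1$, and $x_{c_2,b}$ for $b\in B$ reads $1,1,0,0$; in each of these cases the alternating combination is $0$. The coordinate $x_{c_1,c_2}$ reads $0,1,1,0$, contributing $2$. For the representatives, since $c_1>r_1$ and $c_2>r_2$, the representative of the ``$A$-side'' cluster is $r_1$ in $\pi$ and $\pi^1$ and $\min(r_1,c_2)$ in $\pi^2$ and $\pi^3$, and the representative of the ``$B$-side'' cluster is $r_2$ in $\pi$ and $\pi^2$ and $\min(r_2,c_1)$ in $\pi^1$ and $\pi^3$; in both cases the corresponding contribution to the combination vanishes. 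Summing up, the combination equals $2\al_{c_1,c_2}$, which gives $\al_{c_1,c_2}=0$.

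I do not anticipate any genuine difficulty: the argument is essentially bookkeeping. The points that need attention are checking that each of $\pi^1,\pi^2,\pi^3$ is still a feasible $K$-partition (no empty cluster, exactly $K$ parts, which is why we need $c_1\neq r_1$ and $c_2\neq r_2$), and making sure the representative contributions really cancel regardless of the relative order of $r_1,r_2,c_1,c_2$ — here one uses the convention $\al_1=\al_2=\al_3=0$ so the formulas stay uniform when a representative is one of the vertices $1,2,3$. If one prefers to avoid the explicit table of edge values, one can instead observe that the operations ``move $c_1$ to the other cluster'' and ``move $c_2$ to the other cluster'' change disjoint sets of edges apart from the single edge $c_1c_2$, which already pins down the support of $x^{\pi^1}+x^{\pi^2}-x^{\pi}-x^{\pi^3}$.
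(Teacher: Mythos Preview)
Your proof is correct and follows essentially the same approach as the paper: both compute $\al^T$ on the four characteristic vectors and take the linear combination corresponding to $(x^{\pi^1}-x^{\pi})+(x^{\pi^2}-x^{\pi})-(x^{\pi^3}-x^{\pi})$ to isolate $2\al_{c_1,c_2}$. The paper writes out the three pairwise differences as separate equations (its equations (8), (9), (10)) and then combines them, whereas you form the combination directly and check coordinate-by-coordinate; the algebraic content is identical.
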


% \begin{lemma}
% \label{lemma1}
% Consider a  $K-$partition $\pi=(C_1,  C_2, C_3, \hdots,  C_K)$ such
% that $|C_1| \geq 2$ and  $|C_2|\geq 2$. We assume that $x^\pi$ is
% included in a hyperplane $H=\{ x\in\mathbb R^{|E|+|V|-3} | \;\al^T x =
% \al_0\}$.   Let $c_1$  and $c_2$  be two  non representative  vertices of
% $C_1$ and $C_2$ respectively (\textit{i.e.}: $c_1\in C_1\backslash \s{r_1}$
% and $c_2\in C_2\backslash \s{r_2}$).  Then, $\al_{c_1, c_2}=0$,
% provided that the three following $K-$partitions are also included in $H$:
%   \begin{enumerate}[(i)]
%   \item  $\pi^1 =  \{C^{(1)}_1,  C^{(1)}_2, C_3,  \hdots, C_K\}$  with
%     $\{C^{(1)}_1, C^{(1)}_2\} = \mathcal T(C_1, C_2, \s{c_1})$;
%   \item  $\pi^2  = \{C^{(2)}_1,  C^{(2)}_2,  C_3,  \hdots, C_K\}$  with
%     $\{C^{(2)}_1, C^{(2)}_2\} =\mathcal T(C_1, C_2, \s{c_2})$;
%   \item  $\pi^3  = \{C^{(3)}_1,  C^{(3)}_2,  C_3,  \hdots, C_K\}$  with
%     $\{C^{(3)}_1, C^{(3)}_2\} =\mathcal T(C_1, C_2, \{c_1, c_2\})$.
%   \end{enumerate}
% \end{lemma}

\begin{figure}[h!]
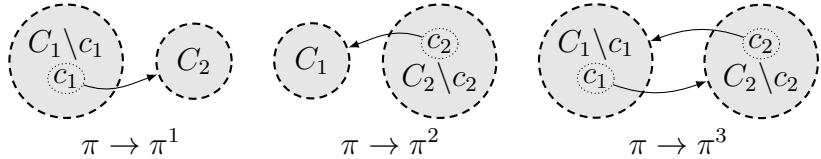


  \centering 
\begin{tabular}{c@{\hspace{0.4cm}}c@{\hspace{0.4cm}}c}
            \trtikz{$C_1\backslash  c_1$}{$c_1$}{1}{}{$C_2$}{}{1.7}  &
            \trtikz{}{$C_1$}{}{$C_2\backslash  c_2$}{$c_2$}{1}{1.7}  &
            \trtikz{$C_1\backslash       c_1$}{$c_1$}{1}{$C_2\backslash
              c_2$}{$c_2$}{1}{2.2}\\\rule[-7pt]{0pt}{20pt}
            $\pi\rightarrow \pi^1$ & 
            %$,\;     \tr{C_1}{C_2}{c_1}$     &
            $\pi\rightarrow \pi^2$ &
            %$,\;\tr{C_1}{C_2}{c_2}$ &
            $\pi\rightarrow \pi^3$
            %$,\;\tr{C_1}{C_2}{\{c_1,c_2\}}$
  \end{tabular}
  \caption{Representation of the transformations which lead from $\pi$ to $\pi^1$, $\pi^2$ and $\pi^3$.}
\label{fig:transformations}
\end{figure}

\begin{proof}
Let $m_2$ denote $min(r_2, c_1)$.  Using the fact that $\al^T
x^{\pi}$ and $\al^Tx^{\pi^1}$ are both equal to $\al_0$, we obtain

\begin{multline}
  \zsum_{i=1}^K(\al(C_i) +\al_{r_i}) = \zsum_{i=3}^K (\al(C_i)
  +\al_{r_i})\\+ \al(C_1\backslash\s{c_1}) + \al(C_2\cup \{c_1\})
  + \al_{r_1} + \al_{m_2}.
\end{multline}

This can be simplified and reformulated as
\begin{equation}
  \label{eq:l1e1}
  \al(\s{c_1}, C_1) - \al(\s{c_1}, C_2\backslash\s{c_2}) =\\ \al_{c_1,c_2} + \al_{m_2} - \al_{r_2}.
\end{equation}

Let $m_1$ correspond to $min(r_1, c_2)$. Similarly we obtain from $\pi^2$

\begin{equation}
  \label{eq:l1e2}
 \al(\s{c_2}, C_2) - \al(\s{c_2},C_1\backslash\s{c_1})= \al_{c_1,c_2} + \al_{m_1} - \al_{r_1}.
\end{equation}

Finally, $\pi^3$ leads to
\begin{multline}
  \label{eq:l1e3}
  \al(\s{c_1}, C_1) +\al(\s{c_2}, C_2) + \al_{r_1} + \al_{r_2}.
  = \\\al(\s{c_2}, C_1\backslash\s{c_1}) + \al(\s{c_1}, C_2\backslash\s{c_2}) + \al_{m_1} + \al_{m_2} 
\end{multline}

From (\ref{eq:l1e1}), (\ref{eq:l1e2}) and (\ref{eq:l1e3}) we obtain $\al_{c_1 c_2}
= 0$.
\end{proof}

\begin{lemma}
\label{lemma2}Consider the four following $K$-partitions :
  \begin{enumerate}[(i)]
  \item $\pi = \{C_1, C_2, C_3, \hdots, C_K\}$ with $r_1<r_2$, 
    $c_2\in C_2\backslash\s{r_2}$ and $|C_1|\geq 2$;
  \item  $\pi^1 =  \{C^{(1)}_1,  C^{(1)}_2, C_3,  \hdots, C_K\}$  with
    $\{C^{(1)}_1, C^{(1)}_2\} = \mathcal T(C_1, C_2, \s{r_1})$;
  \item  $\pi^2 =  \{C^{(2)}_1,  C^{(2)}_2, C_3,  \hdots, C_K\}$  with
    $\{C^{(2)}_1, C^{(2)}_2\} =\mathcal T(C_1, C_2, \s{c_2})$;
  \item  $\pi^3 =  \{C^{(3)}_1,  C^{(3)}_2, C_3,  \hdots, C_K\}$  with
    $\{C^{(3)}_1, C^{(3)}_2\} =\mathcal T(C_1, C_2, \{r_1, c_2\})$.
  \end{enumerate}
  If $x^{\pi}$, $x^{\pi^1}$, $x^{\pi^2}$, $x^{\pi^3}$ satisfy
  $\al^Tx=\al_0$  then $2\al_{r_1, c_2}+ \al_{r'_1}=\al_{\min(r'_1,c_2)}$.
\end{lemma}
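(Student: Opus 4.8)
The plan is to reproduce, step by step, the argument used for Lemma~\ref{lemma1}. For each $j\in\s{1,2,3}$ I would write $\al^Tx^{\pi}=\al^Tx^{\pi^j}=\al_0$, cancel the contributions of the untouched clusters $C_3,\hdots,C_K$ as well as the intra-cluster terms that remain unchanged, and thereby isolate one scalar identity among the $\al$-coefficients; combining the three identities should give the claimed relation, the term $\al_{\min(r'_1,c_2)}$ arising from the third transformation.

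First I would analyse $\pi\to\pi^1$, where $r_1$ leaves $C_1$ for $C_2$. The point is that, because $|C_1|\geq 2$, the cluster $C_1\backslash\s{r_1}$ is nonempty with representative $r'_1$, and because $r_1<r_2$ the representative of $C_2\cup\s{r_1}$ becomes $r_1$. Using $\al(C_1\backslash\s{r_1})=\al(C_1)-\al(\s{r_1},C_1)$ and $\al(C_2\cup\s{r_1})=\al(C_2)+\al(\s{r_1},C_2)$, cancelling $\al(C_1)+\al(C_2)+\al_{r_1}$ from both sides, and splitting off the edge $r_1c_2$ via $\al(\s{r_1},C_2)=\al(\s{r_1},C_2\backslash\s{c_2})+\al_{r_1,c_2}$, I expect to reach
\[\al(\s{r_1},C_1)-\al(\s{r_1},C_2\backslash\s{c_2})=\al_{r_1,c_2}+\al_{r'_1}-\al_{r_2}.\]
For $\pi\to\pi^2$, where $c_2$ moves from $C_2$ to $C_1$, the inequalities $r_1<r_2<c_2$ ensure that neither representative changes, and the same bookkeeping (splitting off $r_1$ on the $C_1$-side this time) should give
\[\al(\s{c_2},C_2)-\al(\s{c_2},C_1\backslash\s{r_1})=\al_{r_1,c_2}.\]
For $\pi\to\pi^3$, where $\s{r_1,c_2}$ is moved, $C_1$ becomes $(C_1\backslash\s{r_1})\cup\s{c_2}$ with representative $\min(r'_1,c_2)$ and $C_2$ becomes $(C_2\backslash\s{c_2})\cup\s{r_1}$ with representative $r_1$; expanding the four intra-cluster sums exactly as above should yield
\[\al(\s{r_1},C_1)-\al(\s{r_1},C_2\backslash\s{c_2})+\al(\s{c_2},C_2)-\al(\s{c_2},C_1\backslash\s{r_1})=\al_{\min(r'_1,c_2)}-\al_{r_2}.\]

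The last step is to observe that the two differences forming the left-hand side of the $\pi^3$-identity are precisely the left-hand sides of the $\pi^1$- and $\pi^2$-identities; substituting gives $2\al_{r_1,c_2}+\al_{r'_1}-\al_{r_2}=\al_{\min(r'_1,c_2)}-\al_{r_2}$, hence $2\al_{r_1,c_2}+\al_{r'_1}=\al_{\min(r'_1,c_2)}$.

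As in Lemma~\ref{lemma1}, the part that needs care is not the algebra but correctly identifying the representative of each modified cluster: the hypotheses $r_1<r_2$ and $|C_1|\geq 2$ are exactly what make $r_1$ the representative of $C_2\cup\s{r_1}$ and $r'_1$ the representative of $C_1\backslash\s{r_1}$, and the asymmetric term $\al_{\min(r'_1,c_2)}$ is nothing but the representative coefficient of the new $C_1$ in $\pi^3$ — which in $\pi^1$ was the fixed $r'_1$ only because $c_2$ was not moved there. All the remaining manipulations are the same cancellations of edge-coefficient sums already performed for Lemma~\ref{lemma1}.
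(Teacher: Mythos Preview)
Your proof is correct and follows essentially the same approach as the paper: derive one scalar identity from each of the three transformations $\pi\to\pi^j$ by tracking the change in representative coefficients, then substitute the first two into the third. The only cosmetic difference is that you split off the edge term $\al_{r_1,c_2}$ already in the $\pi^1$- and $\pi^2$-identities, whereas the paper keeps the full sums $\al(\s{r_1},C_2)$ and $\al(\s{c_2},C_1)$ and performs the splitting during the final combination; the algebra is identical.
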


\begin{proof}

The transformation which leads from $\pi$ to $\pi^1$ is represented in
Figure~\ref{fig:l2f1}. Since $\al^Tx^\pi=\al^Tx^{\pi^1}$ we deduce:
  \begin{equation}
\al(\s{r_1}, C_1) +\al_{r_2}=\al(\s{r_1}, C_2) +\al_{r'_1}\label{eq:l2e1}.
\end{equation}

  In $\pi^1$, $r_2$ is  not a representative vertex since
  $r_1$ - which is lower - is in its cluster.  As a result $\al_{r_2}$
  only appears in the left part of the equation. At the opposite,
  $r'_1$,  is not  a  representative in  $\pi$,  but becomes  one 
  when $r_1$ is moved to $C_2$.  Therefore, $\al_{r'_1}$
  appears in the right part of the equation $\al^T x^\pi = \al^Tx^{\pi^1}$.
  \begin{figure}[h]
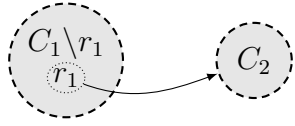

    \centering
    \trtikz{$C_1\backslash r_1$}{$r_1$}{1}{}{$C_2$}{}{2.5}
    \caption{Representation  of   the  transformation  which
      leads from $\pi$ to $\pi^1$ when $c_1=r_1$.}
    \label{fig:l2f1}
  \end{figure}

We deduce from  $\al^Tx^\pi=\al^Tx^{\pi^2}$:
\begin{equation}
  \label{eq:l2e2}
  \al(\s{c_2}, C_2) = \al(\s{c_2}, C_1).
\end{equation}

Furthermore, the fact that $\al^Tx^\pi$ is equal to $\al^Tx^{\pi^3}$ gives

\begin{multline}
  \al(\s{r_1},  C_1) + \al(\s{c_2},  C_2) +  \al_{r_2} =\\
  \al(\s{r_1},     C_2\backslash\s{c_2})     +     \al(\s{c_2},
  C_1\backslash r_1) + \al_{\min(r'_1,c_2)}.
\end{multline}

Finally, this result can be simplified to $\al_{r_1,c_2} + \al_{r_2}= \al_{\min(r'_1,c_2)}$ with
equations (\ref{eq:l2e1}) and (\ref{eq:l2e2}).
\end{proof}

\begin{lemma}
\label{lemma4}Consider the four following $K$-partitions :
  \begin{enumerate}[(i)]
  \item  $\pi = \{C_1,  C_2, C_3,  \hdots, C_K\}$  with $r_1<r_2<r'_1$
    and $|C_2|\geq 2$;
  \item  $\pi^1 =  \{C^{(1)}_1,  C^{(1)}_2, C_3,  \hdots, C_K\}$  with
    $\{C^{(1)}_1, C^{(1)}_2\} = \mathcal T(C_1, C_2, \s{r_1})$;
  \item  $\pi^2 =  \{C^{(2)}_1,  C^{(2)}_2, C_3,  \hdots, C_K\}$  with
    $\{C^{(2)}_1, C^{(2)}_2\} =\mathcal T(C_1, C_2, \s{r_2})$;
  \item  $\pi^3 =  \{C^{(3)}_1,  C^{(3)}_2, C_3,  \hdots, C_K\}$  with
    $\{C^{(3)}_1, C^{(3)}_2\} =\mathcal T(C_1, C_2, \{r_1, r_2\})$.
  \end{enumerate}
  If $x^{\pi}$, $x^{\pi^1}$, $x^{\pi^2}$, $x^{\pi^3}$ satisfy
  $\al^Tx=\al_0$  then $2\al_{r_1,        r_2}        +       \al_{r'_1}        +
  \al_{r'_2}=2\al_{r_2}$.
\end{lemma}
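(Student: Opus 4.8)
The plan is to reuse the template of the proofs of Lemmas~\ref{lemma1} and~\ref{lemma2}: for each transformation $\pi\to\pi^j$ I write the equality $\al^Tx^\pi=\al^Tx^{\pi^j}$, which holds because $x^\pi$ and all the $x^{\pi^j}$ lie in $H$; I cancel the contributions that are common to the two $K$-partitions; and then I combine the three resulting identities linearly so that every ``large'' edge sum disappears and only $\al_{r_1,r_2}$, $\al_{r'_1}$, $\al_{r'_2}$ and $\al_{r_2}$ remain. Note first that each of $\pi^1$, $\pi^2$, $\pi^3$ is a genuine $K$-partition: $C_1\backslash\s{r_1}\neq\emptyset$ since $r'_1$ exists, and $C_2\backslash\s{r_2}\neq\emptyset$ since $|C_2|\geq 2$, so no cluster becomes empty.

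The key preliminary step is to track how the representatives change, and this is exactly where the hypotheses $r_1<r_2<r'_1$ and $|C_2|\geq 2$ enter. In $\pi^1=\mathcal T(C_1,C_2,\s{r_1})$ the vertex $r_1$ joins $C_2$; since $r_1<r_2=\min C_2$ it becomes the representative of $C_2\cup\s{r_1}$, while $C_1\backslash\s{r_1}$ acquires the representative $r'_1$. In $\pi^2=\mathcal T(C_1,C_2,\s{r_2})$ the vertex $r_2$ joins $C_1$; because $r_1<r_2$ the representative of $C_1\cup\s{r_2}$ stays $r_1$, and $C_2\backslash\s{r_2}$ acquires the representative $r'_2$ (well defined since $|C_2|\geq 2$). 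In $\pi^3=\mathcal T(C_1,C_2,\s{r_1,r_2})$ the vertices $r_1$ and $r_2$ are exchanged; here $r_2<r'_1$ forces $r_2=\min\big((C_1\backslash\s{r_1})\cup\s{r_2}\big)$, so the two new clusters have representatives $r_2$ and $r_1$ and the multiset of representatives is left unchanged.

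Next I translate the three equalities. Since only the intra-cluster edges incident to $r_1$ or $r_2$ and the representative terms can change, and since the edge $r_1r_2$ is inactive before and after every transformation, I obtain
\[\al(\s{r_1},C_1)+\al_{r_2}=\al(\s{r_1},C_2)+\al_{r'_1}\]
from $\pi^1$ (this is precisely identity (\ref{eq:l2e1})),
\[\al(\s{r_2},C_2)+\al_{r_2}=\al(\s{r_2},C_1)+\al_{r'_2}\]
from $\pi^2$, and
\[\al(\s{r_1},C_1)+\al(\s{r_2},C_2)=\al(\s{r_1},C_2\backslash\s{r_2})+\al(\s{r_2},C_1\backslash\s{r_1})\]
from $\pi^3$, where the representative contributions cancel entirely. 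Splitting off the edge $r_1r_2$ through $\al(\s{r_1},C_2)=\al_{r_1,r_2}+\al(\s{r_1},C_2\backslash\s{r_2})$ and $\al(\s{r_2},C_1)=\al_{r_1,r_2}+\al(\s{r_2},C_1\backslash\s{r_1})$, I add the first two identities and then use the third one to eliminate the sum $\al(\s{r_1},C_2\backslash\s{r_2})+\al(\s{r_2},C_1\backslash\s{r_1})=\al(\s{r_1},C_1)+\al(\s{r_2},C_2)$. All the edge sums then cancel and what remains is $2\al_{r_2}=2\al_{r_1,r_2}+\al_{r'_1}+\al_{r'_2}$, the announced relation.

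I do not expect any real difficulty beyond care. The one delicate point is getting the new representatives right after each move — which is exactly what $r_1<r_2<r'_1$ and $|C_2|\geq 2$ are there to guarantee — together with the bookkeeping of the single edge $r_1r_2$, which is contained in $\al(\s{r_1},C_2)$ and in $\al(\s{r_2},C_1)$ but not in the sums over $C_2\backslash\s{r_2}$ and $C_1\backslash\s{r_1}$ used in the $\pi^3$ identity; once these are handled, the final linear combination is purely mechanical.
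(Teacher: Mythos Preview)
Your proposal is correct and follows essentially the same approach as the paper: you derive the same three identities from $\pi\to\pi^1$, $\pi\to\pi^2$, $\pi\to\pi^3$ (your first is exactly the paper's (\ref{eq:l2e1}), your second is the paper's (\ref{eq:l4e1}) with its obvious typo fixed), and combine them in the same way. Your discussion of how the representatives change under each transformation, and of the isolation of the $\al_{r_1,r_2}$ term, is in fact more explicit than the paper's.
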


\begin{proof}
  The fact that $\al^Tx^\pi$ is equal to $\al^Tx^{\pi^1}$ leads again to
  equation~(\ref{eq:l2e1}).

  From $\al^Tx^\pi=\al^Tx^{\pi^2}$ we deduce:
\begin{equation}
  \label{eq:l4e1}
  \al(\s{r_2}, C_2) +r_2= \al(\s{r_2}, C_1)+r'_2.
\end{equation}

The vertex $r_2$ is no more a representative in $\pi^1$ since it is
in the same cluster than $r_1$. $r'_2$ becomes a
representative since $r_2$ is not in its cluster anymore. 

Finally,  $\al^Tx^\pi=\al^Tx^{\pi^3}$ gives
\begin{multline}
\al(\s{r_1},      C_1)     +\al(\s{r_2},      C_2)     =
\al(\s{r_1}, C_2\backslash \s{r_2}) +
\al(\s{r_2}, C_1\backslash \s{r_1}).
\end{multline}

This  last  relation  can  be  simplified  to  $2\al_{r_1,  r_2}  +
\al_{r'_1} + \al_{r'_2}=2\al_{r_2}$ via equations~(\ref{eq:l2e1}) and~(\ref{eq:l4e1}).
\end{proof}

\begin{lemma}
\label{lemma5}Consider the five following $K$-partitions :
  \begin{enumerate}[(i)]
  \item   $\pi  =  \{C_1,   C_2,  C_3,   \hdots,  C_K\}$   with  $i\in
    C_1\backslash\s{r_1}$, $i<r_2$;
  \item  $\pi^1 =  \{C^{(1)}_1,  C^{(1)}_2, C_3,  \hdots, C_K\}$  with
    $\{C^{(1)}_1, C^{(1)}_2\} = \mathcal T(C_1, C_2, \s{r_1})$;
  \item  $\pi^2 =  \{C^{(2)}_1,  C^{(2)}_2, C_3,  \hdots, C_K\}$  with
    $\{C^{(2)}_1, C^{(2)}_2\} = \mathcal T(C_1, C_2, \s{i})$;
   \item  $\pi^3  = \{C^{(3)}_1,  C^{(3)}_2,  C_3,  \hdots, C_K\}$  with
    $\{C^{(3)}_1, C^{(3)}_2\} =\mathcal T(C_1, C_2, \{r_1,
    $ $r_2\})$;
  \item  $\pi^4  = \{C^{(4)}_1,  C^{(4)}_2,  C_3,  \hdots, C_K\}$  with
    $\{C^{(4)}_1, C^{(4)}_2\} =\mathcal T(C_1, C_2, \{i, r_2\})$.
  \end{enumerate}
  If $x^{\pi}$, $x^{\pi^1}$, $x^{\pi^2}$, $x^{\pi^3}$, $x^{\pi^4}$ satisfy
  $\al^Tx=\al_0$  then $\al_{r_1,  r_2}   =  \al_{r_2,  i}$.
\end{lemma}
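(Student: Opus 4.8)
The plan is to reuse the mechanism of Lemmas~\ref{lemma1}--\ref{lemma4}: for each $j\in\s{1,2,3,4}$ I would write the equality $\al^Tx^\pi=\al^Tx^{\pi^j}$, using that the only contributions to $\al^Tx$ are the intra-cluster $\al$-components together with the $\al$-components of the representatives, cancel the untouched clusters $C_3,\dots,C_K$, and recall the convention $\al_1=\al_2=\al_3=0$. Observe first that the hypotheses already pin down the ordering of the relevant vertices: $i\in C_1\backslash\s{r_1}$ forces $r_1<i$, so together with $i<r_2$ one gets $r_1<i<r_2$; in particular the second smallest vertex $r'_1$ of $C_1$ satisfies $r'_1\le i<r_2$.

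First I would treat the two single-vertex moves. The transformation $\pi\to\pi^1=\mathcal T(C_1,C_2,\s{r_1})$ is exactly the one appearing in Lemmas~\ref{lemma2} and~\ref{lemma4}, so after the usual simplification it yields again equation~(\ref{eq:l2e1}), namely $\al(\s{r_1},C_1)+\al_{r_2}=\al(\s{r_1},C_2)+\al_{r'_1}$. For $\pi\to\pi^2=\mathcal T(C_1,C_2,\s{i})$ the vertex $i$ leaves $C_1$ (whose representative stays $r_1$ since $i\neq r_1$) and joins $C_2$, whose representative becomes $\min(i,r_2)=i$; cancelling the common terms in $\al^Tx^\pi=\al^Tx^{\pi^2}$ gives the companion relation $\al(\s{i},C_1)+\al_{r_2}=\al(\s{i},C_2)+\al_i$.

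Then I would treat the two double moves. For $\pi^3=\mathcal T(C_1,C_2,\s{r_1,r_2})$ the new clusters are $(C_1\backslash\s{r_1})\cup\s{r_2}$, with representative $r'_1$ (using $r'_1<r_2$), and $(C_2\backslash\s{r_2})\cup\s{r_1}$, with representative $r_1$. Expanding $\al^Tx^\pi=\al^Tx^{\pi^3}$ and splitting each intra-cluster sum along the moved vertex, for instance $\al((C_1\backslash\s{r_1})\cup\s{r_2})=\al(C_1)-\al(\s{r_1},C_1)+\al(\s{r_2},C_1)-\al_{r_1,r_2}$, and then substituting equation~(\ref{eq:l2e1}) so that all terms involving $r_1$ and $r'_1$ cancel, everything collapses to $2\al_{r_1,r_2}=\al(\s{r_2},C_1)-\al(\s{r_2},C_2)$. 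Running the identical computation for $\pi^4=\mathcal T(C_1,C_2,\s{i,r_2})$ --- whose clusters are $(C_1\backslash\s{i})\cup\s{r_2}$, still with representative $r_1$, and $(C_2\backslash\s{r_2})\cup\s{i}$, with representative $i$ since $i<r_2$ --- while using the companion relation for $\pi^2$ in place of~(\ref{eq:l2e1}), gives $2\al_{i,r_2}=\al(\s{r_2},C_1)-\al(\s{r_2},C_2)$. Comparing the last two identities yields $\al_{r_1,r_2}=\al_{i,r_2}=\al_{r_2,i}$, which is the claim.

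The only delicate point, as in the previous lemmas, is the bookkeeping: one must correctly identify which vertex is the representative of each cluster after every transformation (this is precisely where $r_1<i<r_2$ and $r'_1<r_2$ are used), and carefully expand the intra-cluster $\al$-sums over the added or removed vertices. Once the four equalities are written out, eliminating the two single-move relations and noticing that $\pi^3$ and $\pi^4$ both reduce to the same right-hand side $\al(\s{r_2},C_1)-\al(\s{r_2},C_2)$ is routine.
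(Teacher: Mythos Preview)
Your proposal is correct and follows essentially the same route as the paper: you write the four equalities $\al^Tx^\pi=\al^Tx^{\pi^j}$, pair $\pi^1$ with $\pi^3$ and $\pi^2$ with $\pi^4$, and observe that both pairs reduce to the same quantity $\al(\s{r_2},C_1)-\al(\s{r_2},C_2)$. The paper records the intermediate step in the equivalent form $\al(\s{r_2},C_2)+\al_{r_1,r_2}=\al(\s{r_2},C_1\backslash\s{r_1})$ (and the analogue with $i$), but this is the same identity; your explicit remark that $r'_1\le i<r_2$ is exactly the representative bookkeeping the paper uses implicitly.
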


% \begin{lemma}
%   Consider a $K-$partition $\pi=(C_1,  C_2, C_3, \hdots, C_K)$ such
%   that $|C_1| \geq 2$ and $\exists i\in
%   C_1 :\,$ $r_1<i<r_2$. We
%   assume   that    $x^\pi$   is   included    in   a   hyperplane
%   $H=\{  x\in\mathbb  R^{|E|+|V|-3}  |  \;\al^T x  =  \al_0\}$.   Then,
%   ,   provided  that  the
%   four following $K-$partitions are also included in
%   $H$:

%   \begin{enumerate}[(i)].
%   \end{enumerate}
% \label{lemma5}  
% \end{lemma}

\begin{figure}[h]
  \begin{center}
    \begin{tabular}{c*{1}{@{\hspace{1.2cm}}c}}
      \trtikz{$C_1\backslash r_1$}{$r_1$}{1}{}{$C_2$}{}{2.3} &
      \trtikz{$C_1\backslash i$}{$i$}{1}{}{$C_2$}{}{1.7} 
      \\\rule[-7pt]{0pt}{18pt}

      $\tr{C_1}{C_2}{r_1}$       &       $\tr{C_1}{C_2}{i}$      
   \\\rule[-7pt]{0pt}{18pt}
   \\

\trtikz{$C_1\backslash      r_1$}{$r_1$}{1}{$C_2\backslash
        r_2$}{$r_2$}{1}{3} & 
      \trtikz{$C_1\backslash                 i$}{$i$}{1}{$C_2\backslash
        r_2$}{$r_2$}{1}{2.4} \\\rule[-7pt]{0pt}{18pt}

      $\tr{C_1}{C_2}{\s{r_1,r_2}}$ & $\tr{C_1}{C_2}{\{i,r_2\}}$ 
    \end{tabular}
  \caption{Representation  of  the  transformations which  lead  from
    $\pi$ to $\pi^1_{r_1}$, $\pi^1_{i}$, $\pi^3$ and $\pi^4$.}
\label{fig:le5}
  \end{center}
\end{figure}

\begin{proof}
  As shown in the  previous proofs, we respectively obtain from
  $\al^Tx^\pi=\al^Tx^{\pi^1_{r_1}}$                    and
  $\al^Tx^\pi=\al^Tx^{\pi^1_i}$       equations~(\ref{eq:l2e1})
  and~(\ref{eq:l1e1}) (with $ c_2 = r_2$ in this last case).

  From $\al^Tx^\pi=\al^Tx^{\pi^3}$ (represented Figure~\ref{fig:le5}) we deduce:
  \begin{multline}
\al(\s{r_1},    C_1)+\al(\s{r_2},    C_2)+\al_{r_2}=    \al(\s{r_1},
C_2\backslash \s{r_2})\\+\al(\s{r_2}, C_1\backslash\s{r_1})+\al_{r'_1}.\label{eq:l5e1}
\end{multline}

The set $C_1^{(3)}$ is equal to $\{C_1\backslash c_1\}\cup \{r_2\}$.  Since
$r'_1$ is lower than $r_2$, it is $C_1^{(3)}$ representative.  

The equality $\al^Tx^\pi=\al^Tx^{\pi^4}$ shows that
\begin{multline}
  \label{eq:l5e2}
  \al(\s{i},    C_1)+\al(\s{r_2},    C_2)+\al_{r_2}=    \al(\s{i},
C_2\backslash\s{r_2})\\+\al(r_2, C_1\backslash\s{i})+\al_i.
\end{multline}

From equations~(\ref{eq:l2e1}) and~(\ref{eq:l5e1}) we get:
\begin{equation}
  \al(\s{r_2}, C_2)+\al_{r_1, r_2}=\al(\s{r_2}, C_1\backslash\s{r_1}),
\end{equation}

and from equations~(\ref{eq:l1e1}) and~(\ref{eq:l5e2}) we obtain:
\begin{equation}
  \al(\s{r_2}, C_2)+\al_{i, r_2}=\al(\s{r_2}, C_1\backslash\s{i}).
\end{equation}

Finally, the two last equations yield
the expected result.

\end{proof}

%\ begin{corollary}
% The  condition  on  the  order  of two  vertices  in  lemmas~\ref{lemma1}
% to~\ref{lemma5} can be relaxed if they are both in $\s{1,2,3}$.\label{co:1}
% \end{corollary}

% \begin{proof}
% Let $u$ and $v$ be the two considered vertices.  The order of two vertices 
% is only relevant if they are in the same
%   cluster $C$, to determine the value of $x_i,\,\forall i\in C$. However,  the variables  $x_1$, $x_2$  and $x_3$
%   have been substituted from the formulation.  As a result, regardless
%   of the  order of  $u$ and $v$,  the representative variables  of the
%   vertices in the same cluster than $u$ or $v$ are always equal to zero.

%  %  Let  $C$  be  a  cluster  which  contains  $u$.  The  index  of  $C$
% %   representative vertex is lower than four.  Since  $x_1$, $x_2$ and $x_3$ are  substituted from  the  formulation, all the  representative variables in
% %   $C$  are equal  to  zero. The  same  applies for  any cluster  which
% %   contains $v$, a fortiori, if it also contains $u$. 
% % \bigskip

% % These statements
% %   are true regardless of the
% %   fact that  $u$ is greater than  $v$ or conversly.  The conditions on
% %   the order of  $u$ and $v$ of the five previous  lemmas can, thus, be
% %   relaxed without altering their result.

% \end{proof}

\begin{theorem}
Depending on $K$, the dimension of $P_{n,K}$ is:
  \begin{enumerate}[(i)]
  \item $dim(P_{n,2})=|E|
    +n-4$;
  \item $dim(P_{n,K})= |E|+n-3$,\; for $K\in\{3, 4, \hdots, n-2\}\;$(\textit{i.e.}: it is full
    dimensional);
  \item $dim(P_{n,n-1}) = |E|-1$.
  \end{enumerate}\label{th:dim}
\end{theorem}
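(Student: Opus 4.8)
The plan is to prove the three cases by sandwiching $\dim P_{n,K}$ between matching bounds. Since the ambient space is $\mathbb R^{|E|+n-3}$, establishing (ii) amounts to showing that the only hyperplane $H=\{x\mid\alpha^Tx=\alpha_0\}$ containing $P_{n,K}$ is improper (i.e.\ $\alpha=0$, which forces $\alpha_0=0$ as $P_{n,K}\neq\emptyset$), whereas for (i) and (iii) one must identify the affine hull of $P_{n,K}$ precisely.

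\emph{Case (ii), $3\le K\le n-2$.} Suppose $P_{n,K}\subseteq H$; since $\alpha_1=\alpha_2=\alpha_3=0$ by convention, the unknowns are the $\alpha_{i,j}$ ($ij\in E$) and the $\alpha_j$ ($4\le j\le n$), and it suffices to show they all vanish. First, for every edge $ij$ other than the edges $\{1,j\}$ and $\{2,3\}$ one can exhibit a feasible $K$-partition in which $i$ and $j$ lie in distinct clusters and neither is a representative (place $1$ with $i$, a suitably small vertex with $j$, and split the remaining vertices into singletons, which is possible precisely because $K\le n-2$); Lemma~\ref{lemma1} then gives $\alpha_{i,j}=0$. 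Next, Lemma~\ref{lemma2} applied with $\{1,3\}\subseteq C_1$ (so $r'_1=3$), $r_2=2$ and $c_2=j\ge4$ gives $2\alpha_{1,j}+\alpha_3=\alpha_3$, hence $\alpha_{1,j}=0$ for $j\ge4$; Lemma~\ref{lemma5} with $\{1,2\}\subseteq C_1$ and $\min C_2=3$ gives $\alpha_{1,3}=\alpha_{2,3}$; and Lemma~\ref{lemma2} again with $r_1=1$, $r_2=2$, $c_2=3$ and $r'_1=j\ge4$ gives $2\alpha_{1,3}+\alpha_j=\alpha_3=0$, so every $\alpha_j$ ($j\ge4$) equals the common value $\beta=-2\alpha_{1,3}$. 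Finally I would apply Lemma~\ref{lemma4} with $r_1=1$, $r_2=2$ twice: once with $C_1=\{1,r'_1\}$, $C_2=\{2,r'_2\}$ and $r'_1,r'_2\ge4$ — vertex $3$ being parked in one of the $K-2\ge1$ remaining clusters, which is where $K\ge3$ enters — giving $2\alpha_{1,2}+2\beta=0$; once with $3\in C_1$ (so $r'_1=3$) and $r'_2\ge4$, giving $2\alpha_{1,2}+\alpha_3+\beta=0$. Subtracting yields $\beta=0$, hence $\alpha_{1,2}=0$, then $\alpha_{1,3}=\alpha_{2,3}=0$; combined with the earlier steps $\alpha=0$, so $P_{n,K}$ is full-dimensional.

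\emph{Case (i), $K=2$.} For the upper bound, every characteristic vector of a $2$-partition satisfies $x_{1,2}+x_{1,3}+x_{2,3}-2\sum_{j=4}^{n}x_j=1$: if $\{1,2,3\}$ all lie in one cluster it is $C_1$, so $\sum_{j\ge4}x_j=1$ while the first three terms sum to $3$; otherwise $\{1,2,3\}$ is split into parts of sizes $2$ and $1$, the representative of $C_2$ lies in $\{2,3\}$ so $\sum_{j\ge4}x_j=0$ while the first three terms sum to $1$ — in both cases the left-hand side equals $1$. Hence $\dim P_{n,2}\le|E|+n-4$. For the matching lower bound I would rerun the argument of case (ii) with $K=2$: Lemmas~\ref{lemma1},~\ref{lemma2} and~\ref{lemma5} still apply (the required partitions remain feasible with only two clusters) and give $\alpha_{i,j}=0$ off $\{1,2\},\{1,3\},\{2,3\}$, then $\alpha_{1,3}=\alpha_{2,3}$ and $\alpha_j=-2\alpha_{1,3}$ for $j\ge4$; but now Lemma~\ref{lemma4} can only be instantiated with vertex $3$ inside $C_1$ or $C_2$ (there is no third cluster to hide it in), so it yields only the single relation $2\alpha_{1,2}+\beta=0$, i.e.\ $\alpha_{1,2}=\alpha_{1,3}$. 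Thus any hyperplane containing $P_{n,2}$ has $\alpha$ proportional to the coefficient vector of the equality above, so $\dim P_{n,2}\ge|E|+n-4$, and equality holds.

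\emph{Case (iii), $K=n-1$.} Such a partition consists of $n-2$ singletons and one $2$-element cluster $\{a,b\}$, so the characteristic vectors are in bijection with $E$ and $P_{n,n-1}$ has exactly $|E|$ vertices. Each of them satisfies $\sum_{ij\in E}x_{i,j}=1$ together with $x_j+\sum_{i<j}x_{i,j}=1$ for every $j\in\{4,\dots,n\}$ (vertex $j$ fails to be a representative exactly when it is the larger endpoint of the chosen pair); these $n-2$ equalities are linearly independent, so $\dim P_{n,n-1}\le|E|-1$. Conversely, in the edge coordinates these $|E|$ vectors are the standard basis vectors of $\mathbb R^{|E|}$, hence affinely independent, so $\dim P_{n,n-1}\ge|E|-1$ and equality follows. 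The main obstacle throughout is the bookkeeping of case (ii): for each target coefficient one must produce a $K$-partition, together with the companion partitions the relevant lemma requires, that is feasible (all $K$ clusters nonempty) and realizes the prescribed roles $r_1,r_2,r'_1,c_2,\dots$; this is exactly where $K\ge3$ (room to isolate vertex $3$) and $K\le n-2$ (room for the auxiliary singletons) are used, and tracking why these constructions degenerate at the extremes — Lemma~\ref{lemma4} losing one relation when $K=2$, and Lemmas~\ref{lemma1}–\ref{lemma2} becoming inapplicable when $K=n-1$ — is what accounts for the drop in dimension there.
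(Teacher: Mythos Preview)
Your proposal is correct and follows essentially the same route as the paper: use Lemmas~\ref{lemma1}, \ref{lemma2}, \ref{lemma5} to kill all $\alpha_{i,j}$ outside $\{12,13,23\}$ and to force $\alpha_{1,3}=\alpha_{2,3}$ and $\alpha_j=-2\alpha_{1,3}$ for $j\ge4$, then use Lemma~\ref{lemma4} once with $3\in C_1$ and once with $3$ parked in a third cluster to finish; the second instantiation is available precisely when $K\ge3$, which is exactly how the paper separates cases (i) and (ii). Your treatment of (iii) is the same affine-independence argument, written out a bit more explicitly than the paper's one-line version.
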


\begin{proof}
  $P_{n,n-1}$   contains  exactly   $|E|$   integer  solutions   which
  corresponds   to  all   the   partitions  with   exactly  one   edge
  activated.  These  solutions  are  independent and  thus  $dim(P_{n,
    n-1})= |E|-1$.

  We now consider  $K< n-1$. Assume that $P_{n,K}$ is
  included in $H=\{ x\in\mathbb  R^{|E|+|V|-3} |$ $\al^T x = \al_0\}$.
  We prove that  $H$ is unique if $K=2$ and  that all its coefficients
  are  equal to  0  if  $K\in\{3, 4,  \hdots,  n-2\}$.  Unless  stated
  otherwise, the $K-$partitions considered throughout the remainder of
  this proof only require that two of their clusters contain more than
  one  element.  They are  thus feasible  since $K$  is assumed  to be
  lower than $n-1$. \bigskip

    We first  apply Lemma~\ref{lemma1} with $r_1=1$, $r_2=2$
    and  $c_1, c_2\in\{3, 4,  \hdots, n\}$  to obtain  that $\al_{c_1,
      c_2}=0$.   Similarly,  if  $r_2=3$  and $c_1=2$,  we  get  that
    $\al_{2,c_2}=0$ for all $c_2\geq 4$.  \bigskip

    Furthermore,  if  $\{1,3\}\subset   C_1$,  $\{2\}\subset  C_2$  and
    $c_2\in\{4,  5,   \hdots,  n\}$,  Lemma~\ref{lemma2}   states:
    $\al_{1,c_2}=0$.  Up to this level, we know that $\al_{i,j}=0$ for
    all $ij\in E\backslash \{12, 13, 23\}$.  \bigskip

    Lemma~\ref{lemma5} for $r_1=1$, $i=2$ and $r_2=3$ shows that
    $\al_{1,3}$  and $\al_{2,3}$  are equal  to a  value that  will be
    referred to as  $\beta$.  \bigskip

    Moreover,   if    $r_1=1$,   $r_2=2$,   $c_2=3$   and
    $c_1\in\{4,  5, \hdots,  n\}$, Lemma~\ref{lemma2}  can be  used to
    highlight    that    $2\beta+\al_{c_1}=\al_3$.    As   
    previously stated, the variable $x_3$ has been removed from the
    formulation and its corresponding  coefficient $\al_3$ is equal to
    $0$.   We  obtain  that  for  all  $s\in\{4,  5,
    \hdots, n\}$, $\al_{s}=-2\beta$.  \bigskip

    We now use Lemma~\ref{lemma4} with $\{1, 3\}\subset
    C_1$, $r_2=2$  and $r'_2\in\{4, 5,  \hdots, n\}$ to  get:   $2\al_{1,2}+\al_{r'_2}=0$.   We   then
    conclude that $\al_{1,2}=\beta$.  \bigskip

    At this point,  we know that $\al_{1,2}=\al_{1,3}=\al_{1,3}=\beta$,
    that for all $s\in\{4, 5, \hdots, n\}$ $\al_s=-2\beta$ and that all the
    other coefficients are equal to $0$. Thus, the only hyperplane
    which can contain $P_{n,K}$        is
    \begin{equation}
    \beta(x_{1,2}+x_{1,3}+x_{2,3}-2\zsum_{s=4}^nx_s)=\al_0.\label{eq:dim22}
  \end{equation}

    % If $K=2$ It is easy to check, if $K=2$, that the equation
    % $x_{1,2}+x_{1,3}+x_{2,3}-2\zsum_{s=4}^nx_s=1$      is     always
    % satisfied.  Accordingly, we conclude that $dim(P(n, 2))=|E|+n-3-1$.

  Each cluster  which does not contain  the vertices $1$, $2$  or $3$ has
  exactly one vertex whose representative variable has value  one.
  Moreover, if  $K=2$, the three first  vertices can either  be linked or
  scattered in the two clusters. In both cases, it can be checked that
  equation~(\ref{eq:dim22})       is      always       satisfied      if
  $\al_0=\beta$.  As a  consequence, exactly  one hyperplane
  contains $P_{n,2}$. Its dimension is thus $|E|+n-4$.\bigskip

    If $K\in\{3, 4, \hdots, n-2\}$ Lemma~\ref{lemma4} is used with
    $r_1=1$, $r_2=2$ and $3\in C_3$ to show
    that $2\beta - 4\beta=0$. $\beta$ is, therefore, equal to $0$.  In
    the general case, we conclude that there  is no  hyperplane which
    contains $P_{n, K}$. Therefore, its dimension is maximal.

% Let  $H=\s{\al\in \mathbb R^{|V|+|E|-3}|  \al^T x  = \al_0}$  be a
%     hyperplane which  contain $P(n,n-1)$. Let $ab\in E$  with $a<b$ be
%     the  edge activated in  a $n-1$-partition  and let  $\beta$ denote
%     $\al_0 -\zsum_{i=4}^n\al_i$. If $b$ is greater than
%     $3$           we           have:          $\al_{a,b}+\zsum_{i=4}^n
%     \al_{i}-\al_b=\al_0$. Otherwise,  there is no  coefficient $\al_b$
%     and we have $\al_{a,b}+\zsum_{i=4}^n
%     \al_{i}-\al_b=\al_0$. From these two equations, we deduce that:

% \[\al_{a,b}=\left\{
%   \begin{array}{ll}
%     \beta, & \mbox{if }b\leq 3 \\
%     \beta -\al_b & \mbox{if }b> 3 
%   \end{array}\right..\]
% Therefore, every  hyperplane which contains $P(n, n-1)$  is defined by
% at most  $n-2$ independant coefficients  ($\beta$ and $\al_i,\,\forall
% i\in\s{4, \hdots, n}$).  As a result, $dim(P(n,n-1))\leq
%     \begin{pmatrix}
%       n\\2
%     \end{pmatrix}-1$.

\end{proof}

Thereafter, we study facet-defining  inequalities for $P_{n,K}$
when it is full-dimensional (\textit{i.e.:} $K\in\s{3,\hdots, n-2}$).
For  each studied  face $F=\s{x\in  P_{n,K}|\omega^T  x=\omega_0}$, we
consider  a facet-defining  inequality $\al^T  x\leq\al_0$ such  that
$F\subseteq \s{x\in P_{n,K}|\al^Tx = \al_0}$.  We then prove that $F$ is
facet-defining by highlighting, with reference to Theorem 3.6 in Section
I.4.3 of \cite{nemhauser1988integer},  that  $(\al,\al_0)$ is proportional
to $(\omega,\omega_0)$.

\section{Trivial facets}
\label{sec:trivial}

In this section, we show which of the inequalities from the integer formulation
are facet-defining. We  restrict our study to the  general cases where
$P_{n,K}$  is  full-dimensional  (\textit{i.e.}: $K\in\{3,  4,  \hdots,
n-2\}$).

\subsection{Edge bound inequalities}

\begin{remark} The inequalities $x_{u,v}\leq  1$ for all $uv\in E$ are
not  facet-defining  since  they  are  induced by  the  two  following
inequalities:  $x_{u,v}+x_{u,i}- x_{v,i}\leq 1$  and $x_{u,v}+x_{v,i}-
x_{u,i}\leq 1$ for all $i\in V\backslash\{u,v\}$.
\end{remark}

\begin{theorem}  If  $P_{n,K}$  is full-dimensional,  the  inequalities
$x_{u,v}\geq 0$ are facet-defining  if and only if $uv\not\in\{12, 13,
23\}$.
  \label{th:uv}
\end{theorem}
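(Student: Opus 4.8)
The plan is to prove both directions. For the "only if" part, I would argue that when $uv \in \{12, 13, 23\}$ the inequality $x_{u,v} \geq 0$ cannot be facet-defining. Recall from the dimension analysis (specifically from equation~(\ref{eq:dim22}) and its surrounding discussion) that the edges $12$, $13$, $23$ play a special role: since $x_1, x_2, x_3$ were substituted out, the coefficients of $x_{1,2}, x_{1,3}, x_{2,3}$ are entangled with the representative variables $x_4,\dots,x_n$. Concretely, for $uv = 23$ I would exhibit that $x_{2,3} = 0$ forces, via $x_2 = 1 - x_{1,2}$ and $x_3 = K-2 + x_{1,2} - \sum_{i=4}^n x_i$ together with the representative constraints, that the face lies in a proper subface or that the inequality is implied by a combination of other constraints (e.g.\ triangle inequalities and the relabelled constraints $(2')$, $(3')$, $(5)$, $(6)$). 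The cleanest route is probably to show directly that $\{x \in P_{n,K} : x_{2,3} = 0\}$ has dimension strictly less than $|E|+|V|-4$, by checking that every such point additionally satisfies some nontrivial equation. I would do the analogous check for $12$ and $13$.

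For the "if" part — $uv \notin \{12,13,23\}$ implies $x_{u,v} \geq 0$ is facet-defining — I would follow the template announced at the end of Section~3: let $F = \{x \in P_{n,K} : x_{u,v} = 0\}$, take any facet-defining inequality $\al^T x \leq \al_0$ with $F \subseteq \{\al^T x = \al_0\}$, and show $(\al, \al_0)$ is proportional to the inequality $-x_{u,v} \leq 0$, i.e.\ that $\al_0 = 0$, that $\al_{u,v}$ is the only possibly-nonzero coefficient, and that $\al_{u,v} < 0$ (so that after normalization we recover $x_{u,v} \geq 0$). The strategy is: every $K$-partition $\pi$ in which $u$ and $v$ lie in different clusters has $x^\pi \in F$, so $\al^T x^\pi = \al_0$ for all such $\pi$. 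I would then reuse Lemmas~\ref{lemma1}, \ref{lemma2}, \ref{lemma4}, \ref{lemma5}: these lemmas only require certain clusters to be formed and certain transformations to stay inside $H$, and as long as all the partitions involved keep $u$ and $v$ separated, the same conclusions force the relevant $\al$ coefficients to vanish. The point is that with $u,v$ in different clusters there is enough freedom (since $K \leq n-2$, we can pad with extra singleton or doubleton clusters) to run essentially the same argument as in Theorem~\ref{th:dim}, concluding $\al_e = 0$ for all $e \neq uv$, all representative coefficients vanish, and $\al_0 = 0$; the sign $\al_{u,v} \le 0$ then follows because $\al^T x \le \al_0 = 0$ must hold on points with $x_{u,v} = 1$.

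The main obstacle is bookkeeping the case analysis so that the lemma applications never require $u$ and $v$ to be together: the lemmas are stated for abstract clusters $C_1, C_2$ and vertices $r_1, c_1$, etc., so I must choose, for each coefficient I want to kill, a configuration of clusters that (a) keeps $u,v$ apart throughout the four or five partitions in the lemma, and (b) is feasible, i.e.\ uses at most $K$ clusters with at least two of them nontrivial. The vertices $1,2,3$ need separate care — after the substitution their coefficients are nominally zero, but I still need to recover, via Lemmas~\ref{lemma2},~\ref{lemma4},~\ref{lemma5} applied with $r_1 \in \{1,2\}$, that $\al_{1,2} = \al_{1,3} = \al_{2,3}$ and that the $\al_s$ for $s \ge 4$ are determined, then use a configuration placing $1,2,3$ in three different clusters (possible since $K \geq 3$) to force the common value $\beta$ to $0$ — exactly as in the endgame of Theorem~\ref{th:dim}, the only difference being the global constraint $x_{u,v}=0$, which is compatible with all these configurations precisely because $uv \notin \{12,13,23\}$. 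I expect the "only if" direction to be the shorter one and the feasibility-tracking in the "if" direction to be where most of the work lies.
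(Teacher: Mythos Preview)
Your plan matches the paper's strategy on both directions: for ``only if'' exhibit an additional hyperplane containing the face, and for ``if'' assume $F_{u,v}\subseteq\{\al^Tx=\al_0\}$ and re-run the dimension-lemma machinery while keeping $u$ and $v$ separated. Two execution points deserve comment. First, the paper makes the ``only if'' direction concrete: if $x_{1,2}=0$ then by the triangle inequality $x_{1,3}+x_{2,3}\in\{0,1\}$, and in either case one checks $\sum_{i=4}^n x_i = x_{1,3}+x_{2,3}+K-3$; the cases $x_{1,3}=0$ and $x_{2,3}=0$ are symmetric. Second, for the ``if'' direction the paper's device is to take $C_3=\{v\}$ while replaying the \emph{first part} of Theorem~\ref{th:dim}, which yields $\al_{i,j}=0$ for $ij\in E\setminus\{12,13,23\}$ with $i,j\neq v$, $\al_{1,2}=\al_{1,3}=\al_{2,3}=:\beta$, and $\al_i=-2\beta$ for $i\neq v$ --- but it does \emph{not} re-run the endgame (Lemma~\ref{lemma4} with $1,2,3$ in three clusters) as you propose. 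That step needs $r_1',r_2'\in\{4,\dots,n\}$ with $\al_{r_i'}=-2\beta$, and for $n=5$ one of them is forced to be $v$, whose coefficient is still unknown at that stage. The paper instead uses three direct transformations involving $u$ and $v$, namely $\tr{\{a,b,u\}}{v}{a}$, $\tr{\{a,v\}}{u}{a}$, and $\tr{\{a,b,v\}}{u}{\{a,b\}}$ with $a,b\in\{1,2,3\}\setminus\{u\}$, which together give $\beta=0$ and $\al_{a,v}=\al_v=0$. Your endgame plan is fixable for small $n$, but this is where your proposal and the paper diverge.
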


\begin{proof}  We  first  show  that  if $uv\in\{12,  13,  23\}$  then
$x_{u,v}\geq  0$ is not  facet-defining. If  $x_{1,2}=0$, vertex  $3$ is
either linked to $1$, $2$ or none of them.  The sum of representatives
$\sum_{i=4}^nx_i$ is  equal to  $K-2$ for the  two first cases  and to
$K-3$ for  the last  one.  We  then deduce that  the face  of $P_{n,K}$
defined by $x_{1,2}\geq 0$ is not a facet since it is also included in
the  hyperplane defined  by  $\sum_{i=4}^nx_i =  x_{1,3}+x_{2,3}+K-3$.
Symmetric   reasonings  yield  similar   results  for   $x_{1,3}$  and
$x_{2,3}$.

  We now consider  an edge $uv\in E$ such that $v\geq  4$ and we show:
$F_{u,v}=\{x^\pi\in  P_{n,K}|x^\pi_{u,v}=0\}$ is  a facet  of $P_{n,K}$.
To that end, we  consider a hyperplane $H=\{ x\in\mathbb R^{|E|+|V|-3}
| \;\al^T x = \al_0\}$ which  includes $F_{u,v}$ and we prove that all
its coefficients with the exception  of $\al_{u,v}$ are equal to zero.
%  We   assume  that   $F_{u,v}$  is  included   in  a   hyperplane  %
$H=\{ x\in\mathbb R^{|E|+|V|-3}  | \;\al^T x = \al_0\}$  and % we show
that - except $\al_{u,v}$ - all the coefficients of $\al$ are equal to
zero.

The transformations used in the first part of Theorem~\ref{th:dim} can
be  used  again  here  by  setting $C_3=\{v\}$  in  order  to  ensure:
$x_{u,v}=0$. We thus obtain:
  \begin{itemize}
  \item $\al_{i,j}=0\,$ $ \forall ij\in E\backslash\{12,13,23\}, i\neq
v, j\neq v$;
  \item $\al_{1,2}=\al_{1,3}=\al_{2,3}\zdef\beta$;
  \item $\al_i=-2\beta\,$ $\forall i\neq v$.
  \end{itemize} Then we apply  Lemma~\ref{lemma1} with $r_1$ and $r_2$
in  $\s{1,2,3}\backslash\s{u}$, $v\in C_1,  C_3=\{u\}$ and  $i\in C_2$
with $i\in\{4,5,\hdots,  n\}\backslash\s{u,v}$ to deduce: $\al_{i,v}=0$.
It   remains   to  prove   that   $\al_{a,v}=\al_v=\beta=0$  for   all
$a\in\{1,2,3\}\backslash \s{u}$.

Let      $b\in\{1,2,3\}\backslash\{a,u\}$.     The     transformations
$\tr{\{a,b,u\}}{v}{a}$  and $\tr{\{a,v\}}{u}{a}$ (Figure~\ref{fig:uv1}
and~\ref{fig:uv2})   lead    to   $\beta=0$   and   $\al_{a,v}=\al_v$.
Eventually,     the     transformation    $\tr{\{a,b,v\}}{u}{\{a,b\}}$
(Figure~\ref{fig:uv3})   gives  $2\al_{a,v}=\al_v$   which   leads  to
$\al_{a,v}=\al_v=0$.

\end{proof}

\begin{minipage}{1.0\linewidth}
  \begin{center}
    \captionsetup{justification=centering, singlelinecheck=false }
    \begin{minipage}{0.3\linewidth}              \centering\trtikz{$b\;
u$}{$a$}{1}{}{$v$}{}{1.5}\\
    \end{minipage}
    \begin{minipage}{0.3\linewidth}
\centering\trtikz{$v$}{$a$}{1}{}{$u$}{}{1.5}\\
    \end{minipage}
    \begin{minipage}{0.3\linewidth}         \centering\trtikz{$v$}{$a\;
b$}{1}{}{$u$}{}{1.8}\\
    \end{minipage} \vspace{0.2cm}

    \begin{minipage}{0.3\linewidth}
      \captionof{figure}{$\tr{\{a,b,u\}}{v}{a}$}
      \label{fig:uv1}
    \end{minipage}
    \begin{minipage}{0.3\linewidth}
      \captionof{figure}{$\tr{\{a,v\}}{u}{a}$}
      \label{fig:uv2}
    \end{minipage}
    \begin{minipage}{0.3\linewidth}
      \captionof{figure}{$\tr{\{a,b,v\}}{u}{a}$}
      \label{fig:uv3}
    \end{minipage}
  \end{center}
\end{minipage}

\subsection{Representative bound inequalities}

\begin{remark}  The  inequalities $x_v\leq  1$  for  all $v\in\{4,  5,
\hdots, n\}$ are not facet-defining  since the face induced by $x_v=1$
is  contained in  the  hyperplanes $\{x\in\mathbb  R^{|E|+|V|-3}|x_{u,
v}=0\}$ for  all $u\in\{1,  2, \hdots, v-1\}$.  Indeed, if $v$  is the
representative of  a cluster $C$, it  must be the lowest  vertex of $C$.
The dimension of the face induced by $x_v\leq 1$ is thus lower than or
equal to $dim(P_{n, K})-v+1$.
\end{remark}

\begin{remark} For the same reason, the inequality $\sum_{i=4}^n x_i -
x_{1,2}\geq   K-3$   which  corresponds   to   $x_3\leq   1$  is   not
facet-defining.   Neither  is   $x_3\geq   0$  since   in  that   case
$x_{1,3}+x_{2,3}-x_{1,2}  =  1$  (\textit{i.e.}:  vertex $3$  is  not  a
representative so it is either with $1$, $2$ or both).
\end{remark}

\begin{theorem}  If  $P_{n,K}$  is full-dimensional,  the  inequalities
$x_v\geq 0$ for  all $v\in\{4, 5, \hdots, n\}$,  are facet-defining if
and only if $K\neq n-2$.
  \label{th:u}
\end{theorem}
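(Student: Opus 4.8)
The plan is to prove the two implications separately; the direction ``$K=n-2\Rightarrow$ not facet-defining'' is short, and the converse follows the hyperplane technique already used in the proofs of Theorem~\ref{th:dim} and Theorem~\ref{th:uv}. For the easy direction, fix $v\geq 4$, set $F_v=\s{x\in P_{n,K}\mid x_v=0}$, and let $x^\pi$ be any integer point of $F_v$, so $v$ is not a representative of $\pi$. Then $1$, $2$ and $3$ cannot all lie in the same cluster of $\pi$: if $\s{1,2,3}\subseteq C$ then $|C|\geq 3$, so the remaining $n-3$ vertices are distributed among the remaining $K-1=n-3$ clusters, which forces each of them to be a singleton, hence every vertex of $\s{4,\dots,n}$ to be a representative, contradicting $x^\pi_v=0$. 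But constraint $(3')$ is exactly the substituted form of the lower representative inequality $x_3+x_{1,3}+x_{2,3}\geq 1$, and it is tight precisely when $1$, $2$ and $3$ are not all in the same cluster; hence $(3')$ holds with equality on all of $F_v$. Since the hyperplane $\s{x_v=0}$ and the hyperplane given by equality in $(3')$ are distinct (the normal of the latter has $x_v$ among several nonzero coefficients), $F_v$ lies in a flat of dimension $\dim(P_{n,K})-2$, so $x_v\geq 0$ is not facet-defining when $K=n-2$.

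For $K\in\s{3,\dots,n-3}$, let $H=\s{x\in\mathbb R^{|E|+|V|-3}\mid\al^Tx=\al_0}$ be a hyperplane containing $F_v$; I must show $\al_i=0$ for every coordinate $i\neq v$ and $\al_0=0$. The idea is to replay, in order, the applications of Lemmas~\ref{lemma1},~\ref{lemma2},~\ref{lemma4},~\ref{lemma5} used in the proof of Theorem~\ref{th:dim}, but restricting every $K$-partition involved to ones in which $v$ is \emph{not} a representative, so that all the auxiliary points stay on $F_v$. In each application one keeps $v$ inside a cluster that also contains a lower-indexed vertex not ejected by the transformation at hand (typically one of $1,2,3$): whenever a transformation would move vertex $1$ out of $v$'s cluster one arranges for $2$ or $3$ to stay there, and in the applications of Lemma~\ref{lemma4} that require $3\in C_3$ one simply puts $v$ in $C_3$, which the transformations never touch. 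The hypothesis $K\leq n-3$ is precisely what makes such $K$-partitions available: the clusters carrying $1,2,3$ and $v$ use only a bounded number of vertices and the rest may be distributed freely.

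Carried out, this gives successively $\al_{i,j}=0$ for every edge $ij\notin\s{12,13,23}$ --- including every edge incident to $v$, obtained from Lemmas~\ref{lemma1} and~\ref{lemma2} with $v$ in the role of $c_1$ or $c_2$; then $\al_{1,2}=\al_{1,3}=\al_{2,3}=:\beta$ (Lemmas~\ref{lemma5} and~\ref{lemma4}); then $\al_s=-2\beta$ for every $s\in\s{4,\dots,n}\setminus\s{v}$ (Lemma~\ref{lemma2}). The analogous argument for $\al_v$ must fail, since it would need a $K$-partition with $v$ a singleton, hence a representative --- which is consistent with $\al_v$ being the one coefficient allowed to be nonzero. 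Finally, Lemma~\ref{lemma4} applied with $r_1=1$, $r_2=2$, $3\in C_3$ and $v\in C_3$ gives $2\beta+(-2\beta)+(-2\beta)=0$, so $\beta=0$; thus $\al_i=0$ for all $i\neq v$, and evaluating $\al^Tx=\al_0$ at any point of $F_v$ yields $\al_0=0$. By Theorem 3.6 in Section I.4.3 of~\cite{nemhauser1988integer}, $F_v$ is a facet.

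The delicate point in the second case is the bookkeeping that keeps $v$ a non-representative under every transformation invoked: the lemmas routinely move the smallest vertices of a cluster, and if $v$'s cluster is ever left without a vertex smaller than $v$ then $v$ becomes a representative and the point leaves $F_v$. This requires a little slack in the $K$-partitions, which is exactly what $K\leq n-3$ provides and what is missing for $K=n-2$ --- the same lack of slack being responsible, through constraint $(3')$, for the drop in dimension in that case.
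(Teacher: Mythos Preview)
Your proof is correct and follows essentially the same approach as the paper. For the case $K=n-2$, the paper exhibits the very same hyperplane you do (equality in constraint $(3')$, i.e.\ $\sum_{i=4}^n x_i = x_{1,2}+x_{1,3}+x_{2,3}+K-3$), though it arrives at it by a direct count of representatives rather than by your argument that $\s{1,2,3}$ cannot form a single cluster. For $K\leq n-3$, the paper likewise re-runs the lemma applications from the proof of Theorem~\ref{th:dim} while keeping $v$ non-representative by placing it in a cluster containing one of $1,2,3$; your treatment is in fact slightly more explicit (e.g.\ putting $v\in C_3$ with vertex $3$ in the final application of Lemma~\ref{lemma4}) than the paper's brief ``add $v$ to $C_1$ or $C_2$''. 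One small imprecision in your sketch: the order in which you list the conclusions $\al_{1,2}=\al_{1,3}=\al_{2,3}=\beta$ and $\al_s=-2\beta$ is reversed relative to their logical dependence, since deriving $\al_{1,2}=\beta$ via Lemma~\ref{lemma4} uses $\al_{r'_2}=-2\beta$, which comes from Lemma~\ref{lemma2}; this is harmless at the level of a sketch.
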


\begin{proof}  We  first  prove  that, if  $K=n-2$,  the  inequalities
$x_v\geq 0$ are not facet-defining.   In that case, only two vertices are
not  representative and  the  face induced  by  $x_v \geq  0$ is  thus
included   in  the   hyperplane   defined  by   $\sum_{i=4}^n  x_i   =
x_{1,2}+x_{1,3}+x_{2,3}+K-3$.  Indeed,  the sum of  the representative
variables $\sum_{i=4}^n  x_v$ can, vary  from $K-3$ (if each  $1$, $2$
and $3$ is  in a cluster reduced  to one vertex) to $K-1$  (if the three
first vertices are in the same cluster).

Given a  valid partition $\pi=\{C_1, C_2, \hdots,C_K\}$  we know since
$K\leq n-3$ that at least three vertices of $V$ are not representative of
their  cluster.  As a  consequence, the  partitions considered  in the
first case of the proof  of Theorem~\ref{th:dim} are still valid if we
add vertex $v$ to $C_1$ or $C_2$.  Moreover, it is easy to check that we
can always add $v$ to $C_1$ or $C_2$ in such a way that $v$ is never a
representative of its cluster (it is always in a cluster with at least
one vertex  among $1$, $2$ and  $3$).  Therefore, by  following the same
reasoning we obtain  that if the face of  $P_{n,K}$ defined by $x_v\geq
0$  is included  in  a hyperplane  $H=\{  x\in\mathbb R^{|E|+|V|-3}  |
\;\al^T x = \al_0\}$ its  only non-zero coefficient is $\al_v$.  Thus,
the only hyperplane which contains the face is $x_v=0$.

\end{proof}

\subsection{Upper representative inequalities}

\begin{theorem}  If  $P_{n,K}$  is  full-dimensional,  the  inequalities
$x_{u,v}  +  x_{v}\leq  1$  for  all  $v\geq  4$  and  all  $u<v$  are
facet-defining if and only if $n\geq 6$ or $\{u,v\}\neq \{4,5\}$.
\end{theorem}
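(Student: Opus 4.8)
The plan is to prove the two implications separately.

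\textit{Necessity.} Since $P_{n,K}$ is full-dimensional, $n\geq 5$; moreover the only pair with $v\geq 4$, $u<v$ and $\s{u,v}=\s{4,5}$ is $(u,v)=(4,5)$, which can occur only when $n=5$ (so $K=3$). In that case $x^\pi$ lies in $F:=\s{x\in P_{5,3}\mid x_{4,5}+x_5=1}$ exactly when, in $\pi$, either $\s{5}$ is a cluster (so $5$ is its own representative) or $4$ and $5$ lie in a common cluster. Enumerating these $3$-partitions of $\s{1,\dots,5}$ and evaluating, one checks that every such $x^\pi$ also satisfies
\[ x_{1,2}+x_{1,3}+x_{2,3}-x_{1,5}-x_{2,5}-x_{3,5}+x_{4,5}-2x_4-x_5=0 , \]
a second equation whose coefficients are not proportional to those of $x_{4,5}+x_5=1$. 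Hence $F$, being non-empty and contained in two hyperplanes with non-proportional equations, has affine hull of dimension at most $dim(P_{5,3})-2$, so it is not a facet.

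\textit{Sufficiency.} Assume now $3\leq K\leq n-2$ and either $n\geq 6$ or $(u,v)\neq(4,5)$. Put $F:=\s{x\in P_{n,K}\mid x_{u,v}+x_v=1}$ and suppose $F\subseteq H=\s{x\in\mathbb R^{|E|+|V|-3}\mid \al^Tx=\al_0}$. By the scheme described after Theorem~\ref{th:dim}, it suffices to prove that $\al_{u,v}=\al_v=\al_0$ and that all other components of $\al$ vanish: then $(\al,\al_0)$ is proportional to the inequality $x_{u,v}+x_v\leq 1$ and $F$ is a facet by Theorem~3.6 in Section~I.4.3 of \cite{nemhauser1988integer}. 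The integer points of $F$ are the $K$-partitions in which $u$ and $v$ lie in a common cluster — then, as $u<v$, $v$ is not a representative, so $x_{u,v}=1$, $x_v=0$ — or in which $v$ is a representative — then $x_v=1$, $x_{u,v}=0$. Hence a transformation $\mathcal T(C_a,C_b,R)$ sends a partition of $F$ to a partition of $F$ whenever it neither separates $u$ from $v$ (when they are together) nor alters the representative status of $v$; this holds in particular when $C_a,C_b$ avoid $\s{u,v}$, and when $\s{u,v}\subseteq C_a$ with $R\cap\s{u,v}=\emptyset$.

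First I would run, on the clusters avoiding $u$ and $v$, exactly the transformations of the first part of the proof of Theorem~\ref{th:dim} (Lemmas~\ref{lemma1},~\ref{lemma2},~\ref{lemma4},~\ref{lemma5}), keeping the cluster containing $u$ and $v$ frozen; this should give $\al_{i,j}=0$ for all edges $ij$ with $\s{i,j}\cap\s{u,v}=\emptyset$ and $ij\notin\s{12,13,23}$, together with $\al_{1,2}=\al_{1,3}=\al_{2,3}\zdef\beta$ and $\al_w=-2\beta$ for every representative vertex $w\geq 4$, $w\neq v$. Then I would handle the coefficients attached to $u$ and $v$, and the scalar $\beta$, by ad hoc transformations: moving $v$ into or out of the cluster of $u$ (while rebalancing the number of clusters), moving other vertices into or out of the cluster $\s{u,v,\dots}$, and — this is the essential point — comparing partitions in which $v$ is a representative with partitions in which $u$ and $v$ are together, which is the only way to split $\al_{u,w}$ from $\al_{v,w}$ (when $u$ and $v$ are together one sees only the sum $\al_{u,w}+\al_{v,w}$). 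In the right order this should yield, successively, $\al_{v,w}=0$ for $w\neq u$, then $\al_{u,w}=0$ for $w\neq v$, then $\beta=0$ (hence $\al_w=0$ for the representatives too), and finally $\al_{u,v}=\al_v=\al_0$ — the last by comparing a partition with $v$ a representative to the one where $v$ has been moved into the cluster of $u$.

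The hard part will be this second stage, together with the case $K=n-2$ where $\beta=0$ cannot be obtained in the way Theorem~\ref{th:dim} does it (one needs two clusters of size at least $2$ headed by $1$ and $2$ with vertex $3$ elsewhere, which is impossible when $K=n-2$). Each transformation used there requires a $K$-partition of prescribed shape lying in $F$, which exists only when there are enough vertices or enough clusters to host it; so a short case analysis is needed — on whether $u\in\s{1,2,3}$ (then $\al_u=0$ by convention and several relations collapse) and on how close $K$ and $v$ are to $n-2$ and $n$. The single configuration admitting no usable transformation is $n=5$, $(u,v)=(4,5)$, which is exactly the excluded case and is consistent with the necessity part.
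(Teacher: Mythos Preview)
Your necessity argument is correct; the extra equation you exhibit is equivalent (modulo the face equation $x_{4,5}+x_5=1$) to the one the paper uses, $2(x_4+x_5)+x_{1,5}+x_{2,5}+x_{3,5}=x_{1,2}+x_{1,3}+x_{2,3}+1$.

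For sufficiency the strategy is the paper's, but your first stage has a concrete flaw: you freeze a cluster containing \emph{both} $u$ and $v$, whereas the paper isolates $v$ \emph{alone} (then $x_v=1$ and the partition lies in $F$). This choice hurts you in three ways. If $u\in\{1,2,3\}$, freezing $u$ removes one of the three anchor vertices on which Lemmas~\ref{lemma1}--\ref{lemma5} are built, so the relations you list do not follow without a genuine reworking. If $u\geq 4$, your claim $\al_u=-2\beta$ cannot be obtained while $u$ sits frozen; that step in Theorem~\ref{th:dim} moves the vertex in question. Finally, when $K=n-2$, freezing a two-element cluster leaves $n-2$ vertices to be split into $K-1=n-3$ clusters, so at most \emph{one} remaining cluster has size $\geq 2$ --- but every lemma you want to invoke needs two. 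Isolating $\{v\}$ alone avoids all three issues at once: $1,2,3$ and $u$ stay free, and $n-1$ vertices in $n-3$ clusters still allow two doubletons. This is why the paper's first stage already yields $\al_{i,j}=0$ for all $i,j\neq v$ (not just $i,j\notin\{u,v\}$) and $\al_i=-2\beta$ for all $i\geq 4$, $i\neq v$ --- including $i=u$.

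Your second stage is only a plan, and you defer the role of the hypothesis to an unspecified case analysis. In fact it enters at one precise point. After the first stage (paper's version) one still needs: (i) $\al_{i,v}=0$ for $i\neq u$, via $\trd{u,v,i}{j}{i}$ with $j\in\{1,2,3\}\setminus\{i,u\}$; (ii) $\beta=0$, via $\trd{a,b}{i}{a}$ with $a,b$ two distinct vertices of $\{1,2,3\}$, applied in a partition where $\{u,v\}$ occupies a third cluster and $i\geq 4$, $i\notin\{u,v\}$; (iii) $\al_v=\al_{u,v}$, via $\trd{a,u}{v}{u}$. Step (ii) is exactly where the hypothesis is consumed: a vertex $i\geq 4$ with $i\notin\{u,v\}$ exists if and only if $n\geq 6$ or $\{u,v\}\neq\{4,5\}$. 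No further case split on $u\in\{1,2,3\}$ or on $K=n-2$ is needed.
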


\begin{proof}$P_{n,K}$ is full-dimensional if and only if $K\in\s{3, 4,
\hdots, n-2}$.  Therefore, $n$ is greater than four and if it is equal
to five, $K$ must be equal to  three. As a result, if $n=5$, $u=4$ and
$v=5$,  every  $K-$partition  which  satisfies $x_{4,5}+x_5  =  1$  is
contained  in  the hyperplane  induced  by $2(x_4+x_5)+\sum_{i\leq  3}
x_{i,5} = x_{1,2}+x_{1,3}+x_{2,3}+1$.

In   other   cases,   we   assume  that   the   face   $F_{u,v}=\{x\in
P_{n,K}~|~x_{u,v}+x_v=1\}$  is  included in  a  hyperplane induced  by:
$\al^T x =  \al_0$.  To start with, we deduce,  similarly to the proof
of   Theorem~\ref{th:uv},    that   $\al_{i,j}=0\,$   $\forall   ij\in
E\backslash\{12,13,23\}$    with   $i\neq    v$    and   $j\neq    v$;
$\al_{1,2}=\al_{1,3}=\al_{2,3}\zdef\beta$;    $\al_i=-2\beta\,$   $\forall
i\neq   v$.   The   transformation   $\trd{u,v,i}{j}{i}$  with   $i\in
V\backslash\s{u,v}$   and   $j\in  \s{1,2,3}\backslash\s{i,u}$   gives
$\al_{i,v}=0$.

      We now consider a partition $\pi=\{C_1, C_2, \hdots, C_K\}$ such
that:
      \begin{itemize}
      \item $C_1=\{a,b\}$,  with $a$ and $b$ two  distinct vertices lower
than 4;
      \item $C_2=i$,  with $i\geq  4$ and different  from $u$  and $v$
(which is always possible whether $n\geq 6$ or $\{u,v\}\neq \{4,5\}$);
      \item $\s{u,v}\subset C_3$.
      \end{itemize}

The    transformation
$\tr{\{a,b\}}{i}{a}$  (Figure~\ref{fig:u+uv})  shows  that $\beta$  is
equal to zero. Eventually, the transformation $\tr{\{a,u\}} v u$ leads
to $\al_v=\al_{u,v}$ which concludes this proof.

\begin{figure}[h]
  \centering
  \trtikz{$b$}{$a$}{1}{}{$i$}{}{1.5}
  \caption{$\tr{\{a,b\}} i a$}
  \label{fig:u+uv}
\end{figure}

\end{proof}

\begin{theorem}  If  $P_{n,K}$  is  full-dimensional  the  inequalities
$x_{1,2}+x_{a,3}-\sum_{i=4}^n x_i \leq  3-K$ for $a\in\{1,2\}$ - which
correspond to $x_{a,3}+x_3\leq 1$ - are facet-defining.
  \label{th:3-1}
\end{theorem}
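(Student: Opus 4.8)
The plan is to mimic the proofs of Theorems~\ref{th:dim} and~\ref{th:uv}. Write the inequality as $\omega^T x\le\omega_0$, so that $\omega_{1,2}=\omega_{a,3}=1$, $\omega_i=-1$ for $i\ge 4$, all remaining entries are $0$, and $\omega_0=3-K$; let $b$ be the element of $\{1,2\}$ distinct from $a$. The first thing I would record is a combinatorial description of the face $F_a=\{x\in P_{n,K}\mid x_{a,3}+x_3=1\}$: a $K$-partition lies on $F_a$ exactly when vertex $3$ is linked to $a$ (then $x_3=0$) or $3$ is the representative of its cluster, i.e. linked to neither $1$ nor $2$ (then $x_{a,3}=0$); the only configurations excluded are those in which $3$ is linked to $b$ but not to $a$. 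Since $P_{n,K}$ is full-dimensional, $K\in\{3,\dots,n-2\}$ and $n\ge 5$. Assuming $F_a\subseteq H=\{x\mid\alpha^T x=\alpha_0\}$ for a facet-defining inequality $\alpha^T x\le\alpha_0$, the goal is to show that $(\alpha,\alpha_0)$ is proportional to $(\omega,\omega_0)$.

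\textbf{Step 1 (the trivial coefficients).} Partitions in which $3$ forms a singleton cluster lie on $F_a$, and so do partitions in which $\{a,3\}$ is a two-element set. On these, the transformations of Lemmas~\ref{lemma1} and~\ref{lemma2}, applied as in the proofs of Theorems~\ref{th:dim} and~\ref{th:uv} (with the cluster carrying $3$ playing the role of a fixed cluster), stay on $F_a$ and yield $\alpha_{i,j}=0$ for every edge $ij\ne 12$ with $3\notin\{i,j\}$, as well as $\alpha_{3,k}=0$ for all $k\ge 4$. After this step only $\alpha_{1,2},\alpha_{1,3},\alpha_{2,3}$, the representative coefficients $\alpha_i$ $(i\ge 4)$, and $\alpha_0$ remain undetermined. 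In the tight case $n=5$, $K=3$ a few of these zeros do not fit a lemma because the needed clusters do not all fit; they follow instead by directly equating $\alpha^T x$ on two explicit $3$-partitions of $F_a$ that differ by moving a single vertex.

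\textbf{Step 2 (the surviving coefficients).} Here I exploit the two regimes of $F_a$ (vertex $3$ a representative versus $3$ linked to $a$), joined by moving $3$. Transferring $3$ out of an $\{a,3\}$-cluster into a cluster whose representative is some $s\ge 4$ — after the move $3<s$ is that cluster's representative, so we remain on $F_a$ — turns $\alpha^T x^\pi=\alpha^T x^{\pi'}$, via Step~1, into $\alpha_{a,3}+\alpha_s=0$; as $s$ is arbitrary this forces all $\alpha_i$ $(i\ge 4)$ to a common value $\gamma$ and $\alpha_{a,3}=-\gamma$. Transferring $3$ out of a $\{1,2,3\}$-cluster into a cluster with representative $\ge 4$ likewise stays on $F_a$ and gives $\alpha_{1,3}+\alpha_{2,3}=-\gamma$, hence $\alpha_{b,3}=0$. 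Transferring $b$ out of a $\{1,2,3\}$-cluster into a cluster with representative $\ge 4$ stays on $F_a$ and gives $\alpha_{1,2}+\alpha_{b,3}+\gamma=0$, hence $\alpha_{1,2}=-\gamma$. Finally, evaluating $\alpha^T x=\alpha_0$ on the partition $\{1,2,3\},\{4\},\dots$ (which exists since $K\le n-2$) gives $\alpha_0=\gamma(K-3)$. Thus $\alpha=-\gamma\,\omega$ and $\alpha_0=-\gamma\,\omega_0$, so by Theorem~3.6 of Section~I.4.3 of~\cite{nemhauser1988integer} the inequality is facet-defining.

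The main obstacle is that vertex $3$ is simultaneously one of the three substituted-out vertices and an endpoint of two of the three special edges $12,13,23$, so the clean $C_3=\{v\}$ device of Theorem~\ref{th:uv} does not transfer: a cluster equal to $\{3\}$ never activates the edges $13$ or $23$, so the relations among $\alpha_{1,2},\alpha_{1,3},\alpha_{2,3}$ must be extracted from moves that slide $3$ between being a representative and being linked to $a$, always avoiding the forbidden configuration ``$3$ with $b$ but not $a$''. Checking that the required $\{1,2,3\}$- and $\{a,3\}$-clusters together with an auxiliary cluster of representative $\ge 4$ can always coexist — which is exactly where the hypothesis $K\le n-2$ is used — and clearing the residual $n=5$, $K=3$ zeros by hand, is the delicate part.
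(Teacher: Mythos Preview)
Your overall strategy matches the paper's: assume $F_a\subset\{\alpha^Tx=\alpha_0\}$, kill most $\alpha$-coefficients via the transformation lemmas, then pin down the remaining ones by sliding vertex $3$ (and $b$) between the two regimes of the face. Step~2 is correct, and its three moves are essentially the paper's transformations $\mathcal T(\{a,3\},i,a)$, $\mathcal T(\{1,2,3\},i,3)$ and (implicitly) $\mathcal T(\{c,i\},d,i)$.

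The gap is in Step~1. You claim that freezing a cluster containing $3$ (either $\{3\}$ or $\{a,3\}$) and rerunning the Lemma~\ref{lemma1}/\ref{lemma2} machinery of Theorems~\ref{th:dim} and~\ref{th:uv} yields $\alpha_{i,j}=0$ for every $ij\neq 12$ with $3\notin\{i,j\}$, \emph{and} $\alpha_{3,k}=0$ for $k\ge 4$. But that machinery needs all three low-index vertices $1,2,3$ available to play $r_1,r_2,c_1$; in Theorem~\ref{th:uv} this works because the frozen vertex $v$ has index $\ge 4$. Here you freeze $3$, so the dimension-proof steps that set $r_2=3$ (to obtain $\alpha_{2,c_2}=0$) or $\{1,3\}\subset C_1$ (to obtain $\alpha_{1,c_2}=0$) are unavailable. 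With either $\{3\}$ or $\{a,3\}$ frozen, Lemma~\ref{lemma1} on the remaining vertices only delivers $\alpha_{i,j}=0$ for $i,j\ge 4$; it does not give $\alpha_{a,i}=\alpha_{b,i}=0$, and it cannot give $\alpha_{3,k}=0$ at all, since no transformation that avoids $3$ constrains coefficients on edges incident to $3$. Your own closing paragraph names exactly this obstruction --- but only for the three special edges $12,13,23$; it bites the rest of Step~1 as well. Since Step~2 uses $\alpha_{3,t}=0$ and $\alpha_{b,t}=0$ for $t\ge 4$, the argument as written does not close.

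The paper avoids this by \emph{not} freezing $3$: it applies Lemmas~\ref{lemma1} and~\ref{lemma4} with $r_1=a$ and $r_2=3$ (so $3$ is moved), after checking that all four partitions in each lemma remain on $F_a$. This already gives $\alpha_{i,j}=0$ for $i,j\ge 4$ and $\alpha_i=-\alpha_{a,3}$ for all $i\ge 4$. A second application of Lemma~\ref{lemma4} with $r_1=1$, $r_2=2$, $3\in C_3$ gives $\alpha_{1,2}=\alpha_{a,3}$. Then $\mathcal T(\{c,i\},d,i)$ for $c,d\in\{1,2,3\}$ shows $\alpha_{1,i}=\alpha_{2,i}=\alpha_{3,i}$, and $\mathcal T(\{a,3\},i,a)$ forces this common value to $0$; finally $\mathcal T(\{1,2,3\},i,3)$ yields $\alpha_{b,3}=0$. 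If you want to keep your outline, replace Step~1 by the first two of these ingredients.
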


\begin{proof}    We    assume     that    the    face    $F_{u}=\{x\in
P_{n,K}|x_{1,2}-\sum_{i=4}^n   x_i  +x_{a,3}=3,K\}$   of   $P_{n,K}$  is
included in a hyperplane $H=\{ x\in\mathbb R^{|E|+|V|-3} | \;\al^T x =
\al_0\}$    and    we    show    that    $\al^Tx$    is    equal    to
$\al_{1,2}x_{1,2}+\al_{a,3}x_{a,3}-\sum_{i=4}^n  \al_ix_i $.   Let $b$
be  whichever  vertex,  $1$ or  $2$,  is  different  from $a$.   We  use
Lemma~\ref{lemma1} with  $r_1=a$, $r_2=3$  and $b\in C_3$  to show
$\al_{i,j}=0\,$ $ \forall i,j\geq  4$.  We then use Lemma~\ref{lemma4}
with    $r_1=a$,   $r_2=3$,    and   $b\in    C_3$    to   obtain:
$2\al_{a,3}+\al_i+\al_j=0\,$ $\forall i,j\geq  4$. We thus deduce that
the coefficients $\al_i$  are all equal to $-\al_{a,3}$.  We then show
$\al_{1,2}=\al_{a,3}$  by using  Lemma~\ref{lemma4}  with $1\in  C_1$,
$2\in C_2$ and $3\in C_3$.

  We    now   have   to    show   $\al_{c,i}=\al_{d,i}=0$    for   all
$c,d\in\s{1,2,3}$ and  $i\geq 4$. The transformation  $\tr{\{c, i\}} d
i$ shows $\al_{c,i}= \al_{d,i}$. Then the transformation $\tr{\{a,3\}}
i a$ with $i\geq 4$ yields $\al_{a,i}=0$.

  Eventually,   we  prove  that   $\al_{b,3}=0$  by   considering  the
transformation $\tr{\{1,2,3\}} i 3$ with $i\geq 4$.
\end{proof}

\subsection{Lower representative inequalities}

\begin{theorem}  If  $P_{n,K}$  is  full-dimensional  the  inequalities
$x_{u}  +  \sum_{i=1}^{u-1}x_{i,u}\geq1$  for   all  $u  \geq  4$  are
facet-defining.
\end{theorem}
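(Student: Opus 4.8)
The plan is to follow the pattern of the proofs of Theorems~\ref{th:uv} and~\ref{th:3-1}. Write the inequality as $\omega^Tx\geq\omega_0$ with $\omega_u=1$, $\omega_{i,u}=1$ for every $i<u$, all remaining components equal to $0$, and $\omega_0=1$. Assume the face $F_u=\{x\in P_{n,K}\mid x_u+\sum_{i=1}^{u-1}x_{i,u}=1\}$ is contained in a hyperplane $H=\{x\in\mathbb R^{|E|+|V|-3}\mid \alpha^Tx=\alpha_0\}$. I will show that $(\alpha,\alpha_0)$ is proportional to $(\omega,\omega_0)$, whence $F_u$ is a facet by Theorem~3.6 of~\cite{nemhauser1988integer}. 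The key starting observation is that any $K$-partition in which $u$ forms a singleton cluster lies in $F_u$ (then $x_u=1$), and more generally that a partition in $F_u$ stays in $F_u$ under any transformation $\mathcal T(C,C',R)$ that does not touch the cluster containing $u$.

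First I reduce to the $u$-incident coefficients. Parking $\{u\}$ as the cluster $C_3$ and replaying the transformations of the first part of the proof of Theorem~\ref{th:dim} (exactly as $C_3=\{v\}$ was used in Theorem~\ref{th:uv}) yields $\alpha_{i,j}=0$ for all $ij\in E\setminus\{12,13,23\}$ with $i\neq u$ and $j\neq u$; $\alpha_{1,2}=\alpha_{1,3}=\alpha_{2,3}\zdef\beta$; and $\alpha_i=-2\beta$ for all $i\in\{4,\dots,n\}\setminus\{u\}$. All these transformations act only on $C_1,C_2$, so they keep $\{u\}$ intact and remain in $F_u$.

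Next I eliminate $\beta$. Here one cannot copy the device of Theorem~\ref{th:dim} consisting of placing $1$, $2$, $3$ in three distinct clusters and invoking Lemma~\ref{lemma4}, since for $K=n-2$ there is no room to do so while also parking $u$ and keeping the cluster of $2$ of size at least two; instead I use a transformation among the other clusters. Pick $\pi\in F_u$ having a cluster $\{1,u\}$ (so $u$ is not a representative and its count equals $x_{1,u}=1$), a cluster $\{2,3\}$, and a cluster $C_s$ whose representative is some $s\geq 4$ with $s\neq u$, the other vertices being distributed among the remaining $K-3$ clusters — possible precisely because $3\leq K\leq n-2$. Applying $\mathcal T(\{2,3\},C_s,\{3,s\})$ replaces these two clusters by $\{2,s\}\cup(C_s\setminus\{s\})$ and $\{3\}$, leaving $\{1,u\}$ untouched, so the new partition is again in $F_u$; equating $\alpha^Tx^\pi=\alpha^Tx^{\pi'}$ and using $\alpha_{2,3}=\beta$, $\alpha_{2,s}=0$, $\alpha_s=-2\beta$, $\alpha_2=\alpha_3=0$ (all other modified terms vanishing) forces $\beta=0$. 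Thus every coefficient of $\alpha$ except $\alpha_u$ and the coefficients of the edges incident to $u$ is now zero.

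Finally I handle the $u$-incident edges. For $j>u$ (if any), take $\pi\in F_u$ containing the clusters $\{u,j\}$ (so $x_u=1$) and $\{1\}$; the transformation $\mathcal T(\{u,j\},\{1\},\{j\})$ turns them into $\{u\}$ and $\{1,j\}$, and comparing $\alpha^Tx$ before and after gives $\alpha_{u,j}=\alpha_{1,j}=0$. For $i<u$, take $\pi\in F_u$ containing $\{i,u\}$ and a singleton $\{j\}$ with $j<u$, $j\neq i$ (this also covers the case $u=n$, where there is no vertex above $u$); the transformation $\mathcal T(\{i,u\},\{j\},\{u,j\})$ turns them into $\{i,j\}$ and $\{u\}$, and comparing $\alpha^Tx$ — in which every non-$u$ coefficient that occurs is already zero — gives $\alpha_{i,u}=\alpha_u$. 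Writing $\gamma$ for this common value, evaluating $\alpha^Tx=\alpha_0$ at any partition with $u$ a singleton gives $\alpha_0=\alpha_u=\gamma$. Hence $(\alpha,\alpha_0)=\gamma(\omega,\omega_0)$, which completes the proof; the step that required the most care is the elimination of $\beta$ described above, which has to be carried out uniformly for all full-dimensional $K$, including $K=n-2$.
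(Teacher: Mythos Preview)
Your argument is correct and follows the same overall strategy as the paper: park $u$ and replay the reductions of Theorem~\ref{th:dim} to isolate $\beta$ and the $u$-incident coefficients, then kill $\beta$ and identify $\alpha_{i,u}$ with $\alpha_u$ via a few ad hoc transformations. The specific transformations you choose for the second phase differ from the paper's --- the paper moves $u$ itself (via $\mathcal T(\{c,u\},\{d\},\{u\})$, $\mathcal T(\{1,2\},\{u\},\{2\})$ and $\mathcal T(\{1,2,3\},\{u\},\{2\})$) to get $\alpha_{c,u}=\gamma$, $\beta+\alpha_u=\gamma$, $2\beta+\alpha_u=\gamma$ simultaneously, and then $\mathcal T(\{c,u\},\{d,i\},\{u\})$ for $i>u$ --- whereas you first kill $\beta$ by a transformation away from $u$ and only then treat the $u$-edges. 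Both routes work for every $K\in\{3,\dots,n-2\}$, and neither is materially simpler than the other.

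One small slip to fix: with $R=\{3,s\}$ the transformation $\mathcal T(\{2,3\},C_s,\{3,s\})$ yields the clusters $\{2,s\}$ and $(C_s\setminus\{s\})\cup\{3\}$, not $\{2,s\}\cup(C_s\setminus\{s\})$ and $\{3\}$ as you wrote. This is harmless for your conclusion, since the extra modified coefficients $\alpha_{s,t}$ and $\alpha_{3,t}$ for $t\in C_s\setminus\{s\}$ are already zero by your first step, so the balance $\alpha_{2,3}+\alpha_s=\beta-2\beta=0$ is unaffected; but the sentence should be corrected. Also, when $K=3$ there are no ``remaining $K-3$ clusters'', so $C_s$ necessarily absorbs all vertices outside $\{1,u,2,3\}$ --- your wording glosses over this, though the computation still goes through.
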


\begin{proof}   We   assume   that   the   face   $F_{u}=\{x\in
P_{n,K}|x_{u}   +  \sum_{i=1}^{u-1}x_{i,u}=1\}$   is   included  in   a
hyperplane $H=\{ x\in\mathbb R^{|E|+|V|-3}  | \;\al^T x = \al_0\}$ and
we show $\al^Tx=\al_ux_{u} + \sum_{i=1}^{u-1}\al_{i,u}x_{i,u}$.

  We again  deduce similarly to the proof  of Theorem~\ref{th:uv} that
$\al_{i,j}=0\,$  $\forall  i,j\in\{ij\in E\backslash\{12,13,23\}|i\neq
u,       j\neq       u\}$,      $\al_{1,2}=\al_{1,3}=\al_{2,3}\zdef\beta$,
$\al_i=-2\beta\,$ $\forall i\neq u$.

  Let $c$  and $d$ be  two vertices lower  than $u$. It is  then possible
through the transformation $\tr{\{c, u\}} d u$ (Figure~\ref{fig:u+s1})
to show that $\al_{c,u}$ and $\al_{d,u}$  are equal to a value that we
denote  as  $\gamma$.   The  two   transformations  $\tr{\{1,2\}}   u   2$  and
$\tr{\{1,2,3\}} u 2$  (Figures~\ref{fig:u+s2} and \ref{fig:u+s3}) lead
respectively  to  $\beta  +  \al_u  =\gamma$ and  $2\beta  +  \al_u  =
\gamma$. Therefore, $\beta=0$ and $\al_u=\gamma$.

  \begin{center}

\captionsetup{justification=centering, singlelinecheck=false }

\begin{minipage}{1.0\linewidth}
  \begin{minipage}{0.3\linewidth}
\centering\trtikz{$c$}{$u$}{1}{}{$d$}{}{1.5}\\
  \end{minipage}
  \begin{minipage}{0.3\linewidth}
\centering\trtikz{$1$}{$2$}{1}{}{$u$}{}{1.5}\\
  \end{minipage}
  \begin{minipage}{0.3\linewidth}                \centering\trtikz{$1\;
3$}{$2$}{1}{}{$u$}{}{1.5}\\
  \end{minipage} \vspace{0.2cm}

  \begin{minipage}{0.3\linewidth}
    \captionof{figure}{ $\tr{\{c, u\}} d u$}
    \label{fig:u+s1}
  \end{minipage}
  \begin{minipage}{0.3\linewidth}
    \captionof{figure}{$\tr{\{1,2\}} u 2$}
    \label{fig:u+s2}
  \end{minipage}
  \begin{minipage}{0.3\linewidth}
    \captionof{figure}{ $\tr{\{1,2,3\}} u 2$}
    \label{fig:u+s3}
  \end{minipage}
\end{minipage}
\end{center}

  We eventually have  to prove that $\al_{i,u}=0$ for  all $i$ greater
than $u$ thanks to the transformation $\tr{\{c,u\}}{\{d,i\}} u$.
\end{proof}

\begin{theorem}  If   $P_{n,K}$  is  full-dimensional   the  inequality
$x_{1,2}+x_{1,3}+x_{2,3}-\sum_{i=4}^n x_i  \geq 3-K$ for $a\in\{1,2\}$
-   which   corresponds    to   $x_3+x_{1,3}+x_{2,3}\leq   1$   -   is
facet-defining.
  \label{th:3-2}
\end{theorem}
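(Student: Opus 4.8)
The plan is the polyhedral bootstrap already used for Theorems~\ref{th:uv} and~\ref{th:3-1}. Set $F=\s{x\in P_{n,K}\mid x_{1,2}+x_{1,3}+x_{2,3}-\sum_{i=4}^n x_i=3-K}$. Because $x_3$ was substituted by $K-2+x_{1,2}-\sum_{i=4}^n x_i$, this face is exactly the set of characteristic vectors $x^\pi$ at which the lower representative inequality for $j=3$ is tight, i.e.\ for which $\pi$ does \emph{not} place the three vertices $1$, $2$, $3$ inside a single cluster. I would assume $F\subseteq H=\s{x\mid\al^Tx=\al_0}$ and show that $(\al,\al_0)$ is proportional to the inequality $x_{1,2}+x_{1,3}+x_{2,3}-\sum_{i=4}^n x_i\ge 3-K$; Theorem 3.6 in Section I.4.3 of \cite{nemhauser1988integer} then gives that $F$ is a facet. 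The one rule to obey throughout is that every $K$-partition entering an application of Lemmas~\ref{lemma1}--\ref{lemma5} must keep $1$, $2$, $3$ spread over at most two clusters, so that all of $\pi,\pi^1,\pi^2,\dots$ stay in $F$.

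First I would recover $\al_{i,j}=0$ for every $ij\in E\setminus\s{12,13,23}$ by replaying the corresponding steps of Theorem~\ref{th:dim} with configurations compatible with that rule: Lemma~\ref{lemma1} with $r_1=1$, $r_2=2$ and vertex $3$ sitting in a third cluster handles $\al_{c_1,c_2}$ for $c_1,c_2\ge4$; Lemma~\ref{lemma1} with $1$ the representative of $C_1$ containing one of $\s{2,3}$ as $c_1$ and the remaining element of $\s{2,3}$ the representative of $C_2$ holding some $c_2\ge4$ handles $\al_{2,c_2}$ and $\al_{3,c_2}$; and Lemma~\ref{lemma2} with $\s{1,3}\subseteq C_1$, $2\in C_2$, $c_2\ge4$ handles $\al_{1,c_2}$ (cancelling $\al_3$, which is $0$ by convention). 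In each of these families the partitions involved keep $1$, $2$, $3$ within two clusters, hence lie in $F$.

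Next I would prove $\al_i=\al_j=:\delta$ for all $4\le i<j$ by applying Lemma~\ref{lemma2} with $r_1=1$, $r_2=2$, $c_2=i$, $C_1=\s{1,j}$ (so $r'_1=j$) and $3$ alone in its own cluster: since $\al_{1,i}=0$ from the previous step, its conclusion $2\al_{1,i}+\al_j=\al_{\min(i,j)}$ reads $\al_j=\al_i$. Finally I would determine the three special coefficients via Lemma~\ref{lemma4}: applied to the pair of clusters with representatives $1$ and $2$ --- both second-lowest vertices $\ge4$, vertex $3$ alone --- it gives $2\al_{1,2}+\al_{r'_1}+\al_{r'_2}=2\al_2=0$, so $\al_{1,2}=-\delta$; applied to the pairs with representatives $(1,3)$ and $(2,3)$, parking the remaining small vertex alone, it gives in the same way $\al_{1,3}=\al_{2,3}=-\delta$ (using $\al_3=0$). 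Writing $\beta=-\delta$ we obtain $\al^Tx=\beta\bigl(x_{1,2}+x_{1,3}+x_{2,3}-\sum_{i=4}^n x_i\bigr)$, and evaluation at any point of $F$ forces $\al_0=\beta(3-K)$: the required proportionality. All these configurations are feasible since $K\le n-2$ leaves enough free vertices to complete the $K-3$ auxiliary clusters; in the extreme case $n=5$, $K=3$ the single admissible configuration at each step already does the job.

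I expect the only genuine obstacle to be the bookkeeping of the first and last paragraphs: for each invocation of Lemmas~\ref{lemma1}--\ref{lemma5} one must exhibit an explicit configuration whose four (or five) partitions never collect $1$, $2$, $3$ into one cluster. This is exactly where the proof diverges from Theorem~\ref{th:dim}: the steps there that produced $\al_i=-2\beta$ (Lemma~\ref{lemma2} with $c_2=3$) pass through a partition with $1$, $2$, $3$ together and are therefore unavailable, so here $\al_{1,2}$, $\al_{1,3}$, $\al_{2,3}$ must be tied to the $\al_i$ through Lemma~\ref{lemma4} rather than directly. Once the admissible configurations are in hand the remainder is routine and parallels Theorems~\ref{th:uv} and~\ref{th:3-1}.
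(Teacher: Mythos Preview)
Your argument is correct: every invocation of Lemmas~\ref{lemma1}, \ref{lemma2}, \ref{lemma4} that you list keeps $1,2,3$ spread over at least two clusters, the conclusions chain together as you say, and the counting $K\le n-2$ guarantees enough spare vertices to complete the auxiliary clusters (including the extreme case $n=5$, $K=3$).

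The paper, however, takes a shorter route. It does not replay the dimension proof; instead it observes that the proof of Theorem~\ref{th:3-1} (the upper representative inequality $x_{a,3}+x_3\le1$) goes through verbatim on the present face for either choice $a\in\{1,2\}$, with the single exception of its very last transformation $\tr{\{1,2,3\}}{i}{3}$, which starts from a partition with $1,2,3$ together and is therefore dropped. Running Theorem~\ref{th:3-1} with $a=1$ gives $\al_i=-\al_{1,3}$ and $\al_{1,2}=\al_{1,3}$; running it with $a=2$ gives $\al_i=-\al_{2,3}$, hence $\al_{1,3}=\al_{2,3}$, and all edge coefficients outside $\{12,13,23\}$ vanish. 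The net effect is the same proportionality you reach, but obtained by quoting an already-proved theorem twice rather than rebuilding the lemma chain. Your approach has the merit of being self-contained and of making explicit which configurations are admissible; the paper's has the merit of brevity.
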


\begin{proof} The proof is  similar to the one of Theorem~\ref{th:3-1}
by considering that $a$ is either  1 or 2. The only difference is that
the last  transformation ($\mathcal T(\s{1,2,3}$  $\s{i} \s{3})$) does
not have to be considered.
\end{proof}

\subsection{Triangle inequalities}

\begin{theorem}  If   $P_{n,K}$  is  full-dimensional   the  inequality
$x_{s,t_1}+x_{s,t_2}-x_{t_1,t_2}\leq  1$ for  $s,t_1,t_2$  distinct in
$V$  is facet-defining  if and  only if  the following  conditions are
satisfied
  \begin{enumerate}[(i)]
  \item $s<t_1$ or $s<t_2$;
  \item $\{s,t_1,t_2\}\neq\{1,2,3\}$.
  \end{enumerate}
  \label{th:triangle}
\end{theorem}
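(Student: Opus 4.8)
The strategy follows the template announced after Theorem~\ref{th:dim}: we must show (a) necessity of conditions (i) and (ii), and (b) sufficiency. For necessity, suppose $s$ is larger than both $t_1$ and $t_2$; then on the face $x_{s,t_1}+x_{s,t_2}-x_{t_1,t_2}=1$ at least one of $x_{s,t_1},x_{s,t_2}$ equals $1$, so $s$ is never a representative and $x_s=0$ on the whole face, placing it in a second hyperplane — hence not a facet (for $s\geq 4$; if $s\in\{1,2,3\}$ one argues via the substituted variables similarly, e.g.\ using the relations defining $(2')$, $(3')$, $(5)$, $(6)$). For condition (ii), if $\{s,t_1,t_2\}=\{1,2,3\}$ the three edge variables $x_{1,2},x_{1,3},x_{2,3}$ and the relation with $\sum_{i\geq 4}x_i$ coming from the substitution of $x_3$ confine the face to an extra hyperplane; this should be a short direct check on the possible configurations of $\{1,2,3\}$ (all together, or split $2+1$ in either cluster).

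For sufficiency assume conditions (i) and (ii) hold; WLOG by (i) take $s<t_2$. Let $F=\{x\in P_{n,K}\mid x_{s,t_1}+x_{s,t_2}-x_{t_1,t_2}=1\}$, and suppose $F\subseteq H=\{\al^Tx=\al_0\}$; we must show $(\al,\al_0)$ is proportional to the triangle inequality, i.e.\ $\al_{s,t_1}=\al_{s,t_2}=-\al_{t_1,t_2}$, $\al_0=\al_{s,t_1}$, and every other coefficient vanishes. The first move is to reproduce the opening argument of Theorem~\ref{th:dim}: build $K$-partitions lying in $F$ by keeping the triangle $\{s,t_1,t_2\}$ in a prescribed relative position (say $s,t_1$ together and $t_2$ apart, or all three together) while manipulating the remaining vertices exactly as in Lemmas~\ref{lemma1}--\ref{lemma5}. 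This forces $\al_{i,j}=0$ for all $ij\in E\setminus\{12,13,23\}$ not meeting the triangle, $\al_{1,2}=\al_{1,3}=\al_{2,3}=:\beta$, and $\al_i=-2\beta$ for all $i\notin\{t_1,t_2\}$ (and not in $\{1,2,3\}$) — the analogue of the bookkeeping in Theorems~\ref{th:uv}, \ref{th:u} and their successors, with the triangle vertices playing the role of the ``excluded'' vertex/edge.

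The remaining work is to pin down the coefficients touching $s,t_1,t_2$ and to kill $\beta$. I would use the transformation operator $\mathcal T$ on pairs of clusters, always choosing configurations that keep the defining equation $x_{s,t_1}+x_{s,t_2}-x_{t_1,t_2}=1$ satisfied — essentially this means keeping $s$ adjacent to exactly one of $t_1,t_2$, or $s$ adjacent to neither while $t_1,t_2$ are also non-adjacent. Concretely: moving a vertex $i\geq 4$ in or out of the cluster containing an edge of the triangle gives relations of the form $\al_{i,t_1}=0$, $\al_{i,t_2}=0$ (as in the $\al_{i,v}=0$ steps of the earlier proofs); transformations swapping $t_1$ (or $t_2$) between the two active clusters give $\al_{s,t_1}=\al_{s,t_2}$ and tie these to $\beta$; and a transformation analogous to $\tr{\{1,2,3\}}{i}{3}$ or $\tr{\{a,b,u\}}{v}{a}$ yields $\beta=0$ together with $\al_{s,t_1}=-\al_{t_1,t_2}$ and $\al_{s,t_j}=0$ for any coefficient that should vanish. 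Finally evaluating $\al^Tx$ on one explicit feasible point in $F$ fixes $\al_0$.

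**Main obstacle.** The delicate point is the case analysis forced by condition (i) and the positions of $1,2,3$ relative to $\{s,t_1,t_2\}$: when $t_1$ and/or $t_2$ lie in $\{1,2,3\}$ the substituted variables $x_1,x_2,x_3$ and the coupled constraints $(2')$--$(6)$ interact with the triangle, so the ``clean'' transformations available when all of $s,t_1,t_2\geq 4$ must be replaced by ones that respect the substitution. Organizing these subcases (e.g.\ $t_1=1$, or $s=1$ with $t_1,t_2\geq 4$, etc.) so that a uniform set of $\mathcal T$-moves still keeps the partition inside $F$ and inside the feasible region — without accidentally making $s$ a representative or violating constraint $(3)$ — is where the real care is needed; the algebra afterwards is routine and parallels the trivial-facet proofs above.
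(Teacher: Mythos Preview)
Your outline is sound and would work, but the paper takes a shorter route: it observes that the triangle inequality is exactly the $2$-partition inequality~\eqref{eq:st} with $S=\{s\}$ and $T=\{t_1,t_2\}$, and defers the entire proof to Theorem~\ref{th:st}. Under that specialization, condition~(iii) of Theorem~\ref{th:st} (``$\forall s\in S\ \exists t\in T,\ t>s$'') becomes your condition~(i), and condition~(iv) (``if $|S|=1$ then $\exists u\in U\cap\{1,2,3\}$'') becomes your condition~(ii); conditions~(i) and~(ii) of Theorem~\ref{th:st} are automatically handled in the full-dimensional range. So rather than running a bespoke transformation argument for the triangle, the paper proves the general $2$-partition facet result once (with its own case analysis, including the case $|S|=1$ that covers triangles) and gets Theorem~\ref{th:triangle} for free.

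What your approach buys is self-containment: one can read the triangle proof without wading through the $2$-partition machinery, and the transformations you describe are indeed simpler than the general ones in Section~\ref{sec:2part}. What the paper's approach buys is economy and uniformity: the delicate case analysis you flag as the main obstacle---the interaction of $\{s,t_1,t_2\}$ with $\{1,2,3\}$ and the substituted variables---is absorbed into the single proof of Theorem~\ref{th:st} (specifically Lemma~\ref{le:uv} and Case~1 of the proof of Theorem~\ref{th:st}), so it does not have to be redone. If you were to complete your direct proof, you would essentially be rewriting that Case~1 argument; there is no error in your plan, just duplication.
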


Since the triangle inequalities are  a special case of the 2-partition
inequalities  the  reader can  refer  to  Theorem~\ref{th:st} for  the
proof.

\section{$2-$chorded cycle inequalities}
\label{sec:chorded}

In   this  section  we   address  the   $2-$chorded  cycle   class  of
inequalities,  first  introduced  in \cite{grotschel1990facets}.   Let
$C=\{e_1,   \hdots,  e_{|C|}\}$   be  a   cycle  in   $E$   such  that
$e_i=c_{i}c_{i+1}$  for  all $i$  in  $\{1,  2,  \hdots, |C|-1\}$  and
$e_{|C|}=c_1c_{|C|}$.   Let $V_C\zdef\{c_1,  c_2, \hdots,  c_{|C|}\}$ and
$U\zdef V\backslash V_C$.  Let $u_1,u_2,\hdots,u_{|U|}$ be the vertices of
$U$ ordered such that $u_1<u_2<\hdots<u_{|U|}$. From now on all the indices used on a vertex in
$V_C$ are given modulo $|C|$ (\textit{e.g.}  $c_{|C|+2}$ corresponds to
$c_2$).   The  set  of  $2-$chords  of  $C$  is  defined  as  $\overline
C=\{c_ic_{i+2}\in   E|i=1,  \hdots,   |C|\}$.  The   $2-$chorded  cycle
inequality induced by  a given cycle $C$ of length at  least 5 and its
corresponding $\overline C$ is defined as
\begin{equation}
  \label{eq:tc}
  x(E(C))-x(E(\overline C))\leq \lfloor \frac{1}{2}|C|\rfloor.
\end{equation}

We skip the proof of the following lemma. The reader can refer
to~\cite{grotschel1990facets} for further details.

\begin{lemma}
  The $2-$chorded cycle inequality  (\ref{eq:tc}) induced by a cycle $C$
  of length at least $5$ is valid for $P_{n,K}$. The corresponding face
  $F_C$ is not facet-defining if $|C|$ is even.
\end{lemma}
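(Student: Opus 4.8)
The plan is to prove the two assertions in turn, working throughout with the integer points of $P_{n,K}$ and then invoking that $P_{n,K}$ is their convex hull. (Alternatively one can simply note that the projection of any feasible integer point onto its edge coordinates is the incidence vector of a partition of $V$, hence lies in the clique partitioning polytope, where~(\ref{eq:tc}) is classically valid and facet-defining exactly for odd $|C|$~\cite{grotschel1990facets}; the self-contained argument below makes this precise for $P_{n,K}$.)

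\emph{Validity.} Although $(P_2)$ keeps only one orientation of each triangle inequality, the characteristic vector $x^\pi$ of any $K$-partition satisfies \emph{every} orientation of \emph{every} triangle inequality, since a triangle constraint holds regardless of how its three vertices are distributed among the clusters. Hence I would sum, over $i=1,\dots,|C|$ with indices mod $|C|$, the $|C|$ triangle inequalities $x_{c_i,c_{i+1}}+x_{c_{i+1},c_{i+2}}-x_{c_i,c_{i+2}}\le 1$ attached to the consecutive triples $\{c_i,c_{i+1},c_{i+2}\}$ of $C$. Every cycle edge $c_jc_{j+1}$ occurs in exactly two of these inequalities (in a positive slot each time) and every $2$-chord $c_jc_{j+2}$ occurs in exactly one, with a minus sign, so the sum reads
\[
  2\,x(E(C))-x(E(\overline C))\le |C| .
\]
Adding the valid inequality $x(E(\overline C))\ge 0$ and dividing by $2$ gives $x(E(C))-x(E(\overline C))\le\frac12|C|$ at every integer point of $P_{n,K}$; this is exactly $\lfloor\frac12|C|\rfloor$ when $|C|$ is even, and it forces the integer-valued left-hand side down to $\lfloor\frac12|C|\rfloor$ when $|C|$ is odd. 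By convexity, (\ref{eq:tc}) is valid for $P_{n,K}$.

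\emph{Non-facetness when $|C|$ is even.} Write $|C|=2k$, so the right-hand side of~(\ref{eq:tc}) equals $k$. Let $x^\pi$ be any integer point of the face $F_C$, i.e.\ with $x^\pi(E(C))-x^\pi(E(\overline C))=k$. Substituting the face equation into the aggregated bound $2x^\pi(E(C))-x^\pi(E(\overline C))\le 2k$ gives $x^\pi(E(C))\le k$, and then the face equation yields $x^\pi(E(\overline C))=x^\pi(E(C))-k\le 0$; since the $2$-chord variables are nonnegative we conclude $x^\pi(E(\overline C))=0$ and $x^\pi(E(C))=k$. Thus every vertex of $P_{n,K}$ lying on $F_C$ satisfies the additional equation $x(E(\overline C))=0$, and hence so does the whole face $F_C$ (being their convex hull). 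Consequently $F_C$ is contained in the intersection of the two distinct hyperplanes $\{x(E(C))-x(E(\overline C))=k\}$ and $\{x(E(\overline C))=0\}$, whose coefficient vectors are linearly independent (the first puts $+1$ on cycle edges and $-1$ on $2$-chords, the second puts $0$ on cycle edges and $+1$ on $2$-chords). Since $P_{n,K}$ is full-dimensional by Theorem~\ref{th:dim}, this forces $\dim F_C\le \dim P_{n,K}-2$, so $F_C$ cannot be a facet.

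\emph{Where the work is.} The argument is essentially bookkeeping. The one subtlety worth flagging is that $2x(E(C))-x(E(\overline C))\le|C|$ is not literally a nonnegative combination of the inequalities of $(P_2)$, because $(P_2)$ retains only one orientation of each triangle; the reasoning must therefore be carried out on the integer feasible points, where all orientations of the triangle inequality hold, and only then transferred to $P_{n,K}$ by taking convex hulls.
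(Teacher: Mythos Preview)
Your argument is correct. The paper actually omits this proof entirely, deferring to~\cite{grotschel1990facets}; your write-up is precisely the classical Chv\'atal--Gomory derivation given there (sum the $|C|$ triangle inequalities around the cycle, add $-x(E(\overline C))\le 0$, halve, and round), together with the standard observation that for even $|C|$ equality in~(\ref{eq:tc}) forces the two independent equations $x(E(C))=|C|/2$ and $x(E(\overline C))=0$. Your remark that the aggregation must be carried out on integer points (where all triangle orientations hold) rather than literally inside the inequality system of $(P_2)$ is a nice touch and does no harm, since $P_{n,K}$ is by definition the convex hull of those points. The only mild caveat is that your final dimension count invokes full-dimensionality via Theorem~\ref{th:dim}; the lemma as stated does not assume $K\in\{3,\dots,n-2\}$, but this is the only regime the paper cares about (and in any case one can replace that step by exhibiting a $K$-partition with $x(E(\overline C))>0$ to see the extra equation is nontrivial on $P_{n,K}$).
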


%%%
% We Know that p <= n-K-1
% If p >=  n-K+1 -> n-K+1 edges of the cycle  activated -> at most K-1
% clusters
% If  p = n-K ->  exactly p edges of  the cycle activated  (if more at
% most  K-1 cluster, if less the  TCC equation is not  satisfied) -> 0
% edges of the $2-$chord activated -> sum x(\overline C) = 0
\begin{theorem}
The face $F_C$ induced by an odd cycle $C$ of size
 $2p+1$  is facet-defining  if the  following conditions
 are satisfied:
  \begin{enumerate}[(i)]
  \item $P_{n,K}$ is full-dimensional;
  \item $|U\cap\{1,2,3\}|\geq 2$;
  \item $2\leq p\leq n-K-|U\cap\{1,2,3\}|$;
  \item $K \geq 4$.
  \end{enumerate}

\end{theorem}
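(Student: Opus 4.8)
The plan is the standard one used for the trivial facets and for Theorem~\ref{th:dim}: assume $F_C$ lies in a hyperplane $\{x\mid\alpha^Tx=\alpha_0\}$ coming from a facet-defining inequality $\alpha^Tx\le\alpha_0$, and prove that $(\alpha,\alpha_0)$ is proportional to $(\omega,\omega_0)$, where $\omega$ has entry $1$ on the edges of $E(C)$, $-1$ on those of $E(\overline C)$ and $0$ on everything else (in particular on every representative coordinate), and $\omega_0=p$. By (ii) at most one of $1,2,3$ is a cycle vertex; I write the proof for the case $1,2,3\in U$, the remaining case only changing which of $x_{1,2},x_{1,3},x_{2,3}$ gets treated as a $V_C$--$U$ edge. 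The engine of the proof is a rich family of root partitions: I put the cycle vertices in a configuration attaining $x(E(C))-x(E(\overline C))=p$, namely $p$ activated ``dominoes'' $\{c_i,c_{i+1}\}$ covering $2p$ of them with one cycle vertex left isolated, the dominoes grouped into at most four clusters (a few well-separated classes, possible since $K\ge4$) so that no $2$-chord is activated, while the vertices of $U$ are spread into the remaining clusters so that there are exactly $K$ clusters altogether; by (iii)--(iv) this is possible with room to spare, enough to move $U$-vertices freely between clusters and to split any $U$-cluster of size $\ge2$. The crucial observation is that a transformation $\mathcal T$ on two clusters that moves only $U$-vertices, or moves a cycle vertex that is isolated among the cycle vertices of its cluster, changes no activated cycle edge or $2$-chord, hence sends roots to roots.

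Restricting to such transformations, the first part of the proof of Theorem~\ref{th:dim} applies verbatim inside the $U$-parts of these roots, so Lemmas~\ref{lemma1},~\ref{lemma2},~\ref{lemma4},~\ref{lemma5} give $\alpha_{i,j}=0$ for every edge $ij$ with both ends in $U$ and $ij\notin\{12,13,23\}$, $\alpha_{1,2}=\alpha_{1,3}=\alpha_{2,3}\zdef\beta$, and $\alpha_s=-2\beta$ for the representative coordinates $s$; then Lemma~\ref{lemma4} with three distinct $U$-clusters --- available because $K\ge4$ --- forces $\beta=0$ as at the end of Theorem~\ref{th:dim}. Thus every representative coefficient, and every edge coefficient internal to $U$, is $0$.

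Next I kill the coefficients of edges meeting $V_C$ that are neither cycle edges nor $2$-chords. For an edge $c_iu$ with $u\in U$, take a root in which $c_i$ is isolated among the cycle vertices of its cluster and apply Lemma~\ref{lemma1} (or~\ref{lemma2}) to the transformation moving $u$ between that cluster and a cluster containing $1$ (or $2$): this preserves the cycle configuration, so all partitions involved are roots, and since every other coefficient in the resulting relation is already $0$ one reads off $\alpha_{c_i,u}=0$. For a ``long chord'' $c_ic_j$ (both ends on the cycle, not a cycle edge, not a $2$-chord), a root in which $c_i$ and $c_j$ can be joined or separated while the other $2p-1$ cycle vertices sit in a subconfiguration still realising the value $p$, together with a Lemma~\ref{lemma1}-type relation, gives $\alpha_{c_i,c_j}=0$.

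It remains to treat $E(C)$ and $E(\overline C)$, which is the crux. To compare two consecutive cycle edges, take the root in which $\{c_{j-1},c_j\}$ is an activated domino inside a cluster containing $1$ and $c_{j+1}$ sits isolated (among cycle vertices) inside a cluster containing $2$, and its image under $\mathcal T$ that exchanges $\{c_{j-1},c_{j+1}\}$ between these two clusters: this replaces the activated edge $c_{j-1}c_j$ by $c_jc_{j+1}$, activates no $2$-chord, changes no representative (since $c_{j-1},c_{j+1}\ge4$ do not undercut $1$ or $2$), and touches no nonzero coefficient, so $\alpha^Tx$ being constant on $F_C$ forces $\alpha_{c_{j-1},c_j}=\alpha_{c_j,c_{j+1}}$; running once around $C$ gives $\alpha_e=\lambda$ for all $e\in E(C)$. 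For a $2$-chord $c_ac_{a+2}$, take the root with $\{c_a,c_{a+1}\}$ an activated domino in a cluster containing $1$ and $c_{a+2}$ isolated in a cluster containing $2$, and its image under $\mathcal T$ moving $c_{a+2}$ into the first cluster: this simultaneously activates the cycle edge $c_{a+1}c_{a+2}$ and the $2$-chord $c_ac_{a+2}$, so the value stays $p$ (still a root) and, after the same cancellations, $\lambda+\alpha_{c_a,c_{a+2}}=0$, i.e. $\alpha_{c_a,c_{a+2}}=-\lambda$. Evaluating $\alpha^Tx^\pi$ on any root finally gives $\alpha_0=\lambda p=\lambda\lfloor|C|/2\rfloor$, so $(\alpha,\alpha_0)=\lambda(\omega,\omega_0)$ and $F_C$ is a facet. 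The main obstacle is the bookkeeping in this last step: for \emph{every} cycle edge and \emph{every} $2$-chord one must exhibit actual $K$-partitions in $F_C$ differing by exactly one such local move and check that the move keeps the number of clusters and the set of representatives fixed --- precisely where the slack in (iii) (spare $U$-vertices to reabsorb) and the extra cluster from (iv) are used, and where $|U\cap\{1,2,3\}|\ge2$ is needed to keep the substituted coordinates $x_{1,2},x_{1,3},x_{2,3}$ out of the cycle.
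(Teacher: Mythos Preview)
Your overall architecture is sound and close to the paper's, but there is a genuine gap in the treatment of the ``long chords'' $c_ic_j$ with $3\le|i-j|\le 2p-2$. You write that one can take ``a root in which $c_i$ and $c_j$ can be joined or separated while the other $2p-1$ cycle vertices sit in a subconfiguration still realising the value $p$''. No such root exists. If $S_k=C_k\cap V_C$, then the contribution of cluster $C_k$ to $x(E(C))-x(E(\overline C))$ is at most $\lfloor|S_k|/2\rfloor$, and since $\sum_k|S_k|=2p+1$ is odd, equality $\sum_k\lfloor|S_k|/2\rfloor=p$ forces \emph{exactly one} $|S_k|$ to be odd and every cluster to attain its bound. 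In particular a root has at most one cycle vertex that is isolated among the cycle vertices of its cluster; you cannot free up two non-adjacent cycle vertices simultaneously, so a direct Lemma~\ref{lemma1} move giving $\alpha_{c_i,c_j}=0$ is unavailable.

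The paper gets around this with a parity argument you are missing. Using roots in which only \emph{one} cycle vertex $c_i$ is isolated (sitting with $u_1$), it moves $c_i$ into a domino $\{c_{i+t},c_{i+t+1}\}$ to obtain $\alpha_{c_i,c_{i+t}}+\alpha_{c_i,c_{i+t+1}}=0$, and symmetrically $\alpha_{c_{i-1},c_{i+t}}+\alpha_{c_i,c_{i+t}}=0$. These two relations make the long-chord coefficients alternate in sign along both indices; chasing once around the odd cycle forces $\alpha_{c_i,c_{i+t}}=-\alpha_{c_i,c_{i+t}}$, hence all of them vanish. This step genuinely uses $|C|$ odd (which is also why the even case fails) and cannot be shortcut by a single join/separate move. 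Your arguments for the $U$-internal coefficients, the representative coefficients (which do go through once you note the isolated cycle vertex can be any $c_i$), the $V_C$--$U$ edges, the cycle edges, and the $2$-chords are fine; only the long-chord step needs to be replaced by this alternation argument.
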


\begin{proof}
  We first highlight  $K-$partitions $\pi_i=\s{C_1,  C_2, \hdots,
    C_K}$ in $F_C$  whenever conditions (i) to (iv) are
  satisfied.

Let $c_i$ be a vertex in $V_C$. The $K-$partition $\pi_i$ is constructed as follows:
\begin{itemize}
\item The first cluster contains $c_i$, $u_1$ and $u_2$;
\item  The $2p$  remaining  vertices of  $V_C$  are scattered  in the  next
  clusters such that $p$ edges of $C$ are activated and none of $\overline
  C$. If cluster $K$ is reached the vertices are distributed in $C_{K-1}$
  and $C_K$ (see example Figure~\ref{fig:pitcc}): 
  \begin{itemize}
  \item $C_q=\s{ c_{i+2q-1},c_{i+2q}}\,$ $\forall q\in\s{2, \hdots,\min(p,K)}$;
  \item $C_{K-1}\supset \s{c_{i+K+2q-1},c_{i+K+2q}},$ for all
    $q$ odd natural number
    such that $K+2q\leq 2p;$
  \item $C_{K}\supset \s{c_{i+K+2q-1},  c_{i+K+2q}},$ for all $q$ even
    natural number
    such that $K+2q\leq 2p.$
  \end{itemize}

\item The remaining elements of $U$ are scattered in the next clusters
  or in $C_K$:
  \begin{itemize}
  \item $C_{p+q-1} = \s{u_{q}}\,$ $\forall q\in\s{3,K-p+1};$
  \item $C_K\supset\s{u_{q}}\,$ $\forall q\in\s{K-p+2, |U|}.$
  \end{itemize}
\end{itemize}

\begin{figure}[h]
  \centering
  \input{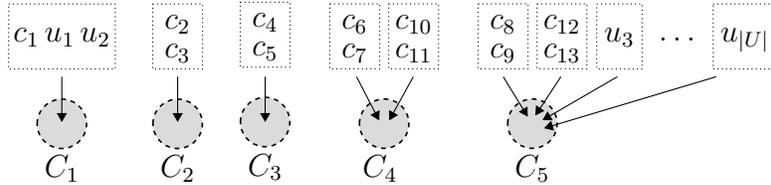}
  
  \caption{Construction of $\pi_1$ if $K=5$ and $|C|=13$.}
\label{fig:pitcc}
\end{figure}

From   this  construction  we   directly  deduce   that  $\pi_i$   is  a
$K-$partition. Since  no edge  of $\overline C$  is activated  and $p$
edges of
$C$ are, $\pi_i$ is also in $F_C$.

  % Let   $M_i$    be   the    only   perfect   matching    on   $G(V_C,
  % C\backslash\{c_{i-1}c_i,  c_ic_{i+1})$ and let  $E_i=C-M_i$. Thanks
  % to $(H_3)$ we can always find a valid $K-$partition $\pi$ of $G(V, E-E_i)$
  % such that:
  % \begin{itemize}
  % \item all the vertices in $U-\{1,2,3\}$ and $c_i$ are alone in one cluster;
  % \item $c_{i+1}$ and $c_{i+2}$ are alone in one other cluster.
  % \end{itemize}

  All the $K-$ partitions considered throughout  this proof
  can  be obtained  from $\pi$  thanks  to  valid  transformations
  (\textit{i.e.} transformations  which lead  to a $K-$partition  which is
  still in $F_C$).

Assume that $F_C$ is included in  the hyperplane induced by $\al^T x =
\al_0$. We know from condition (ii) that $u_1$ and
$u_2$ are in $\s{1,2,3}$.

We  first show  that  for  all $i$  in  $\{1,  2, \hdots,  |C|\}$:
 $\al_{\ci, \ciu}=-\al_{\ci, \cid}\zdef\beta$.  To that end we consider the 
transformations represented Figures~\ref{fig:tc1}
and~\ref{fig:tc2}.  These transformations are valid since they do not
alter  the   number  of   clusters  and  the   sum  $x(E(C))-x(E(\overline
C))$.  Through  these  transformations  no  representative  variable  is
modified since $u_1$ and $u_2$ are lower than or equal to $3$.  We obtain
$\al_{\ud,\ci}=\al_{\ci,  \ciu}+\al_{\ci,  \cid}+\al_{\uu,  \ci}$  and
$\al_{\uu,\ci}=\al_{\ci,  \ciu}+\al_{\ci,  \cid}+\al_{\ud,  \ci}$.  We
deduce from  them:
\begin{equation}
\lambda_i\zdef\al_{\uu, \ci}=\al_{\ud,  \ci}\label{eq:tc1},
\end{equation}
and
\begin{equation}
\beta_i\zdef\al_{\ci,  \ciu}=-\al_{\ci, \cid}\label{eq:tc2}.
\end{equation}

If we substitute  $\ci$ and $\cid$ in the  transformations we get that
$\beta_{i+1}=-\al_{\ci,  \cid}=\beta_i$. By  applying  similarly these
two transformations on  every possible values of $i$  we conclude that
all the $\beta_i$ are equal to a value that we denote by $\beta$.

\begin{minipage}{1.0\linewidth}
  \begin{center}
    \captionsetup{justification=centering, singlelinecheck=false }

    \begin{minipage}{0.44\linewidth}
      \centering\trtikz{$\ud$}{$\ci$}{1}{$\ciu$}{$\uu\;\cid$}{}{1.8}\\
    \end{minipage}
    \begin{minipage}{0.44\linewidth}
      \centering\trtikz{$\uu$}{$\ci$}{1}{$\ciu$}{$\ud\;\cid$}{}{1.8}\\
    \end{minipage}
    \vspace{0.2cm}

    \begin{minipage}{0.44\linewidth}
      \captionof{figure}{$\tr{\{\ci,  \ud\}}{\{\ciu,   \cid,  \uu\}  }
        {\ci}$}
      \label{fig:tc1}
    \end{minipage}
    \begin{minipage}{0.44\linewidth}
      \captionof{figure}{$\tr{\{\ci,  \uu\}}{\{\ciu,   \cid,  \ud\}  }
        {\ci}$}
      \label{fig:tc2}
    \end{minipage}
  \end{center}
\end{minipage}
We now  show that all the other coefficients  of $H$ are equal
to zero. 

The   transformations  $\tr{\{\uu,  \ci\}}{\{\ciu,   \cid\}}{\ciu}$  and
$\mathcal T(\{\uu, \ud, \ci\},\qquad$ $\{\ciu, \cid\},\s{\ciu})$ respectively show 
$\al_{\ciu}\mathbb{1}(\ciu<\cid)=\lambda_{i+1}+\al_{\cid}\mathbb{1}(\ciu<\cid)$
and
$\al_{\ciu}\mathbb{1}(\ciu<\cid)=2\lambda_{i+1}+\al_{\cid}\mathbb{1}(\ciu<\cid)$. Accordingly,
we have
\begin{equation}
\lambda_i=0.\label{eq:tc4}
\end{equation}

Therefore, 
$\al_{\ciu}\mathbb{1}(\ciu<\cid)=\al_{\cid}\mathbb{1}(\ciu<\cid)$. This enables to deduce that all
the representative variables from $V_C$  are equal to a constant that we
 call $\gamma$. The transformation $\tr{\ci}{\{\uu, \ud, \ciu, \cid\}}{\uu}$
then give $\al_{\uu, \ud} + \gamma = 0$. 

\begin{itemize}
\item If $|C\cap\{1,2,3\}|=1$ we directly have $\gamma = 0$ and thus $\al_{\uu, \ud}=0$.
\item If  $|C\cap\{1,2,3\}|=0$ there  exists $u_3$ in  $\{1,2,3\}\cap(U\backslash\{\uu, \ud\})$
  and  the   transformation  $\tr{\{\uu,  \ud,   u_3,  \ci\}}{\{\ciu,
    \cid\}}{u_1}$ yields the same result.
\end{itemize}

Let $t$ be an index in $\{3, 4, \hdots, 2p-3\}$. We now  prove that
for all $i$ in $\{1, 2, \hdots, |C|\}$:
$\al_{\ci,   c_{i+t}}=\al_{\ci,  c_{i+t+1}}=0$.   The  transformations
represented in Figure~\ref{fig:tc3} and ~\ref{fig:tc4} give 
\begin{equation}
  \label{eq:tc5}
  \al_{\ci, c_{i+t}}+\al_{\ci, c_{i+t+1}}=0,
\end{equation}
and
\begin{equation}
  \label{eq:tc6}
  \al_{\ci, c_{i+t}}+\al_{c_{i-1}, c_{i+t}}=0.
\end{equation}

\begin{minipage}{1.0\linewidth}
  \begin{center}
    \captionsetup{justification=centering, singlelinecheck=false }
    \begin{minipage}{0.46\linewidth}
      \centering\trtikz{$\uu$}{$\ci$}{1}{$c_{i+t}$}{$c_{i+t+1}$}{}{1.8}\\
    \end{minipage}
    \begin{minipage}{0.46\linewidth}
      \centering\trtikz{$\uu$}{$c_{i+t}$}{1}{$c_{i-1}$}{$\ci$}{}{1.8}\\
    \end{minipage}
    \vspace{0.2cm}

    \begin{minipage}{0.46\linewidth}
      \captionof{figure}{$\tr{\{\ci,   \uu\}}{\{c_{i+t},  c_{i+t+1}\}}
        {\ci}$}
      \label{fig:tc3}
    \end{minipage}
    \begin{minipage}{0.46\linewidth}
      \captionof{figure}{$\tr{\{c_{i+t},   \uu\}}{\{c_{i-1},  \ci\}  }
        {c_{i+t}}$}
      \label{fig:tc4}
    \end{minipage}
  \end{center}
\end{minipage}

From these two equations we conclude that there exists a variable
$\al_{odd}$ such that for each $i\in\s{1,2,\hdots, |C|}$ and
for all $t\in\{3, 4, \hdots, 2p-2\}$ 
\begin{equation}
  \label{eq:tc666}
  \al_{c_i, c_{i+t}}=\left \{
\begin{array}{rcl}
\al_{odd} & \mbox{if } t+i\mbox{ is odd}\\
-\al_{odd} & \mbox{otherwise}
\end{array}.
\right.
\end{equation}

Accordingly, we have $\al_{c_{2p-2}, c_{2p+1}}=\al_{odd}$.  However $c_{2p+1}=c_1$
since  $|C|=2p+1$.  As  a  result $\al_{c_{2p-2},  c_{2p+1}}=\al_{c_1,
  c_{2p-2}}$ and  from the fact that  $2p-2$ is even we  also get that
$\al_{c_{2p-2}, c_{2p+1}}=-\al_{odd}$. The coefficient $\al_{odd}$ is then
equal to zero.

To  finish the  proof  we show  that  if there  exists  $u$ in  $U\cap
(V\backslash\{1,2,3\})$ then  all the coefficients related  to $u$ are
equal     to    zero.      The     transformations    presented     in
Figures~\ref{fig:tc5},~\ref{fig:tc6}    and    \ref{fig:tc7}   yields,
respectively,    $\al_{\ci,    u}=0$,    $\al_{u_1,   u}=\al_u$    and
$\al_u\mathbb{1}(\ci<u)=0$.   If there exists  $\ci<u$ we  then obtain
that  $\al_u$  and $\al_{\uu,  u}$  are equal  to  zero.   If not  the
transformation $\tr{\{\ci, \uu,  \ud, u\}}{\{\ciu, \cid\}}{\uu}$ gives
the expected result.

\begin{minipage}{1.0\linewidth}
  \begin{center}
    \captionsetup{justification=centering, singlelinecheck=false }
    \begin{minipage}{0.43\linewidth}
      \centering\trtikz{$\uu\; u$}{$\ci$}{1}{$\ciu$}{$\cid$}{}{1.5}\\
    \end{minipage}
    \begin{minipage}{0.43\linewidth}
      \centering\trtikz{$u$}{$\uu, \ci$}{1}{$\ciu$}{$\cid$}{}{1.8}\\
    \end{minipage}
    \vspace{0.2cm}

    \begin{minipage}{0.43\linewidth}
      \captionof{figure}{  $\tr{\{\uu,   u,  \ci\}}  {\{\ciu,  \cid\}}
        {\ci}$}
      \label{fig:tc5}
    \end{minipage}
    \begin{minipage}{0.43\linewidth}
      \captionof{figure}{$\tr{\{\uu,  u, \ci\}}{\{\ciu, \cid\}}{\{\uu,
          \ci\}} $}
      \label{fig:tc6}
    \end{minipage}
    \begin{minipage}{0.43\linewidth}
      \centering\trtikz{$u$}{$\ci$}{1}{$\ciu$}{$\cid$}{}{1.8}\\
    \end{minipage}
    \vspace{0.2cm}

    \begin{minipage}{0.43\linewidth}
      \captionof{figure}{ $\tr{\{\ci, u\}}{\{\ciu, \cid\}}{\ci}$}
      \label{fig:tc7}
    \end{minipage}
  \end{center}
\end{minipage}

\end{proof}

\section{2-Partition inequalities}
\label{sec:2part}

This   section  is  dedicated   to  the   study  of   the  2-partition
inequalities, first  introduced in \cite{grotschel1990facets}  for the
general clique partitioning problem. For two disjoint nonempty subsets $S$ and $T$ of $V$ are defined as
\begin{equation}
  \label{eq:st}
  x(E(S),E(T))-x(E(S))-x(E(T))\leq \min(|S|,|T|).
\end{equation}

Let $F_{S,T}$  be the  face of
$P_{n,K}$ defined
by equation~\ref{eq:st}. 

The proof of the three following lemmas is skipped. For further details the reader may refer
to~\cite{grotschel1990facets} for Lemmas~\ref{le:st0} and~\ref{le:st1}
and  to
~\cite{deza1992clique} for Lemma~\ref{le:st2}.

\begin{lemma}
  Inequality~\eqref{eq:st} is valid for $P_{n,K}$.
\label{le:st0}
\end{lemma}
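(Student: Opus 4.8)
The plan is to verify~\eqref{eq:st} directly at the vertices of $P_{n,K}$: since $P_{n,K}=conv\s{x^\pi}$, it suffices to show that the incidence vector $x^\pi$ of an arbitrary $K$-partition $\pi=\s{C_1,\dots,C_K}$ satisfies it. Observe that~\eqref{eq:st} involves only edge variables and does not mention $K$, so what is really being checked is that the $2$-partition inequality holds for the edge-incidence vector of \emph{any} partition of $V$; this is the classical inequality of~\cite{grotschel1990facets} for clique partitioning, and the argument below is the usual one.

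First I would split the left-hand side cluster by cluster. Writing $s_m=|C_m\cap S|$ and $t_m=|C_m\cap T|$, and using that an edge is activated in $x^\pi$ exactly when its two endpoints lie in a common cluster, one obtains
\[
  x(E(S),E(T))-x(E(S))-x(E(T))=\sum_{m=1}^{K}\Bigl(s_m t_m-\binom{s_m}{2}-\binom{t_m}{2}\Bigr).
\]
The single computational ingredient is the identity $ab-\binom a2-\binom b2=\tfrac12\bigl((a+b)-(a-b)^2\bigr)$, which for nonnegative integers $a,b$ yields $ab-\binom a2-\binom b2\le\min(a,b)$ (equivalently $|a-b|\le(a-b)^2$, true for every integer difference). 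Applying it to each cluster and summing, one concludes with $\sum_{m}\min(s_m,t_m)\le\min\bigl(\sum_m s_m,\sum_m t_m\bigr)=\min(|S|,|T|)$, using that $\sum_m s_m=|S|$ and $\sum_m t_m=|T|$.

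There is no real obstacle here --- which is precisely why the paper quotes the result from the literature rather than reproving it. The only point worth flagging is that the per-cluster bound $|a-b|\le(a-b)^2$ relies on $a$ and $b$ being integers (it fails for a real difference in the open interval $(0,1)$); this causes no trouble, since the estimate is needed only at genuine partitions, i.e.\ at the integer points whose convex hull is $P_{n,K}$.
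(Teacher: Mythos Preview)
Your proof is correct. The paper itself skips this proof entirely and simply refers to~\cite{grotschel1990facets}; the cluster-by-cluster verification you give, reducing to the elementary integer inequality $|a-b|\le(a-b)^2$, is exactly the standard argument from that reference.
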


\begin{lemma}
  If $|S|=|T|$ $F_{S,T}$, is not a facet of $P_{n,K}$.
\label{le:st1}
\end{lemma}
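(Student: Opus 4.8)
The plan is to exhibit a valid linear equation that holds on all of $F_{S,T}$ but is independent of the equation defining $F_{S,T}$ via~\eqref{eq:st}; this forces $F_{S,T}$ to have codimension at least two in (the full-dimensional) $P_{n,K}$, hence it cannot be a facet. If $F_{S,T}=\emptyset$ the statement is trivial, so I assume it is non-empty.

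First I would dispose of the case $|S|=|T|=1$, where~\eqref{eq:st} is merely the edge bound $x_{s_1,t_1}\le 1$: this is not facet-defining, being implied by the two triangle inequalities $x_{s_1,t_1}+x_{s_1,w}-x_{t_1,w}\le 1$ and $x_{s_1,t_1}+x_{t_1,w}-x_{s_1,w}\le 1$ for any $w\notin\{s_1,t_1\}$ (cf.\ Section~\ref{sec:trivial}).

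Now assume $|S|=|T|=m\ge 2$. The core step is a characterization of equality in~\eqref{eq:st}. For a $K$-partition $\pi$ with cluster set $\{C\}$, put $a_C=|C\cap S|$ and $b_C=|C\cap T|$; counting the activated edges inside each cluster, the left-hand side of~\eqref{eq:st} evaluated at $x^\pi$ equals
\[
\sum_{C}\Bigl(a_Cb_C-\binom{a_C}{2}-\binom{b_C}{2}\Bigr)=\frac{|S|+|T|}{2}-\frac12\sum_{C}(a_C-b_C)^2=m-\frac12\sum_{C}(a_C-b_C)^2,
\]
using $ab-\binom{a}{2}-\binom{b}{2}=\tfrac12\bigl((a+b)-(a-b)^2\bigr)$. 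Hence $x^\pi\in F_{S,T}$ if and only if $a_C=b_C$ for every cluster $C$, and in that case $\sum_C\binom{a_C}{2}=\sum_C\binom{b_C}{2}$, i.e.\ $x^\pi(E(S))=x^\pi(E(T))$. Since the vertices of the face $F_{S,T}$ are exactly the characteristic vectors $x^\pi$ it contains, every point of $F_{S,T}$ satisfies the equation $x(E(S))-x(E(T))=0$.

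It then remains to check that this equation is independent of the one from~\eqref{eq:st}: the normal of the hyperplane defined by~\eqref{eq:st} has strictly positive coefficients on the (non-empty) set of $S$–$T$ edges, whereas the normal of $\{x:\,x(E(S))-x(E(T))=0\}$ vanishes there; moreover this last set is a genuine hyperplane precisely because $m\ge 2$ makes $E(S)\cup E(T)\ne\emptyset$. Therefore $F_{S,T}$ lies in the intersection of two distinct hyperplanes, so $\dim F_{S,T}\le\dim P_{n,K}-2$ and $F_{S,T}$ is not a facet. The only delicate point is the counting identity and confirming that the implied equation $x(E(S))=x(E(T))$ is not itself a consequence of~\eqref{eq:st}; this is exactly where the hypothesis $|S|=|T|\ge 2$ is used, which is also why the case $m=1$ has to be treated separately.
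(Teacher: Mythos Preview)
Your argument is correct. The paper itself does not prove this lemma; it simply refers the reader to Gr\"otschel and Wakabayashi~\cite{grotschel1990facets}, so there is no in-paper proof to compare against directly.

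Your approach is in fact the standard one for this result. The counting identity
\[
a_Cb_C-\binom{a_C}{2}-\binom{b_C}{2}=\tfrac12\bigl((a_C+b_C)-(a_C-b_C)^2\bigr)
\]
is exactly what underlies Lemma~\ref{le:st2} as well, and your deduction that tightness forces $a_C=b_C$ in every cluster, hence $x(E(S))=x(E(T))$, is the usual way to exhibit a second independent supporting hyperplane. The independence check is clean: the normal of~\eqref{eq:st} is nonzero on the $S$--$T$ edges while that of $x(E(S))-x(E(T))=0$ vanishes there, and the latter is a genuine hyperplane since $m\ge2$ guarantees $E(S)\cup E(T)\neq\emptyset$. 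Handling $m=1$ separately via the remark on $x_{u,v}\le1$ in Section~\ref{sec:trivial} is also appropriate. Nothing is missing.
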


\begin{lemma}
  Given two disjoint subsets $S$ and $T$ of $V$ such that $|S|<|T|$.  A
  $K-$partition  $\pi=\{C_1,  C_2,   \hdots,  C_K\}$  is  included  in
  $F_{S,T}$ if and only if for  all $i\in\{1, 2, \hdots, K\}$\; $|T\cap C_i|
    - |S\cap C_i| \in\{0,1\}$
\label{le:st2}
\end{lemma}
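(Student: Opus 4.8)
The plan is to evaluate the left-hand side of~\eqref{eq:st} on a characteristic vector cluster by cluster. Given a $K$-partition $\pi=\{C_1,\dots,C_K\}$, for each $i$ set $s_i\zdef|S\cap C_i|$ and $t_i\zdef|T\cap C_i|$, so that $\sum_i s_i=|S|$ and $\sum_i t_i=|T|$. Since $G$ is complete and $x^\pi_{u,v}=1$ exactly when $u$ and $v$ lie in a common cluster, the activated edges inside $S$, inside $T$, and between $S$ and $T$ are precisely the pairs of the appropriate kind that sit in one of the $C_i$. Hence, evaluated at $x^\pi$,
\[
x(E(S),E(T))-x(E(S))-x(E(T))=\sum_{i=1}^{K}g(s_i,t_i),\qquad g(s,t)\zdef st-\binom{s}{2}-\binom{t}{2}.
\]

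Next I would use the elementary identity $2\bigl(s-g(s,t)\bigr)=(s-t)^2+(s-t)=(t-s)(t-s-1)$, valid for all reals. When $s$ and $t$ are nonnegative integers the right-hand side is a product of two consecutive integers, hence nonnegative, and it is zero if and only if $t-s\in\{0,1\}$. Therefore $g(s_i,t_i)\le s_i$ for every $i$, and summing gives $\sum_i g(s_i,t_i)\le\sum_i s_i=|S|=\min(|S|,|T|)$; this incidentally reproves the validity asserted in Lemma~\ref{le:st0}. Moreover, equality holds in~\eqref{eq:st} for $x^\pi$ if and only if $g(s_i,t_i)=s_i$ for every $i$, that is, if and only if $t_i-s_i\in\{0,1\}$ for every $i$. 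This is exactly the claimed description of $F_{S,T}$, and it yields both directions of the equivalence at once.

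I do not expect a genuine obstacle; the one point to handle with a little care is clusters meeting only one of $S$ and $T$. A cluster with $t_i=0$ contributes $g(s_i,0)=-\binom{s_i}{2}$, which is $\le s_i$ with equality iff $s_i\in\{0,1\}$, and a cluster with $s_i=0$ contributes $-\binom{t_i}{2}\le0$ with equality iff $t_i\in\{0,1\}$; both are consistent with $t_i-s_i\in\{0,1\}$ and are already covered by the general identity, so no separate case analysis is needed. Finally, the hypothesis $|S|<|T|$ is used only to replace $\min(|S|,|T|)$ by $|S|$, so $|S|\le|T|$ would in fact be enough for this lemma.
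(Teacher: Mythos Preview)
Your argument is correct. The paper itself does not supply a proof of this lemma; it simply refers the reader to \cite{deza1992clique}. The cluster-by-cluster decomposition together with the identity $2\bigl(s-g(s,t)\bigr)=(t-s)(t-s-1)$ is the standard route to this characterization, so there is nothing substantive to contrast.

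One small slip in your parenthetical about degenerate clusters: when $t_i=0$, equality $g(s_i,0)=s_i$ holds only for $s_i=0$, not for $s_i\in\{0,1\}$ (at $s_i=1$ one has $g=0<1=s_i$). This is harmless, since as you yourself point out the general identity already covers every case uniformly, and the condition $t_i-s_i\in\{0,1\}$ with $t_i=0$ indeed forces $s_i=0$.
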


Let $U=\{u_1,  u_2, \hdots, u_{|U|}\}$ be  the set of  vertices defined by
$V\backslash (S\cup T)$ such that  $u_1<u_2<\hdots
<u_{|U|}$.  The elements of $S$ and $T$ -  $\{s_1, s_2, \hdots,
s_{|S|}\}$ and $\{t_1, t_2, \hdots, t_{|T|}\}$ - are similarly sorted.

\begin{theorem}
  If     $P_{n,K}$      is     full-dimensional,     the     2-partition
  inequality~(\ref{eq:st}) is facet-defining  for two non empty disjoint
  subsets $S$ and  $T$ of $V$ if and only  if the following conditions
  are satisfied:
  \begin{enumerate}[(i)]
  \item $|T|-|S|\in\{1, 2, \hdots, K-1\}$;
  \item $|S|\leq n-(K+2)$; 
  \item $\forall s\in S\,$ $\exists t\in T,\; t>s$;
  \item if $|S|=1\,$ $\exists u\in U \cap\{1,2,3\}$.
  \end{enumerate}
\label{th:st}  
\end{theorem}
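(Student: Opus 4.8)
Since \eqref{eq:st} is unchanged under exchanging $S$ and $T$, I may assume $|S|\le|T|$, so $\min(|S|,|T|)=|S|$; put $\delta:=|T|-|S|$. Following the scheme announced after Theorem~\ref{th:dim}, for the ``if'' direction I will assume $F_{S,T}$ is contained in a hyperplane $H=\{x:\alpha^Tx=\alpha_0\}$ with $\alpha^Tx\le\alpha_0$ valid, and show that $(\alpha,\alpha_0)$ is proportional to the coefficient vector $(\omega,\omega_0)$ of \eqref{eq:st} --- where $\omega_{s,t}=1$ for $s\in S,\,t\in T$, $\omega_{s,s'}=\omega_{t,t'}=-1$ for two vertices inside $S$ resp.\ inside $T$, all other entries of $\omega$ are $0$ and $\omega_0=|S|$ --- so that $F_{S,T}$ is a facet by Theorem~3.6 of \cite{nemhauser1988integer}. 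For the ``only if'' direction I will show that dropping any one of (i)--(iv) forces $F_{S,T}$ into a second hyperplane.

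\textbf{Necessity.} The case $|S|=|T|$ of (i) is Lemma~\ref{le:st1}; and if $\delta\ge K$ then \eqref{eq:st} is never tight on $P_{n,K}$ when $\delta>K$ (by Lemma~\ref{le:st2}), while for $\delta=K$ Lemma~\ref{le:st2} forces every cluster of every $\pi\in F_{S,T}$ to have excess exactly $1$, and a counting argument then shows $F_{S,T}$ sits in an additional hyperplane; hence (i) is necessary. If some $s\in S$ has $s>t$ for every $t\in T$, then on $F_{S,T}$ the vertex $s$ always shares a cluster with a strictly smaller $T$-vertex, hence is never a representative, so $F_{S,T}\subseteq\{x_s=0\}$ when $s\ge4$ (and $F_{S,T}$ lies in the hyperplane coming from the expression substituted for $x_s$ when $s\le3$); hence (iii). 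If $|S|=1$ and $\{1,2,3\}\subseteq S\cup T$, then at least two of $1,2,3$ lie in $T$ and, exactly as in the proof for the upper-representative inequalities, one exhibits a hyperplane through $F_{S,T}$; hence (iv). Finally (ii) is necessary because if $|S|>n-(K+2)$ there are too few vertices outside $S$ for $F_{S,T}$ to reach dimension $\dim(P_{n,K})-1$, and summing the cluster constraints of Lemma~\ref{le:st2} over the $K$ clusters again produces a further hyperplane through $F_{S,T}$.

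\textbf{Sufficiency.} Assume (i)--(iv). By Lemma~\ref{le:st2}, a $K$-partition lies in $F_{S,T}$ iff every cluster has excess $|T\cap C_i|-|S\cap C_i|\in\{0,1\}$, so precisely $\delta$ clusters have excess $1$. Using (i) and (ii) I build a base partition $\pi\in F_{S,T}$ pairing the $|S|$ vertices of $S$ with $|S|$ vertices of $T$, housing the remaining $\delta$ vertices of $T$ in suitable excess-$1$ clusters, parking the vertices of $U$, with exactly $K$ nonempty clusters and at least two clusters disjoint from $S\cup T$ --- the ``spare'' clusters that the ``$+2$'' in (ii) guarantees. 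Every further partition is reached from $\pi$ by a transformation $\mathcal T(C_1,C_2,R)$ that keeps the excess of each touched cluster in $\{0,1\}$ and fixes the number of clusters, hence stays in $F_{S,T}$. Taking first $R\subseteq U$ (which changes neither $x(E(S),E(T))-x(E(S))-x(E(T))$ nor the cluster count, so is always admissible) and rerunning the arguments of Lemmas~\ref{lemma1},~\ref{lemma2},~\ref{lemma4} and~\ref{lemma5} with $U$-vertices in the role of the moved vertices yields $\alpha_{u,u'}=0$ for $u,u'\in U$, $\alpha_{u,w}=0$ for $u\in U$ and $w\in S\cup T$, and $\alpha_v=0$ for every representative variable $v\ge4$; condition (iv) is used here precisely to guarantee a low vertex of $U$ (one of $1,2,3$) when $|S|=1$, so that these one-pair-of-clusters moves leave all representative variables unchanged. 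Then transformations swapping an $s\in S$ against a $t\in T$, or moving a paired block $\{s,t\}$, between two clusters (checking each time that both new clusters keep excess in $\{0,1\}$) relate the coefficients $\alpha_{s,t},\alpha_{s,s'},\alpha_{t,t'}$; iterating over all choices forces all $\alpha_{s,t}$ to a common value $\rho$, all $\alpha_{s,s'}$ and all $\alpha_{t,t'}$ each to a common value, and evaluating $\alpha^Tx=\alpha_0$ on $\pi$ together with one of its modifications pins down $\alpha_{s,s'}=\alpha_{t,t'}=-\rho$ and $\alpha_0=\rho|S|$. Thus $(\alpha,\alpha_0)=\rho(\omega,\omega_0)$ and $F_{S,T}$ is a facet; specialising to $|S|=1$, $|T|=2$ proves Theorem~\ref{th:triangle}.

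\textbf{Main obstacle.} The delicate part is not any single computation but the bookkeeping: each transformation must be verified both to stay inside $F_{S,T}$ (the excess of every touched cluster must remain in $\{0,1\}$, by Lemma~\ref{le:st2}) and to leave the representative variables controlled. Because vertices $1$, $2$, $3$ carry no representative variable in the reduced formulation, moving them perturbs $x_{1,2},x_{1,3},x_{2,3}$ and must be tracked with care --- this is exactly what conditions (iii) and (iv) are there to tame --- while condition (ii) is what makes the spare clusters available so that the whole required family of partitions in $F_{S,T}$ can be exhibited.
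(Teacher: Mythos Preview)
Your overall strategy matches the paper's --- necessity by exhibiting extra hyperplanes, sufficiency by the $(\alpha,\alpha_0)=\rho(\omega,\omega_0)$ method via transformations --- but the sufficiency argument as written has real gaps, not merely missing details.

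First, your construction of the base partition relies on ``at least two clusters disjoint from $S\cup T$ --- the `spare' clusters that the `$+2$' in (ii) guarantees.'' This is false when $U=V\setminus(S\cup T)=\emptyset$, which is a perfectly legal configuration (conditions (i) and (ii) then force $|S|\ge3$, $|T|\ge4$, but say nothing about $U$). The paper devotes an entire case (Case~2: $|S|\ge2$, $|U|=0$) to this situation, where there are \emph{no} $U$-vertices to park and the transformations must move only $S$- and $T$-vertices while preserving the excess condition of Lemma~\ref{le:st2}. Your sentence ``Taking first $R\subseteq U$\ldots'' is vacuous here, and the later sentence about swapping $s$ against $t$ does not by itself pin down all coefficients.

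Second, the claim that the $U$-moves plus Lemmas~\ref{lemma1}--\ref{lemma5} yield $\alpha_v=0$ for every representative variable and $\alpha_{u,w}=0$ for all $u\in U$, $w\in S\cup T$ is where the real work lies, and it is not a matter of ``rerunning'' those lemmas. The paper isolates this as a separate lemma (Lemma~\ref{le:uv}) whose proof splits into four cases according to which pair among $\{t_1,t_2\}$, $\{s_1,s_2\}$, $\{u_1,u_2\}$, or $\{s_1,t_1,u_1\}$ lands in $\{1,2,3\}$; each case needs different transformations because moving a vertex of index $\le3$ does not change any representative variable, whereas moving a vertex of index $\ge4$ may. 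Condition~(iv) handles only the $|S|=1$ subcase; when $|S|\ge2$ and, say, $\{s_1,s_2\}=\{1,2\}$ or $\{t_1,t_2\}=\{1,2\}$, there may be no low $U$-vertex at all and your argument as stated does not go through. Similarly, showing $\alpha_v=0$ for $v\in S\cup T$ with $v\ge4$ is done in the paper by yet another case split (Cases~1--3 of the main proof) and is not a consequence of the $U$-moves.

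Finally, your necessity argument for (ii) (``summing the cluster constraints\ldots produces a further hyperplane'') does not work as stated: summing $|T\cap C_i|-|S\cap C_i|\in\{0,1\}$ over $i$ gives only $|T|-|S|\le K$, which you have already used for (i). The paper instead treats the two boundary values $K=n-|S|$ and $K=n-|S|-1$ separately, exhibiting in each case an explicit linear equation satisfied on $F_{S,T}$ that is independent of \eqref{eq:st}.
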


 To ensure that the each transformation used throughout the proof of this
 theorem leads from one
$K-$partition   in  $F_{S,T}$  to   another,  we   present  sufficient
conditions on the two clusters involved.

% To that end, we consider several
% transformations  which  lead  from  one $k-$partition  in  $F_{S,T}$  to
% another. 

% Such transformations involves two  clusters $C_1$ and $C_2$.

% that such a transformation be the two clusters involved in a given
% transformation.  The following lemma states sufficient condititions to ensure that there exists a $k-$partition in
% $F_{S,T}$ which 
% contains $C_1$ and $C_2$.

\begin{lemma}
Let $C_1$  and $C_2$ be  two disjoint subsets  of $V$. There  exists a
$K-$partition in $F_{S,T}$ which contains $C_1$ and $C_2$ if
  \begin{enumerate}[(i)]
  \item $|C_1\cap T| - | C_1\cap S| = 0$;
  \item $|C_2\cap T| - | C_2\cap S| = 1$;
  \item $|(C_1\cup C_2)\cap(T\cup U)|\leq 4$
  \item $S$ and $T$ satisfy the conditions of Theorem~\ref{th:st}.
  \end{enumerate}
\end{lemma}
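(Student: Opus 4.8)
The plan is to convert membership in $F_{S,T}$ into the local counting criterion of Lemma~\ref{le:st2} and then to build explicitly the $K-2$ clusters which, together with $C_1$ and $C_2$, form the required $K$-partition. For a set $C\subseteq V$ call $|T\cap C|-|S\cap C|$ its \emph{excess}. Condition (i) of Theorem~\ref{th:st} gives $|S|<|T|$, so Lemma~\ref{le:st2} applies: a $K$-partition $\pi=\{C_1,C_2,C_3,\dots,C_K\}$ lies in $F_{S,T}$ as soon as every one of its clusters has excess $0$ or $1$. Hypotheses (i) and (ii) say this already holds for $C_1$ and $C_2$ (and $C_2\neq\emptyset$ because its excess is $1$, while $C_1$ is non-empty by assumption), so it remains only to partition $R:=V\setminus(C_1\cup C_2)$ into $K-2$ non-empty sets each of excess $0$ or $1$.

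Before constructing them I would record the bookkeeping. The excesses of $C_1,C_2$ sum to $1$, so the excesses of the $K-2$ new clusters must sum to $d:=|T|-|S|-1$; hence exactly $d$ of them have excess $1$, and $0\le d\le K-2$ by condition (i) of Theorem~\ref{th:st}. Writing $a:=|R\cap S|$, the same cancellation gives $|R\cap T|=a+d$, so $|R\cap(T\cup U)|=a+d+|R\cap U|$. The one inequality that is not immediate is $|R\cap(T\cup U)|\ge K-2$; it follows from hypothesis (iii) of the lemma and condition (ii) of Theorem~\ref{th:st}, since $|T\cup U|=n-|S|\ge K+2$ while $|(C_1\cup C_2)\cap(T\cup U)|\le 4$.

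The construction then has three steps. (1) Form $d$ clusters, each a distinct singleton $\{t\}$ with $t\in R\cap T$ (possible because $|R\cap T|=a+d\ge d$); each has excess $1$. (2) Match the remaining $a$ vertices of $R\cap T$ with the $a$ vertices of $R\cap S$ into $a$ two-element ``balanced'' sets, and regard each vertex of $R\cap U$ as a further available unit; since $a+|R\cap U|\ge K-2-d$ by the inequality above, open each of the $K-2-d$ remaining clusters with one such unit, making it non-empty of excess $0$. (3) Append all still-unused units to $C_2$: a leftover balanced pair adds one vertex of $S$ and one of $T$ and a leftover vertex of $U$ adds neither, so $C_2$ retains excess $1$, and every vertex of $R$ has now been placed. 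The clusters $C_1,C_2$ together with the $K-2$ newly built ones then form a $K$-partition, all of whose clusters have excess $0$ or $1$, hence a point of $F_{S,T}$ containing $C_1$ and $C_2$.

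The only place care is needed is in the degenerate configurations: $d=0$ (step (1) empty), $d=K-2$ (step (2) empty, everything falling through to step (3)), $a=0$ (no balanced pairs), and the possible presence of leftover units. In each case one checks that step (2) can still open all $K-2-d$ of the new clusters, and that is exactly where the inequality $|R\cap(T\cup U)|\ge K-2$ — and hence hypotheses (iii) and (iv) of the lemma — is used. Everything else is routine counting.
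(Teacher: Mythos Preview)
Your approach is essentially the paper's: both constructions build the remaining $K-2$ clusters by first creating $|T|-|S|-1$ singletons from the unused $T$-vertices, then filling out the rest with matched $S$--$T$ pairs and isolated $U$-vertices, and both hinge on the same counting inequality (your $|R\cap(T\cup U)|\ge K-2$ is exactly the paper's $|T'|+|U'|+2\ge K$, derived in the same way from $|S|\le n-(K+2)$ and hypothesis~(iii)).

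There is one slip. In your step~(3) you send the leftover units to $C_2$, but the lemma asks for a $K$-partition having $C_1$ and $C_2$ \emph{themselves} as two of its clusters; enlarging $C_2$ means the resulting partition no longer contains the original $C_2$ as a cluster, which is what the subsequent transformation arguments need. The fix is immediate: since $K\ge 3$ there is at least one newly created cluster, and the leftover units (balanced pairs and $U$-singletons, all of excess~$0$) can be appended to that cluster instead without disturbing its excess. The paper does precisely this, dumping all leftovers into $C_K$.
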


\begin{proof}

Let  $S_{1/2}$,  $T_{1/2}$  and  $U_{1/2}$  refer  to  the  vertices
included in $C_1$
and $C_2$ which  are respectively in $S$, $T$  and $U$ (\textit{i.e.:}
$S_{1/2}=S\cap(C_1\cup C_2)$).
Let $S'$ (resp. $T'$ and $U'$) correspond to the elements of
$S$ (resp. $T$ and $U$) which are not in $C_1$ or $C_2$ (\textit{i.e.:}
$S'=  S\backslash S_{1/2}$). We define  the elements  of $S'$,  $T'$ and  $U'$ as
follows:   $S'=\s{s'_1,   \hdots,
  s'_{|S'|}}$, $T'=\s{t'_1, \hdots, t'_{|T'|}}$ and $U'=\s{u'_1, \hdots, u'_{|U'|}}$.

We first define the partition $\pi$ 
as follows (see
example in Figure~\ref{fig:pib}):
\begin{itemize}
\item The first two clusters are $C_1$ and $C_2$;
\item The next $|T'|-|S'|$ clusters are reduced to one vertex of $T'$: $C_{i}=\{t'_{i-2}\}\,$ $\forall i\in\{3,4\hdots,
  |T'|-|S'|+2\}$;
\item The remaining vertices of $T'$ and $S'$ are scattered in the next
  clusters, or in $C_{K}$ if it is reached:
  \begin{itemize}
  \item    $C_{i}=\{s'_{i-|T'|+|S'|+2},    t'_{i-2}\}\,$ $\forall
  i\in\{|T'|-|S'|+3, \hdots, \min(|T'|+2, K-1)\}$;
\item $C_{K}\supset\{s'_{i-|T'|+|S'|+2}, t'_{i-2}\}\,$ $\forall
  i\in\{K, |T'|+2\}$;
  \end{itemize}
\item  The  elements  of  $U'$  are similarly  scattered  in  the  next
  clusters or in $C_{K}$:
  \begin{itemize}
  \item    $C_{i}=\{u'_{i-|T'|-2}\}\,$ $\forall
  i\in\{|T'|+3, \hdots, \min(|T'|+|U'|+2, K-1)\}$;
\item $C_{K}\supset\{u'_{i-|T'|-2}\}\,$ $\forall
  i\in\{K, |T'|+|U'|+2\}$;
  \end{itemize}

\end{itemize}

\begin{figure}[h!]
  \centering\scalebox{0.85}{\input{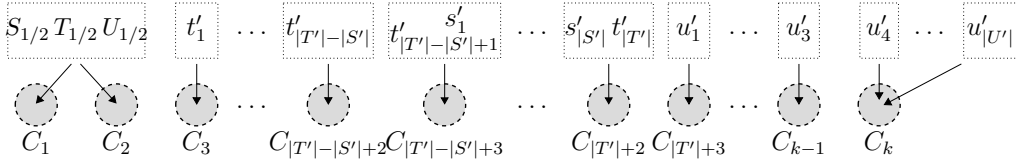}}

  \caption{Construction of $\pi$ if $K=|T'|+6$ and $|U'|\geq 4$.}
\label{fig:pib}
\end{figure}

If the obtained partition $\pi$ is a
  $K-$partition,   Lemma~\ref{le:st2}   ensures    that   it   is   in
    $F_{S,T}$. We must therefore prove that $|T'|+|U'|+2$ is greater than or equal
to $K$. 

According to the second condition of Theorem~\ref{th:st}, $K$ is lower
than or equal  to $n-(|S|+2)$.  Due to the fact  that $n-|S|$ is equal
to  $|T|+|U|$, we  obtain that  $K$  must be  lower than  or equal  to
$|T_{1/2}|+|T'|+|U_{1/2}|+|U'|-2$ .   From the third  condition of the
current lemma, we  deduce that $|T'|+|U'|+2$ is greater  than or equal
to $K$.

We then conclude from Lemma~\ref{le:st2} that $\pi$ is in $F_{S,T}$. 
\end{proof}

Each transformation considered in the  proof of the theorem involves a
couple  of clusters  which satisfies  the conditions  of  the previous
lemma  before  and after  the  transformation.   Hence, the  relations
highlighted by  the transformations are satisfied  by the coefficients
of any hyperplane which contains $F_{S,T}$.

\begin{lemma}
 If $S$  and $T$ fulfill  the conditions of  Theorem~\ref{th:st}, each
 hyperplan $H=\{ x\in\mathbb
  R^{|E|+|V|-3} | \;\al^T x = \al_0\}$ which indludes $F_{S,T}$ satisfies:  $\forall u\in  U$ 
  $\forall v\in V\backslash\s{u}$  $\al_{u,v}=0$.
 \label{le:uv}
\end{lemma}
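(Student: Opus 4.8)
We may assume $U\neq\emptyset$ (otherwise the statement is vacuous) and fix $u\in U$; the goal is $\al_{u,v}=0$ for every $v\neq u$. For each such $v$ the plan is to compare the equation $\al^Tx^\pi=\al_0$ over two $K$-partitions $\pi,\pi'\in F_{S,T}$ that differ by a transformation $\mathcal T(C_1,C_2,R)$ chosen so that, after all coefficients agreeing on $x^\pi$ and $x^{\pi'}$ cancel, only $\al_{u,v}$ (together with a few coefficients already controlled) survives. Validity of each transformation --- i.e.\ that $\pi$ and $\pi'$ both lie in $F_{S,T}$ --- is read off from the previous lemma: it suffices that the pair of clusters involved always has $T/S$-balances $\{0,1\}$ in the sense of Lemma~\ref{le:st2} and that at most four of their vertices lie in $T\cup U$. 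The key structural fact we exploit is that a vertex of $U$ contributes nothing to $|T\cap C_i|-|S\cap C_i|$, so $u$ may be shuffled among clusters without ever disturbing membership in $F_{S,T}$.

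\emph{Clean moves.} Take disjoint $D_1,D_2\subset V\setminus\{u\}$ with $|D_1\cap T|=|D_1\cap S|$ and $|D_2\cap T|-|D_2\cap S|=1$, each containing a vertex of index strictly below $u$, and small enough that $|(D_1\cup D_2)\cap(T\cup U)|\le 3$. Putting $C_1=D_1\cup\{u\}$ (balance $0$) and $C_2=D_2$ (balance $1$), the previous lemma applies before and after $\mathcal T(C_1,C_2,\{u\})$, the vertex $u$ is the lowest of neither cluster on either side, so no representative variable is touched, and $\al^Tx^\pi=\al^Tx^{\pi'}$ reduces to $\al(\{u\},D_1)=\al(\{u\},D_2)$. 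Instantiating $D_1,D_2$ as sets of one vertex ($\{a\}$ with $a\in U$, or $\{b\}$ with $b\in T$) and of two vertices ($\{a,a'\}$ with $a,a'\in U$, or $\{s,t\}$ with $s\in S,\;t\in T$), and exchanging the roles of two candidate vertices of the same type, one gets first that $\al_{u,\cdot}$ takes a common value $\lambda_u$ on a wide family of vertices, and then --- comparing a one-vertex $D_1$ with a two-vertex $D_1$ --- that $2\lambda_u=\lambda_u$, hence $\lambda_u=0$. Chaining the equalities $\al_{u,v}=\al_{u,v'}$ through a ``safe'' intermediate vertex propagates $\al_{u,v}=0$ to the remaining $v$, whatever their membership in $S$, $T$ or $U$.

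\emph{Small-index vertices.} The argument above presupposes that $u$, and the clusters it is moved between, have enough strictly smaller vertices at their disposal; when $1$ or $2$ lies in $U$ and equals $u$ this can fail --- vertex $1$ is always a representative and vertex $2$ can only be anchored by vertex $1$ --- so a one-vertex move of $u$ unavoidably changes a representative. This is the main obstacle, and it is handled using the convention $\al_1=\al_2=\al_3=0$: the source and target clusters of each move are arranged so that each keeps its representative inside $\{1,2,3\}$, whereupon the representative contribution to $\al^Tx^\pi-\al^Tx^{\pi'}$ still vanishes and the same relations as before are obtained; conditions (iii) and (iv) of Theorem~\ref{th:st} are exactly what guarantee that enough small and auxiliary vertices of the required types exist to carry this out. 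The remaining work is the routine, repeated verification that every cluster pair produced along the way meets the balance and size hypotheses of the previous lemma --- which is also the reason all auxiliary sets must be kept to two or three vertices.
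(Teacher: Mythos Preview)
Your high-level plan — shuttle the $U$-vertex $u$ between two clusters whose $T/S$-balances are $0$ and $1$, so that membership in $F_{S,T}$ is preserved and only edge-coefficients incident to $u$ survive — is exactly the mechanism the paper uses. But the sketch, as written, has a real gap at the step ``$2\lambda_u=\lambda_u$''.

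That step needs a \emph{balance-$0$ singleton} $D_1=\{a\}$ with $a<u$. The only balance-$0$ singletons are elements of $U$, so this forces $a\in U$ with $a<u$: in other words it only works for $u\neq u_1$. Your ``small-index vertices'' paragraph promises to repair the case $u\in\{1,2\}$, but $u_1$ need not lie in $\{1,2\}$. Condition~(iv) of Theorem~\ref{th:st} only guarantees $u_1\le 3$ when $|S|=1$; for $|S|\ge 2$ nothing prevents $\{1,2,3\}\subset S\cup T$ and hence $u_1\ge 4$ (take e.g.\ $S=\{1,2\}$, $T=\{3,4,5\}$, $U=\{6,\dots\}$). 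For such configurations your central cancellation simply does not fire, and the handwave ``arrange the source and target clusters to keep their representatives inside $\{1,2,3\}$'' is not yet an argument: you have to exhibit the transformations and check that the representative terms really drop out. A second, smaller issue is that the existence of $D_2$ with balance~$1$ \emph{and} a vertex $<u$ is also not automatic in every configuration; you assert it but do not verify it.

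The paper deals with exactly this difficulty by abandoning the uniform scheme and splitting into four explicit cases according to which two of $\{1,2,3\}$ lie in $S$, in $T$, or in $U$ (plus the mixed case $\{s_1,t_1,u_1\}=\{1,2,3\}$). In each case the low-index vertices available are different, and the paper writes down concrete clusters (often using Lemmas~\ref{lemma1}--\ref{lemma5}) tailored to that case so that every representative that changes has index $\le 3$ and hence coefficient $0$. Your write-up would become correct if you replaced the single ``clean move'' paragraph by this case analysis, or at least supplied an explicit alternative derivation of $\al_{u_1,v}=0$ that does not rely on a $U$-vertex below $u_1$.
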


\begin{proof}
  To prove this lemma, we consider the  following cases:
  \begin{itemize}
  \item case 1: $\s{t_1,t_2}\subset\s{1,2,3}$;
  \item case 2: $\s{s_1, s_2}\subset\s{1,2,3}$;
  \item case 3: $\s{u_1, u_2}\subset\s{1,2,3}$;
  \item case 4: $\{s_1, t_1, u_1\}=\{1,2,3\}$.
  \end{itemize}

  In each of these cases, we prove that for all $s\in S,\; t\in T$ and
  $u'\in U$ the coefficients $\al_{s,u},\; \al_{t,u}$ and $\al_{u,u'}$
  are equal to zero.
\bigskip

\qquad\textit{\underline{Case 1}}: $\s{t_1,t_2}\subset\s{1,2,3}$

If the  two first vertices  of $T$ are  lower than $4$ the  results shown
table~\ref{tab:case1} yield  the major part of the  lemma. It remains
to prove that $\al_{t,u}= 0$ for all $t\in T$.

\begin{table}
  \centering

      \begin{tabular}{ccclp{3.4cm}@{}l}
        \toprule
        \textbf{Lemma} & $C_1$ & $C_2$ & \textbf{Result} & \\\midrule
        \ref{lemma1} & $\s{s, t_1}$ & $\s{t_2, u}$ & 
        $\al_{s,u}=0$ & $\forall s\in S$ & \tagarray\label{eq:lst1}\\
        \ref{lemma1} & $\s{s,t_1,u}$ & $\s{t_2,u'}$ & 
        $\al_{u,u'}=0$ & $\forall u'\in U$ & \tagarray\label{eq:lst2}\\
        % \ref{lemma2} & $\s{s, t_1, t}$ & $\s{s, t_2, u}$ & $|S|\geq 2$ & $\al_{t_1,
        %   u}=0$ & $\forall u\in U$ &\tagarray\label{eq:lst3}\\
        % \ref{lemma2} & $\s{t_1, u'}$ & $\s{s, t_2, u}$ & $|U|\geq 2$ & $\al_{t_1,
        %   u}=0$ & $\forall u\in U\backslash \s{u'}$ & \tagarray\label{eq:lst4}\\
        % \ref{lemma5}     &    $\s{s,t_1,     t}$    &     $\s{u}$  & 
        % $\al_{t_1,u}=\al_{t,u}$ & \raggedright $\forall t\in T\backslash
        % \s{t_1},$ $t<u$ & \tagarray\label{eq:lst5}\\
        \bottomrule
      \end{tabular}  
  \caption{Results obtained in the case $\s{t_1,t_2}\subset\s{1,2,3}$.}
  \label{tab:case1}
\end{table}

From condition (i)  and (ii) we can deduce  that $|T|\leq n-3$.  Thus,
we either have $|S|\geq 2$ or  $|U|\geq 2$. Let $C$ be a cluster equal
to $C=\s{s_2,t'}$ with $t'\in T\backslash\s{t_1,t}$ if $|S|\geq 2$ and
$C=\s{u'}$  with   $u'\in  U\backslash\s{u}$  if   $|U|\geq  2$.  The
transformations $\tr{\s{t_2, u}}{C}{\s{u}}$ and $\tr{\s{s_1, t_1, t_2,
    u}}{C}{\s{u}}$  lead  to two  identical  equality  except for  the
coefficients  $\al_{t,u}$  and   $\al_{s_1,u}$  which  appear  in  the
second. Since $\al_{s_1,u}$ is equal to zero the same applies to $\al_{t,u}$

Eventually,    we    obtain    $\al_{t_1,u}=0$   via    transformation
$\trd{s,t_1}{t_2,u}{u}$.\bigskip

\qquad\textit{\underline{Case 2}}: $\s{s_1, s_2}\subset\s{1,2,3}$

According to (i), since $|S|\geq 2$, $|T|\geq 3$.  Then the first line of table~\ref{tab:case2} proves that
$\al_{t,u}$ is  always equal to zero for  all $t\in T$.  Let  $s$ be a
vertex in $S\backslash\s{s_1}$. The transformations $\trd{s_1, t, u}{t'}{u}$ and
$\mathcal T(\s{s_1,s,t,t'',u},$ $\s{t'},\s{u})$ show that $\al_{s, u}$ is equal
to zero.  A symmetrical reasoning by substituing respectively $s_1$ and $s$ by
$s_2$ and $s_1$ leads to $\al_{s_1,u} = 0$. Eventually, equations~(\ref{eq:lst23})
and~(\ref{eq:lst24}) show that all the
coefficients $\al_{u,u'}$ are also equal to zero. 

% We now have  to prove that $\al_{s,u}$ is equal to  zero for all $s\in
% S\backslash  \s{s_1, s_2}$.  If  $u$  is greater  than  $s$ or  $t_1$,
%  $\trd{s, t_1, u}{s_1,t_2, t_3}{u}$ yields the result.

% Otherwise, $u$ is lower than both $s$ and $t_1$. Then the two
%  transformations  $\trd{s_1,  s,   u,  t,  t'}{t_1}{u}$  and  $\trd{s,
%    \ot}{t_1, u}{u}$ respectively  lead to $\al_{s, u}+\al_{t_1}=\al_u$
%  and       $\al_s=\al_{s,u}+\al_{t_1}$.       We      thus       have:
%  \begin{equation}
% \al_u=\al_s.\label{eq:lst26}
% \end{equation}

% Eventually  $\trd{s_1,  t_1,t_2}{s,  \ot,  u}{u}$ gives  $\al_{s,  u}+\al_u
% =\al_s$ which can be simplified by equation~(\ref{eq:lst26}) in $\al_{s,u}=0$.

\begin{table}
  \centering

      \begin{tabular}{ccccll@{}l}
        \toprule
        \textbf{Lemma} & $C_1$ & $C_2$ & \textbf{Result} & \\\midrule
        \ref{lemma1}  &  $\s{s_1,  t,
            t'}$ & $\s{s_2, t'', u}$ &
        $\al_{t,u}=0$ & $\forall  t\in T,\,$
        & \tagarray\label{eq:lst21}\\
        & & & & $t',t''\in T\backslash \s{t} $\\%\rule[-7pt]{0pt}{18pt} 

        \ref{lemma5}  & $\s{u,t}$  & $\s{u'}$  & 
        $\al_{u,u'}=\al_{t,u'}$                      &
        $\forall u'\in U,\, $ &
        \tagarray\label{eq:lst23}\\
        & & & & $t<u',\, u<u'$\\
        \ref{lemma5}    &   $\s{s_1,   t,    u}$   &    $\s{t',u'}$   &
        $\al_{s_1,u'}=\al_{u,u'}$ & $\forall u'\in U,\, $ &
        \tagarray\label{eq:lst24}\\
        & & & & $u<u'<t'$\\
        \bottomrule
      \end{tabular}  
  \caption{Results obtained in the case $\s{s_1,s_2}\subset\s{1,2,3}$.}
  \label{tab:case2}
\end{table}

\bigskip

\qquad\textit{\underline{Case 3}}: $\s{u_1, u_2}\subset\s{1,2,3}$

Let $u$ be a vertex  of $U$ and $a\in \s{u_1,u_2}\backslash\s{u}$. The
transformations $\trd{a}{t,u}{u}$ and  $\trd{a,s,t}{t,u}{u}$ lead to :
$\al_{s,u}+\al_{t,u}=0$.       Then,      $\trd{s,t}{a,t'}{a}$     and
$\trd{s,t}{a,t',u}{a,u}$ show that $\al_{t,u}$ is equal to zero. Thus,
the same applies to $\al_{s,u}$.

We  now have  to prove  that  $\al_{u,u'}$ is  equal to  zero for  all
$u,u'\in  U$.  We  assume  wlog  that  $u$ is  lower  than  $u'$.   If
$\min(s_1,t_1)\leq 3$, $\trd{u,u'}{s_1,t_1,t_2}{u'}$ gives the result.
Otherwise,  $U$ necessarily  contains  at least  three vertices.   Let
$a\in U\backslash\s{u,u'}$.  We then conclude  through
$\trd{a,u}{t}{a}$ and $\trd{a,u,u'}{t}{a,u'}$.  \bigskip

\qquad\textit{\underline{Case 4}}: $\{s_1, t_1, u_1\}=\{1,2,3\}$

Table~\ref{tab:case4}  enable  to conclude  for  all the  coefficients
except $\al_{s,u_1}$ for all $s\in S$ and
$\al_{t,u_1}$ for all $t\in T$.

$\mathcal T(C\cup\s{t_1},\s{t},\s{t_1,t})$ with $C$ successively equal
to $\s{s_1}$ and $\s{s_1,u_1}$ shows that 
$\al_{t_1,u_1}$ and $\al_{t,u_1}$ are equal.  Then, $\trd{t_1,
  u_1}{s_1, t}{u_1}$ leads to: $\al_{s_1,u_1}=0$.

As previoulsy stated, at least one of the set $S$ ou $U$ contains more
than one vertex. If $U$ contains at least two vertices, $\mathcal
T(P\cup\s{s_1,t},\s{u_2},\s{s_1,t})$ with $P$ successively equal to $\s{t_1}$ and 
$\s{t_1,u_1}$ gives:  $\al_{t,u_1}=0$.  If $S$
contains more than one vertex, we consider
$\mathcal T(P\cup\s{s,t},\s{s',t'},\s{s,t})$ with $\s{s,s'}\subset S$,
$\s{t,t'}\subset T$  and $P$  a set of  vertices. By first  taking $s$
equal to $s_1$ and considering $P$ equal to $\s{t_1}$ and
$\s{t_1,u_1}$, we show  that $\al_{t,u_1}$ is equal to  zero. The same
reasoning with $s\in S\backslash\s{s_1}$ gives: $\al_{s_1,u_1}=0$.

% Table~\ref{tab:case4} show that all the coefficients are equal to zero
% except $\al_{s,u_1}$ and $\al_{t,u_1}$.

% The transformation $\trd{t_1, u_1}{s_1, t}{u_1}$ leads to 
% \begin{equation}
%   \label{eq:plop}
%   \al_{t_1, u_1}=\al_{t, u_1}+\al_{s_1, u_1}.
% \end{equation}

% As previously stated, we either have $|S|\geq 2$, $|U|\geq 2$ or both.
% If $S$  contains at least two vertices,  equation~(\ref{eq:plop}) and the
% following transformations  prove that all the  coefficients related to
% $u_1$ are equal to zero:  $\mathcal T(\s{t_1,u_1}$ $\s{s_1, s, t, t'}$
% $\s{u_1})$ and $\mathcal  T(\s{t_2, u_1},\s{s,t},$ $\s{u_1})$. Similar
% results  are   obtained  whenever  $|U|$  is  greater   than  one  via
%  equation~(\ref{eq:plop}) and the transformations $\trd{t_1,
%   u_1}{u}{t_1}$,   $\trd{s_1,    t_1,   u_1}{t}{u_1}$   and   $\trd{C,
%   u_1}{u}{u_1}$ with  $C$ successively equal to  $\s{t_1}$, $\s{t}$ and
% $\s{s_1,t,t_1}$.
\begin{table}
  \centering

      \begin{tabular}{cccclll}
        \toprule
        \textbf{Lemma} & $C_1$ & $C_2$ & \textbf{Result} & \\\midrule
        \ref{lemma1}  &   $\s{s_1,  t,  u}$   &  $\s{t_1,  u'}$   &
        $\al_{u,u'}=0$      &      $\forall      u'\in     U$      &
        \tagarray\label{eq:lst41}\\
        \ref{lemma1} & $\s{t,u_1,u}$  & $\s{t_1,s}$ & $\al_{s,u}=0$
        & $u\neq u_1$ & \tagarray\label{eq:lst42}\\
        % \ref{lemma2} & $\s{t,u_1}$ & $\s{s,t_1}$ & -& $\al_{s,u_1}=0$ &
        % $\forall s\in S$ & \tagarray\label{eq:lst43}\\
        \ref{lemma1}  &   $\s{u_1,  u}$  &  $\s{s_1,  t,   t'}$  &  
        $\al_{t,u}=0$ & $\forall t,t'\in T$, $u\neq u_1$ &
        \tagarray\label{eq:lst44}\\
        % \ref{lemma2} & $\s{u_1,u}$ & $\s{s_1, t, t'}$ & $|U|\geq 2$&
        % $\al_{t,u_1}=0$ & $\forall t\in T$ & \tagarray\label{eq:lst45}\\
        % & & & & & $\forall t'\in T\backslash \s{t}$& \\
        % \ref{lemma2}  &  $\s{s_2, t_1,  u_1}$  &  $\s{s_1,  t, t'}$  &
        % $|S|\geq 2$& $\al_{t,u_1}=0$ & $\forall t\in T$ & \tagarray\label{eq:lst46}\\
        % & & & & & $\forall t'\in T\backslash \s{t}$& \\

        \bottomrule
      \end{tabular}  
  \caption{Results obtained in the case $\{s_1, t_1, u_1\}=\{1,2,3\}$.}
  \label{tab:case4}
\end{table}
\end{proof}

We now present the proof of Theorem~\ref{th:st}.

\begin{proof}

We
start by  proving that if any  of the conditions is  not satisfied the
2-partition inequality is not facet-defining.

  \begin{enumerate}[(i)]
  \item   Lemmas~\ref{le:st1} and~\ref{le:st2}  respectively
    lead to  $|T|-|S| >0$  and $|T|-|S|\leq K$.   It remains  to prove
    that if $|T|-|S|= K$, equation (\ref{eq:st}) is not facet-defining.  In that
    case,  each cluster  $C$ verifies,  $|C\cap T|  = |C\cap  S|+1$ and
    $F_{S,T}$ is thus included in the $|T|$ hyperplanes defined by
    $\sum_{i\in T\backslash \s{t}}x_{i,t}=\sum_{i\in S}x_{i,t}$ for all $t\in T$.

  \item We   get via Lemma~\ref{le:st2} that each cluster
    $C$   contains  at least  as  many  vertices  from $T$  than  from
    $S$. Thus, at least $|S|$ vertices are not a representative of their
    cluster,  and then  $K\leq n-|S|$.   If  $K=n-|S|$,
    $x(E(S),E(T))$ is equal to $|S|$ since the only edges in the partition are necessarily the
    ones which ensure that each vertex $s\in S$ is linked to a vertex in $T$.
    Eventually,  if  $K=n-|S|-1$  we   deduce  by  enumerating  all
    possible configurations that $F_{S,T}$ is included in the hyperplane
    defined by $x(E(U))+x(E(U),E(T))+x(E(S),E(T))-x(E(S))=3$.

  \item  Lemma~\ref{le:st2} states that each cluster contains
    at least  as much  vertices from $T$  than from  $S$. As a  result if
    there was a vertex $s$ in $S$  greater than any vertex in $T$ we would
    have $x_s=0$ since $s$ would never be a representative.

  \item  If $|S|=1$  and  there is  no  element  of $U$  in
    $\{1,2,3\}$ there is then at least two elements of $T$, $t_1$ and
    $t_2$, whose indices are lower than or equal to $3$. The last element in
    $\{1,2,3\}$ is either in $T$ or $S$.
    \begin{itemize}
    \item   If  it   is  in   $T$   we  have   $\sum_{i=4}^n  x_i   =
      x_{1,2}+x_{1,3}+x_{2,3}+K-3$;
    \item It it is in $S$ we have $\sum_{i=4}^n x_i = x_{s,t_1}+x_{s,t_2}+K-3$.
    \end{itemize}

\end{enumerate}

To prove that $F_{S,T}$ is  facet-defining under the above mentioned
conditions,  we  assume that  $F_{S,T}$  is  included in  a  hyperplane
$H=\{ x\in\mathbb R^{|E|+|V|-3} | \;$ $\al^T x = \al_0\}$. We 
highlight relations  between the $\al$ coefficients  thanks to several
transformations from one $K-$partition to another in $F_{S,T}$. 

We consider three cases.  
\begin{itemize}
\item case 1: $|S|=1$;
\item case 2: $|S|\geq 2$ and $|U|=0$;
\item case 3: $|S|\geq 2$ and $|U|\geq 1$.
\end{itemize}
\bigskip

\qquad\textit{\underline{Case 1}}: $|S|=1$

In that case, according to (iv), $u_1$ is in $\{1,2,3\}$.  From
(i) and (ii) we deduce that $|T|$ is lower than $n-3$. As a result, since $|S|=1$,  $|U|$ is greater than
$2$.

Let $t$ be either $t_1$ or $t_2$.  For all $t'\in T\backslash\s{t}$, by noticing that $\min\s{s_1, t,
  u_2}\leq 3$, we
deduce, from $\trd{s_1, t, u_2}{t', u_1}{u_1}$ that: $\al_{t'}=0$.

Thanks  to  (iii)  we   know  that  $s_1$  is  lower  than
$\ot$.  Moreover,  since  only  $u_1$  and $s_1$  may  be  lower  than
$\min(t_1,  u_2)$,  we know  that  $\al_{\min(t_1,u_2)}$  is equal  to
zero. Thus, $\mathcal T(\s{t_1, u_2},$ $\s{s_1,\ot, u_1},\s{u_1})$ gives:
$\al_{s_1}=0$. For all $u\in U\backslash \s{u_1}$, the transformation $\trd{u_1, u}{\ot}{u_1}$ proves that
the  coefficients $\al_{u}$  are   null.

Eventually, for all $t,t'\in T$, we prove that the expressions
$\al_{s_1,t}$ and $-\al_{t, t'}$  are equal through $\trd{s_1, t}{t'}{s_1}$
and $\trd{s_1, t, t'}{u_1}{t}$.
\bigskip

\qquad\textit{\underline{Case 2}}: $|S|\geq 2$ and $|U|=0$

Conditions (i) and (ii) still lead to  $|T|\leq n-3$ which
gives $|S|\geq 3$, in the current case.  Since $P_{n,K}$ is only full-dimensional for
values of $K$ greater than two, condition (ii) implies
that $|T|$ is greater than four.

In this part of the proof, let the expressions $\Ot$ and $\Tt$
both correspond to $t_1$ or  $t_2$ with the restriction that $\Ot$ and
$\Tt$ are distinct. Similarly, $\Os$ and $\Ts$ correspond to $s_1$ or
$s_2$.  Since  $|U|=0$, $\min(\Ot,  \Os)$ is lower  than four  and its
corresponding  representative variable  $\alpha_{\min(\Ot,  \Os)}$ is,
therefore, equal to zero.

We first prove $\forall s\in S\backslash \{s_1, s_2\}\;\forall t,t'\in
T\backslash  \{t_1,   t_2\}$  that  $\beta\zdef\al_{s,t}=   -  \al_{t,t'}=
\al_{\Os,\ot}=-\al_{\Ot,\ot}$.

We obtain $\al_{s,t}=-\al_{t,t'}$  thanks to the
transformations           represented           Figures~\ref{fig:st2.1}
and~\ref{fig:st2.2}. The remaining part of the equation is highlighted
via      transformations   $\tr{\s{s,t,\ot}}{\s{s',t'}}{\ot}$   and
$\tr{\s{\overline s, \overline t,s,t,\ot}}{\s{s',t'}}{\ot}$.

\begin{minipage}{1.0\linewidth}
  \begin{center}
    \captionsetup{justification=centering, singlelinecheck=false }
    \begin{minipage}{0.44\linewidth}
      \centering\trtikz{$s_1\; t_1$}{$t$}{1}{$t_2$}{$s_2$}{}{1.8}\\
    \end{minipage}
    \begin{minipage}{0.44\linewidth}
      \centering\trtikz{$s_1\; t_1\; s\;t'$}{$t$}{1}{$t_2$}{$s_2$}{}{1.8}\\
    \end{minipage}
    \vspace{0.2cm}

    \begin{minipage}{0.44\linewidth}
      \captionof{figure}{$\tr{\s{s_1, t_1, t}}{\s{s_2, t_2}} {t}$}
      \label{fig:st2.1}
    \end{minipage}
    \begin{minipage}{0.44\linewidth}
      \captionof{figure}{$\tr{\s{s_1,  t_1, s,  t,  t'}}{\s{s_2, t_2}}
        {t}$}
      \label{fig:st2.2}
    \end{minipage}
  \end{center}
\end{minipage}
For any couple of vertex $(s,t)\in S\times T$, we now consider the
transformations $\trd{\Os, s, \Ot,  t}{\ot}{s}$ and $\trd{\Os, s, \Ot,
  \Tt}{t}{\Ot, t}$ which respectively lead to 
\begin{equation}
  \label{eq:B}
  \al_{s,\Os}+\al_{s,\Ot}+\al_{s,t}+\al_\ot = \al_s+\beta
\end{equation}
and
\begin{equation}
  \label{eq:A}
  \al_{\Ot,                                              \Tt}+\al_{\Ts,
    \Ot}+\al_{s,\Ot}+\al_t=\al_{\Ot}+\al_{s,t}+\al_{\Ts,t}+\al_{\Tt, t}.
\end{equation}

Equation~(\ref{eq:B}) shows  that for any  given $s\in S$, the  value of
$\al_{s,t}$ is the same for all $t\in T\backslash \s{\ot}$. Since
$\al_{s,t_3}$    is    equal    to    $\beta$,   we    deduce:
\begin{equation}
\al_{s,\Ot}=\beta\,\mbox{ }\forall s\in S.\label{eq:C}
\end{equation}

After replacing  $\al_{s,\Ot}$ by $\beta$  in equation~(\ref{eq:A}), a
similar reasoning can be applied to prove:
$\al_{\Os, t}=\beta\,$ $\forall t\in T\backslash \s{t_1, t_2}$.  

% We then  show that  all the representative  coefficients are  equal to
% zero. For $(s,t)\in  (S,T)$, the transformation $\tr{\s{s,t}}{\ot}{s}$
% gives $\alot=\al_{\max(s,t)}$. Indeed,  $\max(s,t)$ which is first not
% a  representative, becomes  one when  $s$  is moved  with $\ot$.   The
% transformations $\tr{\s{\Os,  \Ot, \ot}}{\s{s, t}}{\s{\Os,  \Ot}}$ and
% $\tr{\s{\Os, \Ts, \Ot,  \Tt,\ot}}{\s{s,t}{\s{s,t,\ot}}}$ are then used
% to  prove that  $\alot$  is equal  to  $\min(s,t)$.  The  coefficients
% $\al_s$ and $\al_t$ are, thus, equal to a constant that is referred to
% as $\gamma$.  \bigskip
  The
transformation   $\trd{\Os,   \Ts,   s,\Ot,   \Tt,   t}{\ot}{s}$   and
equation~(\ref{eq:B})   give:    $\al_{s,   \Os}=-\beta\,$ $\forall   s\in
S\backslash \s{s_1, s_2}$.

If  $t_3\leq 3$  we prove  by symmetry  that $\al_{\Ot,t}=\al_{t_3,t}\,$
$\forall      t\in     T\backslash\s{t_1,     t_2,      t_3}$.     Thus,
$\al_{\Ot,t}=\beta$.  Otherwise $s_1\leq  3$  and the  same result  is
obtained      via     equation~\eqref{eq:B}      and     transformations
$\trd{t,s_1}{\ot}{s_1}$ and $\trd{s,t,\Ot}{s,\ot}{t}$.

% To  prove  that $\al_{t,\Ot}$  is  equal  to  $-\beta$ for  all  $t\in
% T\backslash \s{t_1, t_2}$, we use $\mathcal T(\s{\Os, \Ot, t},$ $\s{\Tt, \Ts},\s{t})$.

 Equation~(\ref{eq:A}) now shows that the value of the representative
coefficients $\al_t$ for any $t\in T\backslash \s{t_1,
  t_2}$ are the same.   Equation~(\ref{eq:B})
applied on any
$s\in S\backslash\s{s_1,  s_2}$ proves: $\al_s =  \al_t$. Let $\gamma$
be this value.

If $t_3$ or $s_3$ is lower than four, $\gamma$ is equal to zero (since
$\al_{\min(s_3,t_3)}$ is  null).  Otherwise, $s_1$ and $t_1$ must
be lower than four. Since $\mathcal T (\s{\Os, \Ot},\s{\ot},$ $\s{\Os})$ gives
$\gamma = \al_{\max\s{\Os, \Ot}}$ we deduce, in that case also, that $\al_t$
is equal to zero.  Since $\max\s{\Os, \Ot}$ and
$\min\s{\Os, \Ot}$  are both equal  to zero, we conclude  that $\al_{\Os}$
and $\al_{\Ot}$ are null. 

Eventually, to prove that the value of $\al_{s_1, s_2}$ and $\al_{t_1,
  t_2}$ is  $-\beta$, we use  equations~(\ref{eq:A}) and~(\ref{eq:B}) with
$t=t_1$ and $s=s_1$.
\bigskip

\qquad\textit{\underline{Case 3}}: $|S|\geq 2$ and $|U|\geq 1$

For all $s$ in $S$ and all $t,t'$ distinct in $T\backslash\s{t_1}$, we first prove that $\al_{s, t}$, $-\al_{t,t'}$ and
$-\al_{t_1, \ot}$ are all equal to a constant called $\beta$.

Let $s'$ be a vertex in $S\backslash\s{s}$. The transformations represented in Figures~\ref{fig:st3.1}
and~\ref{fig:st3.2} lead to $\al_{s,t} = -\al_{t,t'}$ and
\begin{equation}
\al_{u_1}\mathbb{1}(t<u_1)+\al_{t_1,t}+\beta=\al_{t}\mathbb{1}(t<u_1).\label{eq:st3.1}
\end{equation}

Equation~(\ref{eq:st3.1}) when $t$ is equal to $\ot$ can be simplified
via $\mathcal T(\s{u_1}$ $\s{s', t, \ot}$ $\s{\ot})$ in $\al_{t_1,\ot}=-\beta$.

\begin{minipage}{1.0\linewidth}
  \begin{center}
    \captionsetup{justification=centering, singlelinecheck=false }
    \begin{minipage}{0.44\linewidth}
      \centering\trtikz{$s'\; t_1$}{$t$}{1}{}{$u_1$}{}{1.5}\\
    \end{minipage}
    \begin{minipage}{0.44\linewidth}
      \centering\trtikz{$s'\; s\; t_1 \; t'$}{$t$}{1}{}{$u_1$}{}{1.8}\\
    \end{minipage}
    \vspace{0.2cm}

    \begin{minipage}{0.44\linewidth}
      \captionof{figure}{$\trd{s', t_1, t}{u_1}{t}$}
      \label{fig:st3.1}
    \end{minipage}
    \begin{minipage}{0.44\linewidth}
      \captionof{figure}{$\trd{s', s, t_1, t, t'}{u_1}{t}$}
      \label{fig:st3.2}
    \end{minipage}
  \end{center}
\end{minipage}
We now show that for all triplets $(s,t,u)\in S\times T\backslash \s{t_1,\ot}\times U$ the two higher
vertices  of the  triplet have  the the  same  representative coefficient
equal to $\al_\ot$.

The       transformation       $\trd{s,t}{\ot}{s}$      leads       to
\begin{equation}
\al_{\ot}=\al_{\max\s{s,t}}\label{eq:st3.2}.
\end{equation}
Then       the      transformation
$\trd{s,t,\ot}{u}{t}$                                            yields
$\al_{t}\id{u<t<s}+\al_u\id{t<u}=\al_t\id{s<t<u} +\al_s\id{t<s}$ which
can be  simplified in
\begin{equation}
\al_{\max\s{u,t}}=\al_{\max\s{s,t}}\label{eq:st3.3}\, ,
\end{equation}
using the
fact      that     $\id{u<t<s}-\id{s<t<u}$      is      equal     to
$\id{u<t}-\id{s<t}$.

If  $t$  is  lower  than  $s$  or  $u$,  equations~(\ref{eq:st3.2})  and
(\ref{eq:st3.3})  prove that  the  two higher  vertices  among the  triplet
$(s,t,u)$   have  representative   variables  equal   to  $\al_{\ot}$.
Otherwise,    the   transformation    $\trd{u}{s,t,\ot}{s,t}$   yields
$\al_{\max\s{u,       s}}=\al_{\ot}$       which      leads       with
equation~(\ref{eq:st3.2}) to the same result.

The next  of this  proof consists in  showing that  the representative
coefficient of  any vertex  which is  not $t_1$ is  equal to  zero. This
result  is true if  at least  two vertices  of $(s_1,t_2,u_1)$  are in
$\s{1,2,3}$ (since  the representative  variables of the  two greatest
representatives are equal to  $\al_{\ot}$). It remains to consider the
cases   in  which   only  one   of   these  three   elements  are   in
$\s{1,2,3}$. Let $x$ be this element.
\begin{itemize}
\item If $x=t_2$, $t_3$ is necessarily in $\s{1,2,3}$. The coefficient
  $\al_{\min(s_1, u_1)}$ is equal to $\al_{\ot}$ and
  $\trd{s_2,t_1,t_2}{s_1,\ot,u_1}{t_2}$ gives the result.
\item If  $x=u_1$, $u_2$ is necessarily in  $\s{1,2,3}$ and $\trd{u_1,
    u_2}{t_2}{u_2}$ enable to conclude.
\item If $x=s_1$,  $s_2$ is necessarily in $\s{1,2,3}$.   If the third
  vertex  of this  set is  $t_1$, $\trd{s_2,t_1}{\ot}{s_2}$  gives the
  result.   Otherwise,  $S$  contains  at  least  three  vertices  and
  $(s_1,s_2,s_3)=(1,2,3)$.      We     conclude    using     $\mathcal
  T(C\cup\s{s_1,s_3,t_1,t_3}\s{\ot}\s{s_3})$   with  $C$  successively
  equal  to  $\emptyset$  and  $\s{s_2,t_2}$ and  the  transformation
  $\trd{s_1,s_2,s_3,t_1,t_3,\ot}{t_2}{s_1,s_2,\ot}$.
\end{itemize}

% The  next   step  of   this  proof  consists   in  showing   that  the
% representative coefficient of any vertex  which is not $t_1$ is equal to
% zero.  Let  $s,t,u$ be three  vertices respectively in  $S$, $T\backslash
% \s{t_1, t_2}$ and $U\backslash \s{u_1}$. The transformations
% %$\trd{s_1, s, t_2, t_3}{\ot}{s}$, 
% $\mathcal  T(\s{s_1, t_2,  t}$ $\s{s_2,  \ot}$ $\s{t})$  and $\mathcal
% T(\s{s_1, t_2, u_1, u}$ $\s{\ot}$  $\s{u})$ yield the result if either
% $t$ or $u$  is the lowest vertex  of the triplet.  If $s$  is lower than
% any  vertex  in $T\backslash\s{t_1}\cup  U$,  $s_2\leq  3$.   If $s$  is
% greater   than   $s_2$   the   transformations  $\trd{s_1,   s,   t_i,
%   t_3}{\ot}{s}$  and $\trd{t_1,  t_2,  t_3}{s_1, s_2,  s}{s}$ with  $i$
% successively equal to $1$ and $2$ leads to $\al_s=\al_\ot$.  Thus, for
% any  vertex  $x$   in  $X=S\backslash  \cup  T\backslash  \s{t_1,
%   t_2}\cup U\backslash \s{u_1}$, $\al_x$ is equal to $\al_\ot$. If one
% vertex in $X$ is lower than four, $\al_\ot$ is equal to zero. Otherwise,
% $t_2$ and $u_1$ are lower than
% four and thus $\al_{\max(t_2, u_1)}$ is both
% equal  to  $\al_\ot$ and  zero.   Then for  any  vertex  $x'$ in  $S\cup
% T\backslash \s{t_1}\cup U$, $\al_{x'}$ is equal to zero.

Let $s$ be either $s_1$ or $s_2$. If $t_1\in\s{4,\hdots, n}$, we prove that $\al_{t_1}$ is null
thanks to $\trd{s,t_1,u_1}{\ot}{t_1,\ot}$ and $\trd{s_1, s_2,t_1, t_2,u_1}{\ot}{t_1,\ot}$.
Eventually, we prove for all $s,s'\in S$ distincts and
$t\in T\backslash\s{t_1}$ that expressions $\al_{s, t_1}$,
$-\al_{s, s'}$ and $-\al_{t_1, t}$ are equal to $\beta$ through:   $\trd{s,   t_1}{\ot}{s}$,   $\trd{s,   s',   t_1,
  t_2}{\ot}{s}$ and $\trd{s_1, t_1, t}{s_2, \ot}{t}$.
\end{proof}

\section{General clique inequalities}
\label{sec:general}

The \textit{clique  inequalities} have  been introduced by  Chopra and
Rao~\cite{chopra1993partition} and correspond to the fact that
for any  $m$-partition $\pi$ ($m\leq K$)  and any set  $Z\subset V$ of
size $K+1$, at  least two vertices of $Z$ are  necessarily in the same
cluster. The clique inequality induced by a given set $Z$ is:
\begin{equation}
  \label{eq:rzsge,kl}
  x(E(Z))\geq 1.
\end{equation}

The general clique inequalities are obtained by increasing the size of
both $Z$ and  the right-hand
side   of  equation~\ref{eq:rzsge,kl}.   Thus,   the  general   clique
inequalities induced by a set $Z\subset V$  of size
$qK + r$ with $q\in\mathbb N$ and $r\in\s{0,1,\hdots, K-1}$ is defined
by Chopra and Rao as:
\begin{equation}
  \label{eq:Z}
  x(E(Z))\geq 
\left(
    \begin{array}{@{}c@{}}
      q+1\\2
    \end{array}
\right) r + 
\left(
    \begin{array}{@{}c@{}}
      q\\2
    \end{array}
\right) (K-r).
\end{equation}

Let $P_Z$ be the face of $P_{n,K}$ defined by equation~(\ref{eq:Z}). As
represented    Figure~\ref{fig:dependent},   the   lower    bound   of
inequality~\eqref{eq:Z} corresponds to the  minimal value of $x(E(Z))$ in
the incidence vector of  a $K$-partition~-- thus ensuring the validity
of this inequality.  It is obtained by setting $q+1$ vertices of $Z$ in each
of  the $r$  first clusters  and  $q$ vertices  in each  of the  $K-r$
remaining clusters.

\begin{figure}[h]
  \centering
  
  \centering{\input{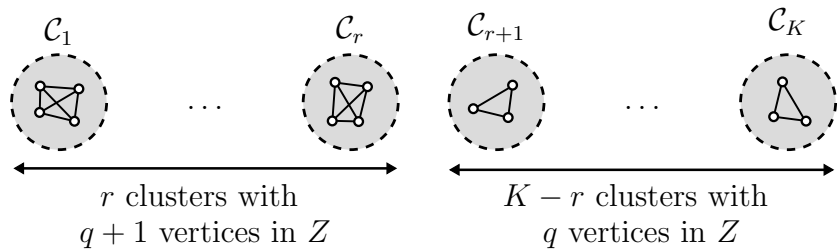}}
  \caption{Distribution of $Z$ vertices in a  $K$-partition
   included in $F_Z$ (case where $q$ is equal to three).}
  \label{fig:dependent}
\end{figure}

These   inequalities   have  also   been   studied   by  Labb\'e   and
\"Oszoy~\cite{labbe2010size}  in  the  case  where the  clusters  must
contain at least $F_L$ vertices. In this context, the size of $Z$ must
greater than or equal to $\lfloor\frac{n}{F_L}\rfloor$.  Finally, Ji and
Mitchell   also   studied   these   inequalities  that   they   called
\textit{pigeon inequalities}~\cite{ji2005clique}.

In the following  $U=\s{u_1, u_2, \hdots, u_{|U|}}$ is  used to denote
$V\backslash  Z$ such that  $u_1<u_2<\hdots<u_{|U|}$. The  vertices in
$Z=\s{z_1, \hdots, z_{K+1}}$ are similarly sorted.

\begin{theorem}
  If $P_{n,K}$ is full-dimensional, for a given $Z\subset V$ of
  size $K+1$, inequality~\eqref{eq:Z} is facet-defining if
  and only if:
  \begin{enumerate}[(i)]
  \item $|U|\geq 1$ and $u_1\leq 3$;
  \item $z_{|Z|}=n$
  \item $|Z|\in\s{K+1,\hdots, 2K-1}$.
  \end{enumerate}
  \label{th:dependent}
\end{theorem}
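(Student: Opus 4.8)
The plan is to follow the pattern of the preceding theorems: write the clique inequality as $\omega^Tx\geq\omega_0$, where $\omega_{z_i,z_j}=1$ for $z_i,z_j\in Z$, all other coefficients of $\omega$ are $0$ and $\omega_0=1$ (note that with $|Z|=K+1$ the right-hand side of~\eqref{eq:Z} equals $1$, and condition (iii) is then equivalent to $K\geq 2$, hence automatic under full-dimensionality). For \emph{necessity} I would exhibit, whenever one of the remaining conditions fails, an equation satisfied throughout $F_Z$ which is not proportional to $\omega^Tx=\omega_0$. If $|U|=0$ then $n=K+1$ and every $K$-partition has exactly one cluster of size two and $K-1$ singletons, so $x(E(Z))=x(E(V))=1$ identically and $F_Z=P_{n,K}$. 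If $u_1\geq 4$, i.e. $\s{1,2,3}\subseteq Z$, then on $F_Z$ at most one of $x_{1,2},x_{1,3},x_{2,3}$ is $1$, so counting the representatives lying in $\s{1,2,3}$ (there are $3-x_{1,2}-x_{1,3}-x_{2,3}$ of them) against the total $K$ gives $\zsum_{i=4}^nx_i-x_{1,2}-x_{1,3}-x_{2,3}=K-3$ on all of $F_Z$. If $z_{|Z|}\neq n$, i.e. $n\in U$, then since each cluster of a partition in $F_Z$ must contain a vertex of $Z$ (otherwise $Z$ spans at most $K-1$ clusters and $x(E(Z))\geq 2$), the cluster of $n$ contains a $Z$-vertex below $n$, so $x_n=0$ on $F_Z$; this is a proper face of $P_{n,K}$ distinct from $F_Z$ (cf. Theorem~\ref{th:u}), so $F_Z$ is not a facet.

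For \emph{sufficiency}, assume (i)--(iii) and that $F_Z\subseteq H=\s{x\in\mathbb R^{|E|+|V|-3}\mid\al^Tx=\al_0}$; I must show $(\al,\al_0)$ is proportional to $(\omega,\omega_0)$. A $K$-partition belongs to $F_Z$ precisely when each of its $K$ clusters contains exactly one vertex of $Z$ except a single cluster containing exactly two, the vertices of $U$ being placed arbitrarily. Every transformation $\mathcal T(C_1,C_2,R)$ used below preserves this structure, hence leads from one partition in $F_Z$ to another, and therefore yields a valid linear relation among the $\al$'s. The first block of work is to prove the analogue for $F_Z$ of Lemma~\ref{le:uv}, namely $\al_{u,v}=0$ for every $u\in U$ and $v\in V\backslash\s{u}$: because $u_1\leq 3$ one can keep $u_1$ with a vertex of $\s{1,2,3}$ and move the remaining vertices without disturbing any representative variable, and moving a single vertex of $U$ between two clusters each reduced to one vertex of $Z$ (chaining over all such clusters, and using that $n\in Z$ is never a representative unless isolated) forces all coefficients $\al_{u,z}$, $z\in Z$, and $\al_{u,u'}$, $u'\in U$, to vanish.

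Next I would show that the coefficients $\al_{z_i,z_j}$, $z_i,z_j\in Z$, are all equal to a common value $\beta$, and simultaneously that the representative coefficients $\al_v$, $v\geq 4$, vanish. Using that $z_{|Z|}=n$ is the largest vertex and hence never a representative when not isolated, I would start from a partition in $F_Z$ whose unique $Z$-collision is $\s{z_a,n}$ in one cluster with $z_b$ alone in another, and move $z_a$ so that the collision becomes $\s{z_a,z_b}$; combined with the vanishing of the $U$-edge coefficients and a careful accounting of the handful of representative variables that change, this gives $\al_{z_a,n}=\al_{z_a,z_b}$ up to representative coefficients, and chaining over all pairs of $Z$ (together with moves that also involve the low vertices in $\s{1,2,3}$) yields $\al_{z_i,z_j}=\beta$ and a system of relations among the $\al_v$. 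Transformations that convert a vertex of $U$, or of $Z$, into a representative and back --- now that all $U$-edge coefficients are $0$ and all $Z$-internal edge coefficients are $\beta$ --- then isolate each $\al_v$ and force it to $0$; here $z_{|Z|}=n$ again supplies vertices that are never representatives and $u_1\leq 3$ absorbs the low end. The main obstacle is exactly this intertwining of the two last steps: every rearrangement of the $Z$-vertices can change cluster minima, and the role of the hypotheses $u_1\leq 3$ and $z_{|Z|}=n$ is precisely to provide a low vertex and a high vertex that neutralise these representative-variable side effects.

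Finally, evaluating $\al^Tx=\al_0$ on the explicit partition $\s{\s{z_1,z_2},\s{z_3},\hdots,\s{z_{K+1}}}$ with the vertices of $U$ appended to clusters so that none of them becomes a representative (possible since $u_1\leq 3$ and $|U|\geq 1$), every term vanishes except $\al_{z_1,z_2}=\beta$, so $\al_0=\beta$. Thus $(\al,\al_0)=\beta(\omega,\omega_0)$ and, by Theorem~3.6 in Section~I.4.3 of \cite{nemhauser1988integer}, $F_Z$ is facet-defining.
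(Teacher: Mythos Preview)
Your necessity arguments are correct and essentially match the paper's. The sufficiency outline follows the same overall strategy as the paper (pin down every coefficient of a containing hyperplane via transformations $\mathcal T$), but there is a real gap in the order you propose.

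You claim you can eliminate all $\al_{u,v}$ with $u\in U$ \emph{first}, by ``moving a single vertex of $U$ between two clusters each reduced to one vertex of $Z$ (chaining over all such clusters)''. That chain does not give $\al_{u,z}=0$. If $u$ moves from $\{z_a,u\}$ to $\{z_b\}$ (with, say, $u<z_a,z_b$), the equation $\al^Tx^\pi=\al^Tx^{\pi'}$ reads $\al_{u,z_a}+\al_{z_b}=\al_{u,z_b}+\al_{z_a}$, so chaining only yields that $\al_{u,z}-\al_z$ is constant in $z$, not that it vanishes. To drive this constant to zero you must also move $u$ into the cluster containing the $Z$-collision, and that transformation brings in a $Z$--$Z$ edge coefficient; so the $U$-edge, $Z$-edge and representative coefficients are intertwined from the start, exactly the obstacle you name but do not resolve.

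The paper avoids this by reversing your order: it first works entirely inside $Z$, using $\mathcal T(\{z_i,z_k\},\{z_j\},\{z_k\})$ and $\mathcal T(\{z_i,z_k\},\{z_j\},\{z_i\})$ to get $\al_{z_j,z_k}=\beta_k$ for all $j<k$ and $\beta_k-\al_{z_k}=\beta_j-\al_{z_j}$. Only then does it bring in $u_1$, via $\mathcal T(\{u_1,z\},\{z'\},\{u_1\})$ and $\mathcal T(\{u_1,z\},\{z',z''\},\{u_1\})$, obtaining $\al_{u_1,z_h}=0$ for $h\geq 2$ together with $\al_{u_1,z_1}+\al_{z_h}=\al_{z_1}$; these two systems combined force all $\beta_h$ equal and all $\al_{z_h}$ equal. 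The remaining coefficients are then dispatched case by case on $|U|$. Your sketch would become correct if you reorganised it along these lines; as written, the ``first block'' cannot be completed in isolation.

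A minor point: by reading the hypothesis as $|Z|=K+1$ you make condition~(iii) vacuous, but the paper's proof (and Lemma~\ref{le:val_dep}) actually treats the full range $|Z|\in\{K+1,\dots,2K-1\}$, with each partition in $F_Z$ having $r=|Z|-K$ clusters containing two $Z$-vertices; the transformations above are chosen so that only one such ``collision'' cluster is touched at a time.
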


\begin{lemma}Let $V_1$ and $V_2$ be two disjoint subsets of
  $V$ and let $Z$ be a subset of $V$ which satisfies the conditions of
  Theorem~\ref{th:dependent}. Then, there exists
a $K-$partition in $F_Z$ which includes $V_1$ and $V_2$ if $\s{|V_1\cap
  Z|,|V_2\cap Z|}$ is equal to $\s{1,2}$.
\label{le:val_dep}
\end{lemma}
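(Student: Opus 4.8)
The plan is to exhibit explicitly, for any admissible $V_1,V_2$, a $K$-partition of $V$ two of whose clusters are $V_1$ and $V_2$ and whose incidence vector satisfies~\eqref{eq:Z} with equality; this mirrors the canonical construction described just before Theorem~\ref{th:dependent} and pictured in Figure~\ref{fig:dependent}. Write $|Z|=K+r$ with $1\le r\le K-1$ (i.e. $q=1$ in~\eqref{eq:Z}), so that the right-hand side of~\eqref{eq:Z} equals $r$. A $K$-partition lies in $F_Z$ as soon as exactly $r$ of its clusters meet $Z$ in two vertices and the remaining $K-r$ meet $Z$ in exactly one vertex: indeed then $x(E(Z))=\sum_i\binom{|Z\cap C_i|}{2}=r\binom{2}{2}+(K-r)\binom{1}{2}=r$, which, together with the validity of~\eqref{eq:Z} noted before Theorem~\ref{th:dependent}, means the inequality is tight. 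So it suffices to produce a $K$-partition of this ``canonical shape'' in which $V_1$ and $V_2$ appear as clusters.

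By hypothesis one of $V_1,V_2$ meets $Z$ in a single vertex and the other in two; assume without loss of generality $|V_1\cap Z|=2$ and $|V_2\cap Z|=1$, the other case being identical after swapping names. Put $W=V\setminus(V_1\cup V_2)$. Then $|Z\cap W|=|Z|-3=K+r-3$, and since $r\ge 1$ and $K\ge 3$ (the latter because full-dimensionality forces $K\in\{3,\dots,n-2\}$ by Theorem~\ref{th:dim}) we get $|Z\cap W|\ge K-2\ge 1$; in particular $W\neq\emptyset$. I would then split $W$ into exactly $K-2$ non-empty clusters $C_3,\dots,C_K$ so that $r-1$ of them contain two vertices of $Z\cap W$, the other $K-r-1$ contain exactly one vertex of $Z\cap W$, and the vertices of $W\setminus Z$ are distributed arbitrarily among these $K-2$ clusters (for instance all into $C_3$). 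This is possible: the prescribed numbers of $Z$-vertices add up correctly, $2(r-1)+(K-r-1)=K+r-3=|Z\cap W|$, and since every one of the $K-2$ clusters receives at least one vertex of $Z$ it is non-empty. The collection $\{V_1,V_2,C_3,\dots,C_K\}$ is then a $K$-partition of $V$ containing $V_1$ and $V_2$, and exactly $r$ of its clusters (namely $V_1$ and the $r-1$ designated among $C_3,\dots,C_K$) meet $Z$ in two vertices while the other $K-r$ meet it in one; hence it belongs to $F_Z$, as required.

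I do not expect a genuine obstacle here: the whole argument is a feasibility and counting check. The mild points to watch are the degenerate values of $r$ — when $r=1$ (which includes the case $|Z|=K+1$) none of $C_3,\dots,C_K$ is a two-$Z$-vertex cluster and each carries exactly one vertex of $Z\cap W$, while when $r=K-1$ all of them carry two — and the verification that $W$ can be cut into $K-2$ non-empty pieces, which is exactly what $|Z\cap W|\ge K-2\ge 1$ guarantees. Note also that conditions (i) and (ii) of Theorem~\ref{th:dependent} are not used in this lemma; only condition (iii) (through $1\le r\le K-1$) and full-dimensionality enter.
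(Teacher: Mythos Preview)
Your proof is correct and follows essentially the same construction as the paper: set $C_1=V_1$, $C_2=V_2$, distribute the remaining $K+r-3$ vertices of $Z$ so that $r-1$ of the clusters $C_3,\dots,C_K$ receive two and the rest receive one, and dump the leftover vertices of $U$ into one of these clusters. You are in fact more careful than the paper in verifying the counting $2(r-1)+(K-r-1)=|Z\cap W|$ and the non-emptiness of each cluster, and your closing remark that only condition~(iii) of Theorem~\ref{th:dependent} is actually used is a correct and useful observation.
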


\begin{proof}
Given the bounds  on the size of $Z$,  $q$ is necessarily
  equal  to  one  and  $r\in\s{1,\hdots,  K-1}$.   Consequently,  each
  $K$-partition included  in $F_Z$ contains at least  one cluster with
  exactly one  vertex in $Z$ and  at least one cluster  with exactly two
  vertices in $Z$.

  Let $\pi=\s{C_1,\hdots, C_K}$ be the $K$-partition such that:
  \begin{itemize}
  \item $C_1=V_1$ and $C_2=V_2$.
  \item Clusters $C_3$ to $C_{r+1}$ each contains $q+1$ vertices from $Z$.
  \item Clusters $C_{r+2}$ to $C_K$ each contains $q$ vertices from $Z$.
  \item The vertices  in $U$ which are not included  in $V_1$ or $V_2$
    are in $C_K$.
  \end{itemize}

This construction is always possible since $|Z|$ is equal to
$qK+r$. It can easily be checked~-- by computing $x^\pi(Z)$~-- that $\pi$
is in $F_Z$.
\end{proof}

Each   transformation   $\mathcal  T(C_1,C_2,R)\mapsto  \s{C'_1,C'_2}$,
considered in the proof of Theorem~\ref{th:dependent} is such that the
couples $(C_1,C_2)$ and $(C'_1,C'_2)$ satisfy the conditions
imposed on $V_1$ and $V_2$ in
Lemma~\ref{le:val_dep}.    This   ensure    the   validity    of   the
transformations. We now present the proof of Theorem~\ref{th:dependent}.

\begin{proof}

  If the first condition of the theorem is not satisfied, the three first vertices are
  in $Z$  and cannot  be in the  same cluster.  Consequently  $P_Z$ is
  included    in    the    hyperplane   defined    by    $\sum_{i=4}^n
  x_i-x_{1,2}-x_{1,3}-x_{2,3}=K-3$.  If  (ii) is false,  the vertices $u$
  which are greater than $z_{K+1}$ cannot be representative since each
  cluster  contains  at least  one  element  of  $Z$. Thus,  $P_Z$  is
  included    in   the   hyperplanes:    $x_u=0\;\forall   u>z_{|Z|}$.
  Eventually, if $Z$ contains  more than $2K-1$ vertices, each cluster
  necessarily include at least  two vertices from $Z$. Thus, $z_{|Z|}$
  cannot be a  representative and $F_Z$ is included  in the hyperplane
  induced by $x_{z_{|Z|}}=0$.

  % Si  la première  condition  du théorème  n'est  pas satisfaite,  les
  % sommets $1$, $2$ et $3$ sont dans $Z$ et ne peuvent donc pas figurer
  % dans  une  m\^eme partie.  En  conséquence,  $F_Z$  est inclus  dans
  % l'hyperplan            définit           par           $\sum_{i=4}^n
  % x_i-x_{1,2}-x_{1,3}-x_{2,3}=K-3$.  Si (ii) n'est pas vérifiée, chaque
  % sommet $u$ supérieur  à $z_{|Z|}$ ne peut \^etre  le représentant de
  % sa partie puisque chaque partie  contient au moins un sommet de $Z$.
  % En   conséquence,   $F_Z$  est   inclus   dans   les  hyperplans   :
  % $x_u=0,\;\forall u>z_{K+1}$. Enfin, si l'ensemble $Z$ comporte plus de
  % $2K-1$ sommets, chaque partie  contient nécessairement au moins deux
  % sommets de $Z$ et il est ainsi impossible que $z_{|Z|}$ soit un représentant. $F_Z$ est,
  % dans ce cas, inclue dans l'hyperplan induit par $x_{z_{|Z|}}=0$.

  % Every $K-$partition  in $P_Z$  contains exactly one  activated edge
  % between two  elements in  $Z$. As a  consequence three vertices  of $Z$
  % cannot be in the same cluster and since $|Z|=K+1$ each cluster contains at least one
  % vertex in $Z$.  

Let  $H=\{ x\in\mathbb  R^{|E|+|V|-3}$ $|  \;\al^T x  = \al_0\}$  be a
hyperplane which includes $P_Z$ and let $z_i<z_j<z_k$ be three elements
of     $Z$.    The     transformation     $\mathcal    T(\s{z_i,z_k},$
$\s{z_j},\s{z_k})$, first shows: $\al_{z_i,z_k}=\al_{z_j, z_k}$.
Thus, for a given $k$ and for all $j\in\s{1,\hdots, k-1}$ the coefficients $\al_{z_j,z_k}$ are equal to a constant,
referred to as $\beta_k$. % Let $\beta_2$
  % corresponds to $\al_{z_1, z_2}.$

  For all $j$ and $k$ greater than $i$, $\trd{z_i, z_k}{z_j}{z_i}$ gives 
  \begin{equation}
    \label{eq:z1}
    \beta_k-z_k=\beta_j-z_j.
  \end{equation}

Let  $z,z'$  and  $z''$  be   three  distincts  elements  of  $Z$.  The
transformation $\trd{u_1,z}{z'}{u_1}$ leads to
$\al_{z'}+\al_{u_1,z}=\al_{z}+\al_{u_1,  z'}$.  This result   and
$\mathcal T(\s{u_1,z},$ $\s{z',z''},\s{u_1})$ give for all $h\in\s{2,\hdots,K+1}$: $\al_{u_1,z_h}=0\,$ and
\begin{equation}
\al_{u_1,z_1} + \al_{z_h}=\al_{z_1}.\label{eq:z3}
\end{equation}

From     equations~(\ref{eq:z1})      and~(\ref{eq:z3}),     we     obtain
that for  all $h\in\s{2,\hdots,K+1}$, the  representative coefficients
of $z_h$ are equal and that the same applies to the $\beta_h$.

If $|U|$ is equal to one, the proof is over. Indeed, in that case
$z_2$ is lower  than four and thus $\al_{z_2}$ is equal  to zero, which gives via
equation~(\ref{eq:z3}) $\al_{u_1, z_1}=0$.

If $|U|$ is greater than two,  we then prove that $\al_{u,z}$ is equal
to zero  for all $u\in U\backslash \s{u_1}$  and all $z\in Z$.  This is obtained
thanks to $\mathcal T(\s{u_1, u, z_1},\s{z},$ $\s{u_1, u})$, $\trd{u_1, u, z_1}{z,
  z'}{u_1, u}$ and equation~(\ref{eq:z3}).

We  show   that  $\al_{z_1}$  is  equal  to   $\al_{z_2}$,  thanks  to
$\trd{u_2, z_2}{z_1}{u_2}$ which leads through equation~(\ref{eq:z3}) to
$\al_{u_1, z_1}=0$.

If $|U|$ is equal to two, $\al_z$  is equal to zero, and it remains to
prove that  $\al_{u_1, u_2}$ is  equal to zero,  which can be  done by
$\trd{u_1, u_2, z_2}{z_1}{u_2}$.

Otherwise, for a given $u$ in $U$, let $U'$ be a subset of $U\backslash \s{u}$ which contains $u_1$ or
$u_2$. The transformation $\trd{u, z, U'}{n}{u}$ gives 
\begin{equation}
\zsum_{u'\in
  U'} \al_{u,u'} +\al_z =\al_u\,\mbox{ }\forall z\in Z.\label{eq:z7}
\end{equation}
This equation shows that the sum of $\al_{u,u'}$ is 
equal to a  constant for any possible $U'$. Let $u'$  and $u''$ be two
vertices in $U\backslash\s{u}$. By successively choosing
$U'$ equal to $\s{u''}$
and $\s{u',u''}$ we obtain: $\al_{u,u'}=0$.

Eventually,   equation~(\ref{eq:z7})   gives:  $\al_z=\al_u\,$ $  \forall
(u,z)\in  U\times Z$.   Since  $\al_{u_1}$  is equal  to  zero, the  same
applies to the other representative variables.

\end{proof}

\section{Strengthened triangle inequalities}
\label{sec:strength}

Theorem~\ref{th:triangle} states that inequalities~(1)
are  not  facet-defining  if  $s$  is greater  than  both  $t_1$  and
$t_2$. However, they can be strengthened by adding the term $x_s$ to the
left side  of the inequality whenever $s$ is greater  than three (otherwise
$x_s$ is equal to zero):

\begin{equation}
  \tag{2'}\label{eq:reinforcement}
  x_{s,t_1}+x_{s,t_2}-x_{t_1,t_2} + x_s\leq 1.
\end{equation}

For three distinct vertices $s$,  $t_1$ and $t_2$, let $P_{s,t_1,t_2}$ be
the face of  $P_{n,K}$ defined by equation~(\ref{eq:reinforcement}). 

\begin{theorem}
  Let $s,t_1$  and $t_2$ be three  vertices in $V$  such that $s>t_2>t_1$
  and $s>3$. When $P_{n,K}$ is full-dimensional the inequality
  $x_{s,t_1}+x_{s,t_2}-x_{t_1,t_2}+x_s\leq 1$ is facet-defining if
  and only if $(t_2>3)$ or $(K\leq n-3)$.
  \label{th:triangle_reinforcement}
\end{theorem}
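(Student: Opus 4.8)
\emph{Necessity.} Suppose the stated condition fails; since $P_{n,K}$ is full-dimensional this forces $K=n-2$ and $t_2\le 3$, so $s>3\ge t_2>t_1$ gives $\{t_1,t_2\}\subset\{1,2,3\}$, and I write $\{t_1,t_2,r\}=\{1,2,3\}$. I would first check that for every $K$-partition $\pi$ with $x^\pi\in P_{s,t_1,t_2}$ the set $\{1,2,3\}$ is not contained in a single cluster: if $x^\pi_{t_1,t_2}=0$ this is clear, and otherwise inequality~\eqref{eq:reinforcement} at equality forces $s,t_1,t_2$ into one cluster $C$ with $x^\pi_s=0$, so if $r\in C$ too then $C\supseteq\{1,2,3,s\}$ would contain at least three non-representative vertices, impossible since $n-K=2$. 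Now, after substituting $x_3=K-2+x_{1,2}-\sum_{i=4}^n x_i$, inequality $(3')$ reads exactly as the lower-representative inequality $x_3+x_{1,3}+x_{2,3}\ge 1$, which is tight precisely when $1,2,3$ do not all lie in one cluster. Hence $P_{s,t_1,t_2}$ lies in the hyperplane $\{\sum_{i=4}^n x_i-x_{1,2}-x_{1,3}-x_{2,3}=K-3\}$, which is not proportional to the coefficient vector of~\eqref{eq:reinforcement} (it has nonzero entries on representative variables $x_j$, $j\neq s$), so $P_{s,t_1,t_2}$ is not a facet.

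\emph{Sufficiency.} Assume now $t_2>3$ or $K\le n-3$, and suppose $P_{s,t_1,t_2}\subseteq H=\{x:\al^Tx=\al_0\}$; I must show $(\al,\al_0)$ is a scalar multiple of the coefficient vector $(\omega,\omega_0)$ of~\eqref{eq:reinforcement}. I would fix a base $K$-partition $\pi^0\in P_{s,t_1,t_2}$ in which $\{s\}$ is a cluster (so $x_s=1$ and $x_{s,t_1}=x_{s,t_2}=x_{t_1,t_2}=0$), $t_1$ and $t_2$ lie in two further distinct clusters, and at least two clusters have size $\ge 2$; this is possible because $P_{n,K}$ is full-dimensional and, when $\{t_1,t_2\}\subseteq\{1,2,3\}$, the slack $K\le n-3$ leaves at least three non-representative vertices to spare. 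Every transformation used below acts either on clusters disjoint from $\{s,t_1,t_2\}$, or is one of the moves sending $t_1$ (resp.\ $t_2$) into the cluster of $s$, or the move gathering $\{s,t_1,t_2\}$ into one cluster; each of these preserves $x_{s,t_1}+x_{s,t_2}-x_{t_1,t_2}+x_s$ and therefore keeps the partition in $P_{s,t_1,t_2}$.

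The first block of steps re-runs the transformations from the proof of Theorem~\ref{th:dim}, together with Lemmas~\ref{lemma1}--\ref{lemma5}, keeping $s,t_1,t_2$ fixed in their clusters, to obtain $\al_{i,j}=0$ for all $ij\in E\setminus\{12,13,23,st_1,st_2,t_1t_2\}$, $\al_{1,2}=\al_{1,3}=\al_{2,3}\zdef\beta$, and $\al_i=-2\beta$ for every representative coefficient $i\neq s$ (the coefficients $\al_{t_1},\al_{t_2}$, when $t_1,t_2\ge 4$, being handled by one further move sliding $t_1$, resp.\ $t_2$, into the cluster of vertex $1$). Applying Lemma~\ref{lemma4} with $\{1,3\}\subset C_1$, $r_2=2$ and $r'_2\ge4$ gives $\al_{1,2}=\beta$, and Lemma~\ref{lemma4} with $1\in C_1$, $2\in C_2$, $3\in C_3$ gives $2\beta-4\beta=0$, so $\beta=0$ and every representative coefficient but $\al_s$ vanishes. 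The move sending $t_1$ into the cluster of $s$ turns $x_{s,t_1}$ from $0$ to $1$ and $x_s$ from $1$ to $0$ and changes nothing else, giving $\al_{s,t_1}=\al_s$; symmetrically $\al_{s,t_2}=\al_s$; and comparing $\pi^0$ with the partition in which $\{s,t_1,t_2\}$ forms one cluster gives $\al_{s,t_1}+\al_{s,t_2}+\al_{t_1,t_2}=\al_s$, hence $\al_{t_1,t_2}=-\al_s$. Evaluating $\al^Tx$ at $\pi^0$ gives $\al_0=\al_s$, so $(\al,\al_0)=\al_s\,(\omega,\omega_0)$ and, by Theorem~3.6 of Section~I.4.3 of~\cite{nemhauser1988integer}, $P_{s,t_1,t_2}$ is a facet.

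The main obstacle is the case $\{t_1,t_2\}\subseteq\{1,2,3\}$: there, most of the transformations borrowed from the dimension proof move precisely the vertices $t_1$ or $t_2$ and would place them in a common cluster or together with $s$, leaving $P_{s,t_1,t_2}$. Each such step must be replaced by a variant acting on auxiliary clusters built from the free vertices guaranteed by $K\le n-3$, while keeping $s$ a representative and $t_1,t_2$ in distinct clusters; verifying that these substitutes stay inside the face and still collectively force $\beta=0$ and the vanishing of all stray coefficients is the delicate part of the argument.
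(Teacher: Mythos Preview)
Your necessity argument is correct and is essentially the paper's.

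The sufficiency sketch, however, has a genuine gap. You propose to re-run the dimension-proof transformations ``keeping $s,t_1,t_2$ fixed in their clusters'' and claim this yields $\al_{i,j}=0$ for all $ij\in E\setminus\{12,13,23,st_1,st_2,t_1t_2\}$. But with $s$ frozen in the singleton $\{s\}$, Lemmas~\ref{lemma1}--\ref{lemma5} cannot be applied with $s$ in the role of a moved vertex, so inside that block you have no mechanism for proving $\al_{s,j}=0$ when $j\in U=V\setminus\{s,t_1,t_2\}$; and the later idea ``slide a vertex into the cluster of $s$'' does not help for $j\notin\{t_1,t_2\}$, since the resulting partition has $x_{s,t_1}+x_{s,t_2}-x_{t_1,t_2}+x_s=0$ and lies outside the face. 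More seriously, you yourself flag that when $\{t_1,t_2\}\subseteq\{1,2,3\}$ the borrowed transformations move $t_1$ or $t_2$ and leave $P_{s,t_1,t_2}$, and you say only that ``substitutes'' exist whose verification ``is the delicate part of the argument''. That is precisely the part that must be written out; without it you have a strategy, not a proof.

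The paper sidesteps both difficulties by not mimicking the dimension proof. It first shows, by an argument parallel to Lemma~\ref{le:uv} (whose proof already contains the full case split on how $\{1,2,3\}$ meets $S$, $T$, $U$), that $\al_{u,v}=0$ for every $u\in U$ and every $v\neq u$. After this only the coefficients on $\{s,t_1,t_2\}$ and the representative coefficients survive, and four explicit transformations finish: $\trd{C}{t,u_1}{u_1}$ with $C=\{t',u_2\}$ if $K=n-2$ and $C=\{t',u_2,s\}$ otherwise gives $\al_t=0$; $\trd{t,u_1}{s}{t}$ gives $\al_s=\al_{s,t}$; $\trd{t_1,u_1,u}{s}{t_1,u_1}$ gives $\al_u=0$; and $\trd{s,t,t'}{u}{t}$ gives $\al_{t_1,t_2}=-\al_{s,t}$. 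The heavy case analysis is thus concentrated in an already-proved lemma, and the residual work is short and uniform regardless of whether $t_1,t_2\le 3$.
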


\begin{proof}
  Assume that
  $t_2\leq 3$ and $K=n-2$. Let  $\pi$ be a $K-$partition such that the
  three first  vertices are in the  same cluster.  As $K$  is equal to
  $n-2$ the $K-1$  other clusters are necessarily reduced  to one vertex.
  Hence the  left part of equation~(\ref{eq:reinforcement})  is equal to
  zero and $\pi$ is not in $P_{s,t_1,t_2}$.  Since $1$,
  $2$ and $3$ cannot be together we deduce that
  $P_{s,t_1,t_2}$ is included in the hyperplane defined by
  $\sum_{i=4}^n x_i -x_{1,2}-x_{1,3}-x_{2,3}=K-3$. 

  In the following of the proof the terms $t$ and $t'$ refer to either
  $t_1$
  or $t_2$ with the restriction that $t$ is different from $t'$.

Let $\al^Tx = \al_0$ induce an hyperplan which includes  $P_{s,t_1,t_2}$ for $t_2>3$ or $K\leq
n-3$ and let $U=\s{u_1, u_2, \hdots, u_{|U|}}$ be $V\backslash \s{s,t_1, t_2}$ such
  that   $u_1<u_2<  \hdots<u_{|U|}$.   Similarly  to   the   proof  of
  Lemma~\ref{le:uv} we show that  $\al_{u,v}$ is equal to zero for
  all $u\in U$ and all $v\in V\backslash {\s{u}}$.

  Then we  consider the transformation $\trd{C}{t, u_1}{u_1}$ with $C=\s{t', u_2}$ if
  $K=n-2$ and $C=\s{t', u_2, s}$ otherwise. By noting that $\min\s{t',
    u_2}$ is lower than four, we deduce:  $\al_t=0$.
 
  We  then  prove that  $\al_s$  is  equal  to $\al_{s,t}$  thanks  to
  $\trd{t,u_1}{s}{t}$. The    transformation   $\trd{t_1,    u_1,    u}{s}{t_1,u_1}$   gives:
$\al_u=0,\,\forall u\in U\backslash \s{u_1}$. Eventually we obtain, through $\trd{s,t,t'}{u}{t}$: $\al_{t,t'}=-\al_{s,t}$.
\end{proof}

\section{Paw inequalities}
\label{sec:paw}

 Given a subset $W=\s{a,b,c,d}$ of $V$, we define the \textit{paw
inequality} associated to $W$ by:
\begin{equation}
  \label{eq:paw}
  x_{a,b} + x_{b,c} - x_{a,c} + x_{c,d}+ x_b + x_c  \leq 2.
\end{equation}

Figure~\ref{fig:paw} represents the variables  in this
inequality. 

\begin{figure}[h]
  \centering
  \input{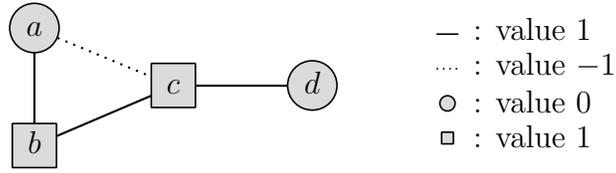}
  \caption{Representation of the coefficients of the paw inequality associated to a subset $\s{a,b,c,d}$ of $V$.}
  \label{fig:paw}
\end{figure}

\begin{lemma}
  Let $K\in\{3,\hdots, n-2\}$. Inequality~\eqref{eq:paw} is valid for
  $P_{n,K}$ if and only if 
  \begin{enumerate}
  \item $a<b$;
  \item $min(b,c,d)=d$.
  \end{enumerate}
\label{le:paw1}
\end{lemma}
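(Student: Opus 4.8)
The plan is to read the paw inequality as a triangle inequality supplemented by the two representative terms $x_b,x_c$ and the chord term $x_{c,d}$, and to control those extra terms by hand. Write $L(x):=x_{a,b}+x_{b,c}-x_{a,c}+x_{c,d}+x_b+x_c$ and $\Delta(x):=x_{a,b}+x_{b,c}-x_{a,c}$, so that $L(x)=\Delta(x)+x_{c,d}+x_b+x_c$. Since $\Delta(x)\le 1$ is exactly the $2$-partition inequality for $S=\{b\}$, $T=\{a,c\}$, which is valid by Lemma~\ref{le:st0}, and since $\Delta(x^\pi)=1$ precisely when $b$ shares a cluster with $a$ or with $c$, the whole issue is to show that for every $K$-partition $\pi$ the terms $x^\pi_{c,d},x^\pi_b,x^\pi_c$ cannot be large while $\Delta(x^\pi)=1$, so that the total stays at most $2$.

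For sufficiency, assume $a<b$ and $d=\min(b,c,d)$, hence $d<b$ and $d<c$. I would fix a $K$-partition $\pi$ and split on $(x^\pi_b,x^\pi_c)\in\{0,1\}^2$. If $x^\pi_b=x^\pi_c=0$, then $L(x^\pi)\le 1+x^\pi_{c,d}\le 2$. If $x^\pi_b=1$, then $a<b$ prevents $a$ from lying in $b$'s cluster, so $x^\pi_{a,b}=0$ and $b$ is the least vertex of its cluster; then (a) if also $x^\pi_c=1$, vertices $b$ and $c$ head distinct clusters so $x^\pi_{b,c}=0$, and $d<c$ forces $x^\pi_{c,d}=0$, giving $L(x^\pi)=2-x^\pi_{a,c}\le 2$; (b) if $x^\pi_c=0$ and $b,c$ lie in a common cluster, then $d<b$ together with $b$ being that cluster's least vertex rules out $x^\pi_{c,d}=1$, so again $L(x^\pi)=2-x^\pi_{a,c}\le 2$; (c) if $x^\pi_c=0$ and $b,c$ lie in different clusters, then $x^\pi_{b,c}=0$ and $L(x^\pi)=1+x^\pi_{c,d}-x^\pi_{a,c}\le 2$. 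Finally, if $x^\pi_b=0$ and $x^\pi_c=1$, then $d<c$ again gives $x^\pi_{c,d}=0$ and $L(x^\pi)=\Delta(x^\pi)+1\le 2$. These cases are exhaustive, which proves validity.

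For necessity I would argue by contraposition, using that $3\le K\le n-2$ leaves enough room to build the required partitions. If $a>b$, take a $K$-partition two of whose clusters are $\{a,b\}$ and $\{c,d\}$, the remaining $n-4\ge K-2$ vertices being distributed over the other $K-2$ clusters; since $b<a$, vertex $b$ represents $\{a,b\}$, so $x_{a,b}=x_{c,d}=x_b=1$ while $x_{b,c}=x_{a,c}=0$, whence $L\ge 3$. If $\min(b,c,d)\ne d$, take a $K$-partition one of whose clusters is $\{b,c,d\}$ (with $a$ placed in another cluster), the remaining $n-3\ge K-1$ vertices distributed over the other $K-1$ clusters; then $x_{b,c}=x_{c,d}=1$, $x_{a,b}=x_{a,c}=0$, and exactly one of $x_b,x_c$ equals $1$ — the one corresponding to the smaller of $b$ and $c$, which is the representative of $\{b,c,d\}$ — so $L=3$. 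In either situation the paw inequality is violated, so its validity forces both $a<b$ and $\min(b,c,d)=d$.

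I expect the only delicate point to be keeping the sufficiency case analysis complete and non-overlapping; within it, the genuinely load-bearing step is case (b), where one must invoke $d<b$ — not merely $d<c$ — to discard the term $x^\pi_{c,d}$, and this is precisely where the second hypothesis $\min(b,c,d)=d$ is needed in full strength rather than just $d<c$.
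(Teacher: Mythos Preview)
Your proof is correct. The sufficiency argument differs in presentation from the paper's: the paper observes that the paw inequality is the sum of the triangle inequality $x_{a,b}+x_{b,c}-x_{a,c}\le 1$ and the upper representative inequality $x_c+x_{c,d}\le 1$ (valid since $d<c$), plus the term $x_b$; this immediately gives validity when $x_b=0$, and when $x_b=1$ the paper argues that those two inequalities cannot both be tight. Your four-way split on $(x_b,x_c)$ unfolds the same logic explicitly rather than via the two-inequality decomposition, and your case~(b) is exactly where the paper's ``both cannot be tight'' step lives. For necessity the constructions are essentially the same; the paper uses $C_2=\{c\}$ rather than your $\{c,d\}$ for the $a>b$ case, but either works.
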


\begin{proof}
% We first  prove that the  inequality is not  valid if any of  the two
% conditions is not satisfied.

If $min(b,c,d)$ is not $d$, the left-hand side of
equation~\eqref{eq:paw} is  equal to  $3$ for any  $K$-partition with a
cluster equal to $\s{b,c,d}$. If $b$ is lower than $a$, then
equation~\eqref{eq:paw} is not satisfied  for any $K$-partition
$\pi=\s{C_1,\hdots, C_K}$ such that $\s{a,b}\subset C_1$ and $C_2=\s{c}$.

The addition of the triangle inequality~$(1)$
\begin{equation}
x_{a,b}+x_{b,c}-x_{ac}\leq   1\label{eq:paw1}
\end{equation}
and   the   lower   representative
inequality~\eqref{eq:lrep}
\begin{equation}
  \label{eq:paw2}
  x_c + x_{c,d}\leq 1
\end{equation}
ensures that the  paw inequality is valid if
$x_b$ is equal to zero. 

If $x_b$ is  equal to one, we show  that~\eqref{eq:paw} is still valid
since  equation~\eqref{eq:paw1}  and  equation~\eqref{eq:paw2}  cannot
both  be tight.   In that  case, $a$  and $d$  cannot be  in  the same
cluster  than  $b$  since  their  indices are  lower.   The  only  way
for~\eqref{eq:paw1} to be tight under  these conditions is for $b$ and
$c$  to be  together.   Equation~\eqref{eq:paw2} is  tight  if $c$  is
representative or  if $c$  and $d$ are  together. In both  cases $x_b$
cannot be equal to one if $b$ and $c$ are together.
\end{proof}

Let $F_P$ be the face of $P_{n,K}$ associated to
inequality~\eqref{eq:paw}. 

\begin{lemma}
Under the  conditions of Lemma~\ref{le:paw1}  the face $F_P$ is  not a
facet if $c<b$ or $K=n-2$.
\label{le:paw2}
\end{lemma}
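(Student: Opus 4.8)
The plan is to show that under either hypothesis $F_P$ is contained in the hyperplane of a second facet-defining inequality of $P_{n,K}$ whose support differs from that of the paw, so that $\dim F_P\le\dim P_{n,K}-2$ and $F_P$ is not a facet. Everything hinges on the decomposition used in the proof of Lemma~\ref{le:paw1}: the left-hand side of~\eqref{eq:paw} equals $T+R+x_b$, where $T=x_{a,b}+x_{b,c}-x_{a,c}$ is the triangle expression centered at $b$ (so $T\le 1$ by~$(1)$) and $R=x_c+x_{c,d}$ is the upper representative expression for the pair $d<c$ (so $R\le 1$), with the additional fact that $T+R\le 1$ whenever $x_b=1$. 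Hence a $K$-partition lies in $F_P$ if and only if either ($x_b=0$, $T=1$, $R=1$) or ($x_b=1$, $T+R=1$).

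First I would treat the case $c<b$. If $x_b=1$ then $b$ is the smallest vertex of its cluster, so neither $a$ (as $a<b$) nor $c$ (as $c<b$) shares that cluster; thus $x_{a,b}=x_{b,c}=0$, $T=-x_{a,c}\le 0$, and $T+R=1$ forces $R=1$ (and $x_{a,c}=0$). If $x_b=0$ then $R=1$ as well. So every $\pi\in F_P$ satisfies $x_c+x_{c,d}=1$, i.e.\ $F_P$ lies in the face of the upper representative inequality $x_c+x_{c,d}\le 1$ for the pair $(d,c)$; when $c\in\{1,2,3\}$ one uses instead the equivalent reformulation obtained after eliminating $x_3$, which is again one of the facet-defining inequalities of Section~\ref{sec:trivial} (of the type treated in Theorem~\ref{th:3-1}). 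By those theorems this inequality is facet-defining whenever $P_{n,K}$ is full-dimensional, its hyperplane is distinct from the paw hyperplane, and the inclusion $F_P\subsetneq\{x_c+x_{c,d}=1\}$ is strict: a $K$-partition in which $c$ is a representative, $b$ sits in a cluster with a strictly smaller vertex other than $a$ and $c$, and no edge of the triangle $\{a,b,c\}$ is activated, has $R=1$ but $T+x_b=0$, hence lies in that face but not in $F_P$. This forces $\dim F_P\le\dim P_{n,K}-2$, so $F_P$ is not a facet.

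The case $K=n-2$ (for which the only situation not already covered is $c>b$, hence $d<b<c$) is the delicate one, and I expect it to be the main obstacle. Here I would argue directly from the rigid shape of $K$-partitions when $K=n-2$: each consists of singletons together with either exactly one triple or with two disjoint pairs. Running the three possibilities for $\pi\in F_P$ above against this shape and tracking which edge- and representative-coordinates of $a,b,c,d$ are then forced, one shows that all such $\pi$ satisfy a common linear equality independent of the paw equality; the natural candidate is again the hyperplane of one of the trivial facet-defining inequalities attached to the vertices of $W$. Two points need care: isolating the correct auxiliary hyperplane from the several sub-cases of the placement of $\{1,2,3\}$ and of $W$, and checking that the chosen inequality is still facet-defining when $K=n-2$ — which rules out the representative bounds $x_v\ge 0$ and leaves an edge-bound or an upper-representative inequality as the only option. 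Once the auxiliary hyperplane is pinned down, the same dimension count as in the first case concludes.
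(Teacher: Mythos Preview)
Your treatment of the case $c<b$ is correct and is exactly the paper's argument: both show that every $\pi\in F_P$ satisfies $x_c+x_{c,d}=1$. You add unnecessary work, though. To conclude that $F_P$ is not a facet it suffices to exhibit \emph{any} hyperplane containing $F_P$ that is linearly independent of the paw hyperplane; you do not need that second hyperplane to come from a facet-defining inequality, nor do you need a strict-inclusion witness. Two independent equalities on a full-dimensional polytope already force $\dim F_P\le\dim P_{n,K}-2$.

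For $K=n-2$ your plan is left incomplete: you correctly reduce to $d<b<c$ and even list ``an edge-bound'' among the candidates, but you never pin the hyperplane down, and you overestimate the difficulty. The paper's argument is essentially a one-liner: every $\pi\in F_P$ has $x_{a,c}=0$. Indeed, since $a<b<c$, if $a$ and $c$ lie in the same cluster then $x_c=0$; running through the two possible shapes of an $(n-2)$-partition (one triple and singletons, or two pairs and singletons) and the handful of placements of $b$ and $d$ relative to the cluster containing $\{a,c\}$, one checks in each case that the left-hand side of~\eqref{eq:paw} is at most~$1$, so such a $\pi$ is not in $F_P$. Hence $F_P\subseteq\{x_{a,c}=0\}$, a hyperplane independent of the paw hyperplane, and $F_P$ is not a facet.
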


\begin{proof}
  If $c$  is lower  than $b$ we  prove that  $F_P$ is included  in the
  hyperplane induced by  $x_c+x_{c,d}=1$. The expression
  \begin{equation}
x_c+x_{c,d}\label{eq:expre}
\end{equation}
  can be equal to $0$, $1$ or $2$.

  If expression~\eqref{eq:expre} is equal to $0$, the solutions in $F_P$ satisfy :
  $x_{a,b} + x_{b,c} - x_{a,c} + x_b = 2$. This equation cannot be true since $b$ has to be greater than both $a$ and $c$ according
  to  Lemma~\ref{le:paw1}  $b$  and   the  condition  of  the  current
  lemma.   The  expression~\eqref{eq:expre} cannot  be  equal to  two
  either since $d$ is lower than $c$.

  As a result expression~\eqref{eq:expre} is necessarily equal to one.

  If $K$ is equal to $n-2$, no $K$-partition  can contain both
  $a$ and $c$ and thus, $F_P$ is included in the hyperplane induced by $x_{a,c}$. 

\end{proof}

\begin{theorem}
Let $K\in\s{3,n-3}$ and $b\in\s{4,\hdots, n}$, $F_P$ is facet defining
of $P_{n,K}$ if and only if
\begin{enumerate}
\item $d<b<c$;
\item $a<b$.
\end{enumerate}
\label{th:paw}
\end{theorem}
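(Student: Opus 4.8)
I would prove both implications; the sufficiency part follows the pattern of Theorems~\ref{th:st} and~\ref{th:dependent}. For necessity, suppose $F_P$ is facet-defining. Then \eqref{eq:paw} is valid, so Lemma~\ref{le:paw1} forces $a<b$ and $\min(b,c,d)=d$, i.e. $d<b$ and $d<c$. Since $K\le n-3$, the $K=n-2$ clause of Lemma~\ref{le:paw2} is vacuous here, so its remaining obstruction must fail, namely $c\not<b$; together with distinctness this gives $b<c$. Hence $d<b<c$ and $a<b$, which are exactly the claimed conditions.

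For sufficiency, assume $d<b<c$, $a<b$, $b\ge 4$ and $K\in\{3,\dots,n-3\}$, and suppose $F_P\subseteq H=\{x\mid\al^Tx=\al_0\}$. I will show $(\al,\al_0)$ is proportional to the paw vector $\omega$, where $\omega_{a,b}=\omega_{b,c}=\omega_{c,d}=\omega_b=\omega_c=1$, $\omega_{a,c}=-1$, $\omega_0=2$, and every other entry is $0$; by Theorem~3.6 of~\cite{nemhauser1988integer} this gives the result. Put $W=\{a,b,c,d\}$ and $U=V\setminus W$, so $|U|=n-4\ge K-1$ exactly because $K\le n-3$. The partition whose first cluster is $W$ and whose clusters $C_2,\dots,C_K$ form an arbitrary split of $U$ into $K-1$ nonempty parts lies in $F_P$, since its paw expression equals $1+1-1+1=2$; in the extreme case $n=K+3$, where this forces $C_2,\dots,C_K$ to all be singletons, I additionally use the partition with first cluster $\{b,c,d\}$ and $\{a\}\cup U$ split into $K-1$ parts one of which has size $2$ (its paw expression is $0+1-0+1+0+0=2$). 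Any transformation acting only on clusters contained in $U$ (or in $\{a\}\cup U$) leaves the paw expression unchanged, hence preserves $F_P$, so Lemmas~\ref{lemma1},~\ref{lemma2},~\ref{lemma4} and~\ref{lemma5} are available.

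The argument then has two substantive stages. First, in the style of Lemma~\ref{le:uv}, I show that any hyperplane containing $F_P$ satisfies $\al_{u,v}=0$ for all $u\in U$ and $v\ne u$, and $\al_u=0$ for all $u\in U$; this combines the internal transformations above with a case distinction on $|U\cap\{1,2,3\}|$, which always lies in $\{1,2,3\}$ since at most $a$ and $d$ of $\{1,2,3\}$ can belong to $W$. Second, once all $U$-coefficients vanish, I pin down the six edge coefficients inside $W$ and the representative coefficients $\al_b,\al_c$ ($\al_a$ and $\al_d$ being genuine variables, or conventionally $0$ when the vertex is $\le 3$) using a handful of explicit transformations: pulling $a$ out of a cluster containing $\{b,c,d\}$, moving $\{c,d\}$ jointly out of a cluster containing $\{a,b\}$, and comparable single-vertex moves among $W$ and $u_1$ — all of which remain in $F_P$ by the configuration analysis behind Lemmas~\ref{le:paw1} and~\ref{le:paw2}. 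These relations force $\al_{b,d}=\al_{a,d}=0$, $\al_{a,b}=\al_{b,c}=\al_{c,d}=-\al_{a,c}=:\lambda=\al_b=\al_c$, and $\al_a=\al_d=0$, so $(\al,\al_0)=\lambda(\omega,\omega_0)$.

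I expect the first stage to be the main obstacle. Unlike the 2-partition setting of Lemma~\ref{le:uv}, membership in $F_P$ is \emph{not} stable under arbitrary relocations of vertices into or out of $W$ — deleting $d$ alone from a cluster $\{a,b,c,d\}$, for instance, drops the paw expression to $1$ — so only a restricted repertoire of transformations is legitimate, and one must interleave the $U$-internal transformations with a few carefully chosen moves that respect the paw expression while tracking where the suppressed vertices $1,2,3$ sit. Once this elimination lemma is established, the computation of the $W$-coefficients in the second stage is routine.
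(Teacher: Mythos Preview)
Your necessity direction is correct and matches the paper's implicit reasoning through Lemmas~\ref{le:paw1} and~\ref{le:paw2}.

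For sufficiency your route differs from the paper's and makes life harder than necessary. The paper's key observation is that any $K$-partition containing a cluster equal to $\{b,c\}$ already lies in $F_P$: since $b<c$ the vertex $b$ is the representative of that cluster, so $x_b+x_{b,c}=2$ while every other term of the paw expression vanishes. Setting $C_3=\{b,c\}$ therefore leaves \emph{all} of $V\setminus\{b,c\}$ free to move among the other clusters, and since $b,c\ge4$ this includes all of $\{1,2,3\}$. The argument of Theorem~\ref{th:dim} then transplants verbatim, giving $\al_{i,j}=0$ for $i\in V\setminus\{b,c,1,2,3\}$ and $j\in V\setminus\{b,c,i\}$, $\al_{1,2}=\al_{1,3}=\al_{2,3}=:\beta$, and $\al_i=-2\beta$ for $i\in V\setminus\{b,c,1,2,3\}$, with no case distinction whatsoever. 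A short list of further transformations (Table~\ref{tab:paw}) then yields $\beta=0$ and pins down every coefficient touching $b$ or $c$.

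Your choice $C_1=W=\{a,b,c,d\}$ instead locks up $a$ and $d$, either of which may lie in $\{1,2,3\}$, so the case analysis you anticipate is genuine and you cannot simply recycle Theorem~\ref{th:dim}. More to the point, your first stage demands $\al_{u,b}=\al_{u,c}=0$ for every $u\in U$, and $U$-internal transformations can never produce these relations since they do not move $b$ or $c$; you would have to supplement them with paw-preserving moves of $b$ or $c$, and those are precisely the moves the paper only needs \emph{after} almost all coefficients are already known to vanish. Your plan is workable in principle, but the $\{b,c\}$-cluster trick is the missing idea that dissolves the obstacle you yourself identify.
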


\begin{proof}
 A $K$-partition  containing a cluster equal  to $\s{b,c}$ satisfies
 the paw  inequality. Thus, by  setting $C_3$ equal to  $\s{b,c}$, one
 can use the same reasoning  as in the proof of Theorem~\eqref{th:dim}
 to  obtain  the  following   relations  on  the  coefficients  of  an
 equation $\al^Tx=\al_0$ satisfied by all the points in $F_P$:
 \begin{itemize}
 \item $\al_{i,j}  = 0$ $\forall  i\in V\backslash\s{b,c,1,2,3}\forall
   j\in V\backslash\s{b,c,i}$;
 \item $\al_{1,2}=\al_{1,3}=\al_{2,3}\zdef\beta$;
 \item $\al_i=-2\beta$ $\forall i\in V\backslash\s{b,c,1,2,3}$.
 \end{itemize}

The value of the remaining  $\al$ coefficients can be obtained through
the transformations represented table~\ref{tab:paw}.

\end{proof}

\begin{table}[h!]
  \centering
  \begin{tabular}{M{4cm}ccc}
    \toprule
    \textbf{Conditions} & \textbf{Transformation}
    & \textbf{Results} \\\midrule
    
    - & \trtikz{$b$}{$c$}{1}{}{$d$}{}{1.5} &
    $\al_{b,c}=\al_{c,d}\zdef\gamma$\\\midrule

    $\forall          i\in          V\backslash\s{a,b,c,d}$          &
    \trtikz{$d\;i$}{$c$}{1}{}{$b$}{}{1.5} & $\al_{c,i}=0$\\\rule[-7pt]{0pt}{34pt}
    & \trtikz{$a\;b$}{$i$}{1}{$c$}{$d$}{}{1.5} & $\al_{b,i}=0$
    \\\midrule

    if    $d\geq   4$   $\forall    e,f\in   \s{1,2,3}$
    $\s{a,b}\subset C_3$& \trtikz{$c$}{$d$}{}{$e$}{$f$}{1}{1.5} & $\beta = 0$\\
    \midrule

    if $d\leq 3$ $\forall e,f\in \s{1,2,3}\backslash\s{d}$ $C_2\subset
    V\backslash\s{b,c,d,e,f}$    $C_3=\s{b,c,d}$    &
    \trtikz{$e$}{$f$}{1}{}{$C_2$}{}{1.5} & $\beta=0$\\\midrule

    $\forall i\in V\backslash\s{a,b,c,d}$  & \trtikz{$a\; i$}{$b$}{1}{}{$c$}{}{1.5}  & $\al_c  + \al_{a,b}
    =\gamma + \al_b$&\\\rule[-7pt]{0pt}{26pt}

     & \trtikz{$a\; i\;d$}{$b$}{1}{}{$c$}{}{1.5} &
    $\al_{b,d} = 0$ \\\rule[-7pt]{0pt}{26pt}

    &   \trtikz{$c\;d$}{$a\;    b$}{1}{}{$i$}{}{1.5}   &   $\al_{a,c}=-\gamma$\\\rule[-7pt]{0pt}{26pt}

    \rule[-1.5ex]{0pt}{4ex} & \trtikz{$b\;c\;d$}{$a$}{1}{}{$i$}{1}{1.5} & $\al_{a,b} = \gamma,\;
    \al_b=\al_c$\\\rule[-7pt]{0pt}{26pt}

    & \trtikz{$b\;c$}{$a\;d$}{1}{}{$i$}{}{1.5} & $\al_b=\gamma$

    \\\bottomrule
  \end{tabular}
  \caption{Transformations  used  in  Theorem~\ref{th:paw}. Each  line
    presents a step of the proof. The last column corresponds to the result.
    column. }
  \label{tab:paw}
\end{table}

\begin{theorem}
Let $K\in\s{3,n-3}$. The face $F_P$ associated to
the  inequality  $x_{a,b}+x_{b,c}- x_{a,c}  +x_{c,d}+x_c  + x_{1,2}  -
\sum_{i=4}^n\leq    4-K$~--    which    corresponds   to    the    paw
inequality~\eqref{eq:paw} for $b$ equal to three~--  is facet for $P_{n,K}$ if and only if
\begin{enumerate}
\item $d<3<c$;
\item $a<3$.
\end{enumerate}
\label{th:paw2}
\end{theorem}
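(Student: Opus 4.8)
The plan is to follow, with $b=3$, the scheme of Theorem~\ref{th:paw}. For the ``only if'' direction: if $a\geq 3$ or $\min(3,c,d)\neq d$ then inequality~\eqref{eq:paw} with $b=3$ is not valid (Lemma~\ref{le:paw1}), while if $c<3$ (hence $c<b$) or $K=n-2$ then $F_P$ is not a facet (Lemma~\ref{le:paw2}); since $K\leq n-3$ is assumed, what remains is exactly $d<3<c$ and $a<3$, which force $\s{a,d}=\s{1,2}$ and $c\geq 4$, so $W=\s{1,2,3,c}$, and by the symmetry exchanging vertices $1$ and $2$ I may assume $a=1$, $d=2$. The ``if'' direction is then proved, as for Theorem~\ref{th:paw}, by a base step in the spirit of the first part of Theorem~\ref{th:dim} followed by a transformation table analogous to Table~\ref{tab:paw}.

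So assume $F_P\subseteq H=\s{x\mid\al^Tx=\al_0}$; the goal is to show $(\al,\al_0)$ is proportional to the coefficient vector $(\omega,\omega_0)$ of the inequality, i.e. (up to a scalar) $\al_{1,3}=\al_{3,c}=\al_{c,2}=\al_{1,2}$, $\al_{1,c}=-\al_{1,2}$, $\al_i=-\al_{1,2}$ for every $i\in\s{4,\dots,n}\setminus\s{c}$, and all remaining coefficients (in particular $\al_{2,3}$, $\al_c$, and every $\al_{3,i}$, $\al_{c,i}$ with $i\geq 4$, $i\neq c$) equal to zero, the value $\al_0$ being then forced since $F_P\neq\emptyset$. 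I would first note that every $K$-partition having $\s{3,c}$ as a cluster lies in $F_P$ (the left side of~\eqref{eq:paw} is then $0+1-0+0+1+0=2$), and that, because $K\leq n-3$, so does every $K$-partition having $\s{1,2,3,c}$ as a cluster ($1+1-1+1+0+0=2$). Restricting to the first family and running the argument of the first part of Theorem~\ref{th:dim} on the $K-1$ remaining clusters --- whose vertex set is $\s{1,2}\cup(\s{4,\dots,n}\setminus\s{c})$, with $1$ and $2$ playing the role of the two lowest special vertices and vertex $3$ frozen inside $C_3=\s{3,c}$ --- yields the base relations $\al_{i,j}=0$ for every $ij$ with $i,j\notin\s{3,c}$ and $\s{i,j}\neq\s{1,2}$, and $\al_i=-\al_{1,2}$ for all $i\in\s{4,\dots,n}\setminus\s{c}$. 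These differ from the base relations of Theorem~\ref{th:paw}: since vertex $3$ is carved out one does \emph{not} get $\al_{1,3}=\al_{2,3}=\al_{1,2}$, and the representative coefficients are tied to $\al_{1,2}$ rather than to $2\al_{1,2}$ (recall $\al_3=0$, as $x_3$ is not an explicit variable of $P_2$).

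It remains to determine $\al_{3,c},\al_{1,3},\al_{2,3},\al_{1,c},\al_{2,c},\al_c$, the $\al_{3,i},\al_{c,i}$ with $i\geq 4$, $i\neq c$, and $\al_{1,2}$ itself, by the analogue of Table~\ref{tab:paw} with $b=3$. I expect: $\mathcal T(\s{3,c},\s{2},\s{c})$ gives $\al_{3,c}=\al_{c,2}\zdef\gamma$; transformations sliding a dummy $i\geq 4$ in and out of a cluster containing $\s{3,c}$, respectively $\s{2,c}$, give $\al_{3,i}=\al_{c,i}=0$; a transformation involving the cluster $\s{1,2,3,c}$ and a singleton yields $\al_{1,3}=\gamma$ together with $\al_3=\al_c$, hence $\al_c=0$, and a variant yields $\al_{1,c}=-\gamma$ and $\al_{2,3}=0$; and a last transformation pairing a $\s{1,2,3,c}$-type cluster with a singleton $\s{i}$ relates $\al_{1,2}$ to $\gamma$ (using $\al_{c,2}=\gamma$) and forces $\al_{1,2}=\gamma$, hence $\al_i=-\gamma$ for all $i\geq 4$, $i\neq c$. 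Collecting these relations gives $(\al,\al_0)=\gamma\,(\omega,\omega_0)$, as required.

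I expect the real difficulty to be bookkeeping rather than a new idea: vertex $3$ is simultaneously a paw vertex and one of the three substituted vertices, so neither the base relations nor several rows of the transformation table transfer verbatim from Theorem~\ref{th:paw} --- in particular, the rows that there produce $\al_{1,2}=0$ cannot be reused, since any cluster $\s{1,3}$ is incompatible with the cluster $\s{3,c}$. For each transformation one must verify that the new $K$-partition stays in $F_P$ \emph{and} track whether the representative status of $3$ --- and of $c$, $1$, $2$ --- changes, a change in the status of $3$ not being visible directly but only through the combination $x_{1,2}-\sum_{i\geq 4}x_i$. Keeping all of this consistent is the main obstacle; everything else follows the pattern already established for Theorems~\ref{th:paw},~\ref{th:3-1} and~\ref{th:3-2}.
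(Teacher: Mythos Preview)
Your overall scheme is correct and matches the paper's: necessity via Lemmas~\ref{le:paw1}--\ref{le:paw2}, sufficiency via a base step plus a transformation table. The symmetry reduction to $a=1$, $d=2$ is valid (the transposition $(1\;2)$ acts linearly on $P_{n,K}$ by $x_{1,j}\leftrightarrow x_{2,j}$ for $j\geq 3$, fixing every $x_i$ with $i\geq 4$), though the paper does not use it and instead keeps both cases $d=1$ and $d=2$ in parallel.

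The substantive difference is the base step. You fix $C_3=\{3,c\}$ and rerun the Theorem~\ref{th:dim} argument on the remaining $K-1$ clusters over $V'=\{1,2\}\cup(\{4,\dots,n\}\setminus\{c\})$, exactly as Theorem~\ref{th:paw} does with $C_3=\{b,c\}$. The paper does \emph{not} do this for $b=3$: it applies Lemmas~\ref{lemma1} and~\ref{lemma2} directly with configurations in which vertex $3$ sits in $C_2$ (paired with the dummy $i$) while $c$ is placed either alone in a third cluster or inside $C_2$, depending on whether $d=1$ or $d=2$. This keeps all three substituted vertices $1,2,3$ available as representatives and yields $\al_{i,j}=\al_{1,i}=\al_{2,i}=0$ for $i,j\in V\setminus\{1,2,3,c\}$ in one pass, after which Table~\ref{tab:paw3} fixes the common value $\beta:=\al_i$ and the rest.

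Your route works too, but note a wrinkle you glossed over: once $3$ is frozen, the Theorem~\ref{th:dim} machinery runs with only \emph{two} special vertices ($1$ and $2$), and the third-lowest vertex $m=\min(\{4,\dots,n\}\setminus\{c\})$ carries a genuine coefficient $\al_m$. The argument still gives $\al_{1,m}=\al_{2,m}=:\beta'$ (Lemma~\ref{lemma5}) and $\al_j=\al_m-2\beta'$ for $j\in V'\setminus\{1,2,m\}$ (Lemma~\ref{lemma2}); forcing $\beta'=0$ requires comparing two applications of Lemma~\ref{lemma4} with $\{r_1',r_2'\}$ avoiding $m$, which needs $|V'\setminus\{1,2,m\}|\geq 2$, i.e.\ $n\geq 7$. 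For $n=6$ (hence $K=3$) the base step as you describe it leaves $\beta'$ undetermined, and your sketched table does not obviously kill it either; you would need one extra transformation (for instance one using a cluster containing $\{1,2,3,c\}$ and moving $m$) to close this small case. The paper's choice to let $3$ roam in the base step sidesteps this entirely.
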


\clearpage

\begin{proof}
  Assume  that  $F_P$  is   included  in  the  hyperplane  induced  by
  $\al^Tx=\al_0$.  Let   $i$  and  $j$   be  two  distinct   nodes  in
  $V\backslash{\s{a,b,c,d}}$. Lemma~\ref{lemma1} can be used with $\s{1,i}\subset
  C_1$, $\s{2,j}\subset C_2$ and $C_3=\s{b,c}$ to prove that $\al_{i,j}=0$.  

  To show  that $\al_{2,i}$ is equal  to zero we apply  the same lemma
  with $\s{a,d}\subset C_1$, $\s{b,i}\subset C_2$. To ensure that
  the $K$-partition considered are in  $F_P$, $C_3$ is equal to $c$ if
  $d=1$ and $\s{c}$ is in $C_2$ otherwise.

  We then use Lemma~\ref{lemma2} with $\s{1,b}\subset C_1$ and
  $\s{2,i}\subset C_2$ to show that $\al_{1,i}$ is equal to zero. This
  time  the  validity of  the  $K$-partitions  is  ensured by  setting
  $C_3=\s{c}$ if $d$ is equal to two and $c\subset C_1$ otherwise.

  The  value  of the  remaining  $\al$  coefficients  can be  obtained
  through the transformations represented table~\ref{tab:paw3}.

\begin{table}[h!]
  \centering
  \begin{tabular}{M{4cm}ccc}
    \toprule
    \textbf{Conditions} & \textbf{Transformation}
    & \textbf{Results} \\\midrule

  $\s{b,c,d}\subset C_3$ &
  \trtikz{$i$}{$a$}{1}{}{$j$}{}{1.5}      &      $\al_i=\al_j\zdef\beta$     &
  \\\midrule
  
  $\s{b,c}\subset C_3$ & \trtikz{$a$}{$i$}{}{$j$}{$d$}{1}{1.5} & $\al_{a,d} = -\beta$\\\midrule

  $\s{c,d}\subset   C_3$  &   \trtikz{$a$}{$b$}{}{$j$}{$i$}{1}{1.5}  &
  $\al_{b,i}=0$\\\rule[-7pt]{0pt}{22pt}
  
  & \trtikz{$a$}{$b$}{1}{}{$i$}{}{1.5} & $\al_{a,b} = -\beta$\\\midrule
  
    - & \trtikz{$b$}{$c$}{1}{}{$d$}{}{1.5} &
    $\al_{b,c}=\al_{c,d}\zdef\gamma$\\\rule[-7pt]{0pt}{22pt}

              &
    \trtikz{$d\;i$}{$c$}{1}{}{$b$}{}{1.5} & $\al_{c,i}=0$ \\\rule[-7pt]{0pt}{22pt}

        & \trtikz{$a\; i$}{$b$}{1}{}{$c$}{}{1.5}  & $\al_c -\beta
    =\gamma$& \\\rule[-7pt]{0pt}{26pt}

     & \trtikz{$a\; i\;d$}{$b$}{1}{}{$c$}{}{1.5} &
    $\al_{b,d} = 0$ \\\rule[-7pt]{0pt}{26pt}

    &        \trtikz{$c\;d$}{$a\;        b$}{1}{}{$i$}{}{1.5}        &
    $\al_{a,c}=-\gamma$\\\rule[-7pt]{0pt}{26pt}

    & \trtikz{$b\;c\;d$}{$a$}{1}{}{$i$}{1}{1.5} & $\gamma = -
    \beta,\;\al_c=0$\\

 \bottomrule
  \end{tabular}
  \caption{Transformations  used  in  Theorem~\ref{th:paw2}. Each  line
    presents a step of the proof. The last column corresponds to the result.}
  \label{tab:paw3}
\end{table}

\end{proof}

\section{Numerical experiments}
\label{sec:num}

% Note :  ajouter explications sur 3 series de  graphes serie A, serie
% B, serie C

% Note  : refaire les tables  (garder tout pour  solutions entières et
%  couper à  K=  10 pour  les  autres, mettre  tous  les graphes  pour
% séries A et B et juste resultats de paw pour serie C)
 
% Note : faire demo paw pour b=3

In this  section we study the  strength of our formulation  and of the
reinforcements with facets of the previous sections. We consider three
data sets generated randomly, and we believe the instances to be quite
difficult as  there are  no preexisting classes  to detect.  Data sets
$D_1$, $D_2$, $D_3$ all contain  100 instances formed from complete
graphs. In  $D_1$, $D_2$  and $D_3$,  the edge
weights are respectively in $[0, 500]$, $[-250,250]$ and $[-500,0]$.

We  first compare  the  value  of the  linear  relaxation from  our
formulation  and   the  one  from   the  formulation  of   Chopra  and
Rao~\cite{chopra1993partition}                                    (also
in~\cite{kaibel2011orbitopal,fan2010linear}).  The  results  over  the
three  data sets  are  displayed in  tables~\ref{tab:D1},~\ref{tab:D2}
and~\ref{tab:D3}. In each table and  for each couple $(n,K)$ the value
corresponding  to   formulation  $(P_2)$   is  the  second   one.  Our
formulation gives better relaxation values in all cases except when is
equal to $2$.

We now only focus on formulation $(P_2)$. Tables~\ref{tab:isRoot} and \ref{tab:isRoot2} show the number of
instances whose  linear relaxation  gives an optimal  integer solution
for data set $D_1$ and $D_2$.  No optimal solution is obtained for the
instances  of  $D_3$.  Data  set  $D_1$  has  the  hardest  instances  in
practice.   $D_2$  instances   have   weights  of   both  signs,   like
in~\cite{grotschel1989cutting},  and $D_3$  corresponds  to a  variant
which it  considered to be easier  (minimizing a cut with  $K$ parts and
positive weights).

To  evaluate the  efficiency of  a family  of inequalities,  we  use a
separation  algorithm to  add  some  of them  to  the formulation  and
observe  the percentage  of improvement  of  the value  of the  linear
relaxation. This  necessitates the definition  of separation algorithms
for each of the family considered.

\begin{table}[h!]\renewcommand{\arraystretch}{1.2}\centering \begin{tabular}{M{0.5cm}*{19}{r@{\hspace{0.5cm}}}r@{}}\toprule\multirow{2}{*}{\textbf{n}} & \multicolumn{8}{c}{\textbf{K}} \\& \textbf{2} & \textbf{3} &\textbf{4} &\textbf{5} &\textbf{6} &\textbf{7} &\textbf{8} &\textbf{9} &\textbf{10} \tabularnewline\hline
\multirow{2}{*}{\textbf{10}} 	&	1102&	114&	40&	16&	7&	3&	2&	1&	0\\
 &	978&	679&	462&	295&	172&	88&	39&	11&	0\\\hline
\multirow{2}{*}{\textbf{11}} 	&	1268&	123&	45&	19&	8&	4&	2&	1&	0\\
 &	1033&	737&	522&	356&	227&	135&	71&	33&	10\\\hline
\multirow{2}{*}{\textbf{12}} 	&	1408&	140&	49&	20&	9&	4&	2&	1&	0\\
 &	1124&	816&	593&	422&	288&	187&	112&	59&	27\\\hline
\multirow{2}{*}{\textbf{13}} 	&	1529&	125&	46&	20&	8&	4&	2&	1&	0\\
 &	1177&	874&	649&	476&	341&	231&	149&	90&	47\\\hline
\multirow{2}{*}{\textbf{14}} 	&	1607&	124&	46&	17&	8&	4&	2&	1&	0\\
 &	1207&	904&	687&	521&	387&	278&	191&	123&	72\\\hline
\multirow{2}{*}{\textbf{15}} 	&	1733&	125&	48&	20&	8&	4&	2&	1&	0\\
 &	1275&	971&	749&	578&	440&	327&	234&	159&	100\\\hline
\multirow{2}{*}{\textbf{16}} 	&	1883&	127&	44&	19&	8&	4&	2&	1&	0\\
 &	1284&	993&	784&	623&	490&	378&	284&	206&	142\\\hline
\multirow{2}{*}{\textbf{17}} 	&	2010&	118&	43&	19&	8&	4&	2&	1&	0\\
 &	1375&	1079&	858&	684&	542&	426&	329&	246&	177\\\hline
\multirow{2}{*}{\textbf{18}} 	&	2055&	117&	41&	18&	9&	4&	2&	1&	0\\
 &	1391&	1105&	893&	726&	589&	474&	377&	293&	221\\\hline
\multirow{2}{*}{\textbf{19}} 	&	2270&	133&	49&	21&	9&	4&	2&	1&	1\\
 &	1463&	1164&	941&	765&	621&	503&	404&	317&	243\\\hline
\multirow{2}{*}{\textbf{20}} 	&	2359&	123&	43&	17&	8&	4&	2&	1&	0\\
 &	1439&	1146&	936&	774&	642&	530&	433&	348&	273\\
\bottomrule
\end{tabular}
\caption{Mean value of the  linear relaxation from formulation $(P_2)$
  and     the      formulation     proposed     by      Chopra     and
  Rao~\cite{chopra1993partition}  over the  data set  $D_1$.  For each
  couple  $(n,K)$   the  value  on  the  second   line  corresponds  to
  formulation $(P_2)$.}
\label{tab:D1}\end{table}

\clearpage
\begin{table}[h!]\renewcommand{\arraystretch}{1.2}\centering \begin{tabular}{M{0.5cm}*{19}{r@{\hspace{0.2cm}}}r@{}}\toprule\multirow{2}{*}{\textbf{n}} & \multicolumn{8}{c}{\textbf{K}} \\& \textbf{2} & \textbf{3} &\textbf{4} &\textbf{5} &\textbf{6} &\textbf{7} &\textbf{8} &\textbf{9} &\textbf{10} \tabularnewline\hline
\multirow{2}{*}{\textbf{10}} 	&	-2246&	-2251&	-2213&	-2146&	-2064&	-1940&	-1754&	-1409&	0\\
 &	-1566&	-1620&	-1570&	-1431&	-1236&	-990&	-709&	-385&	0\\\hline
\multirow{2}{*}{\textbf{11}} 	&	-2784&	-2778&	-2740&	-2672&	-2591&	-2484&	-2341&	-2136&	-1731\\
 &	-1861&	-1919&	-1869&	-1746&	-1563&	-1330&	-1062&	-752&	-407\\\hline
\multirow{2}{*}{\textbf{12}} 	&	-3374&	-3352&	-3314&	-3245&	-3159&	-3057&	-2931&	-2775&	-2532\\
 &	-2115&	-2171&	-2149&	-2050&	-1891&	-1679&	-1422&	-1131&	-804\\\hline
\multirow{2}{*}{\textbf{13}} 	&	-4159&	-4140&	-4091&	-4017&	-3933&	-3833&	-3717&	-3577&	-3389\\
 &	-2557&	-2606&	-2576&	-2480&	-2326&	-2119&	-1867&	-1575&	-1250\\\hline
\multirow{2}{*}{\textbf{14}} 	&	-4936&	-4901&	-4846&	-4768&	-4674&	-4566&	-4452&	-4318&	-4157\\
 &	-2938&	-2997&	-2978&	-2891&	-2747&	-2556&	-2318&	-2037&	-1719\\\hline
\multirow{2}{*}{\textbf{15}} 	&	-5732&	-5693&	-5634&	-5552&	-5467&	-5361&	-5243&	-5115&	-4965\\
 &	-3332&	-3399&	-3388&	-3314&	-3180&	-2994&	-2759&	-2483&	-2171\\\hline
\multirow{2}{*}{\textbf{16}} 	&	-6683&	-6641&	-6582&	-6496&	-6410&	-6306&	-6193&	-6067&	-5919\\
 &	-3803&	-3861&	-3850&	-3779&	-3648&	-3467&	-3242&	-2970&	-2662\\\hline
\multirow{2}{*}{\textbf{17}} 	&	-7510&	-7484&	-7428&	-7347&	-7257&	-7150&	-7038&	-6919&	-6783\\
 &	-4250&	-4308&	-4304&	-4240&	-4117&	-3945&	-3726&	-3463&	-3161\\\hline
\multirow{2}{*}{\textbf{18}} 	&	-8539&	-8510&	-8449&	-8364&	-8272&	-8163&	-8050&	-7925&	-7784\\
 &	-4788&	-4839&	-4829&	-4768&	-4657&	-4492&	-4277&	-4014&	-3711\\\hline
\multirow{2}{*}{\textbf{19}} 	&	-9606&	-9559&	-9501&	-9412&	-9319&	-9209&	-9093&	-8960&	-8828\\
 &	-5300&	-5361&	-5357&	-5306&	-5199&	-5041&	-4839&	-4588&	-4300\\\hline
\multirow{2}{*}{\textbf{20}} 	&	-10770&	-10725&	-10666&	-10576&	-10483&	-10374&	-10259&	-10135&	-10000\\
 &	-5936&	-5991&	-5979&	-5915&	-5797&	-5628&	-5410&	-5153&	-4860\\
\bottomrule
\end{tabular}
\caption{Mean value of the  linear relaxation from formulation $(P_2)$
  and     the      formulation     proposed     by      Chopra     and
  Rao~\cite{chopra1993partition}  over the  data set  $D_2$.  For each
  couple $(n,K)$ the value on the second line corresponds to
  formulation $(P_2)$.}
\label{tab:D2}\end{table}
\begin{table}[h!]\renewcommand{\arraystretch}{1.2}\centering \begin{tabular}{@{}M{0.5cm}@{\hspace{0.2cm}}*{19}{r@{\hspace{0.25cm}}}r@{}}\toprule\multirow{2}{*}{\textbf{n}} & \multicolumn{8}{c}{\textbf{K}} \\& \textbf{2} & \textbf{3} &\textbf{4} &\textbf{5} &\textbf{6} &\textbf{7} &\textbf{8} &\textbf{9} &\textbf{10} \tabularnewline\hline
\multirow{2}{*}{\textbf{10}} 	&	-10929&	-10438&	-9935&	-9294&	-8622&	-7848&	-6925&	-5549&	0\\
 &	-9959&	-8724&	-7488&	-6253&	-5017&	-3780&	-2540&	-1293&	0\\\hline
\multirow{2}{*}{\textbf{11}} 	&	-13490&	-13001&	-12498&	-11865&	-11191&	-10453&	-9589&	-8557&	-6986\\
 &	-12375&	-11009&	-9643&	-8277&	-6911&	-5545&	-4174&	-2801&	-1422\\\hline
\multirow{2}{*}{\textbf{12}} 	&	-16107&	-15611&	-15111&	-14477&	-13802&	-13068&	-12246&	-11317&	-10185\\
 &	-14876&	-13397&	-11918&	-10439&	-8960&	-7480&	-5999&	-4517&	-3028\\\hline
\multirow{2}{*}{\textbf{13}} 	&	-19402&	-18912&	-18414&	-17773&	-17102&	-16372&	-15574&	-14680&	-13651\\
 &	-18019&	-16390&	-14761&	-13131&	-11502&	-9872&	-8243&	-6611&	-4977\\\hline
\multirow{2}{*}{\textbf{14}} 	&	-22638&	-22135&	-21626&	-20988&	-20311&	-19567&	-18784&	-17912&	-16963\\
 &	-21141&	-19390&	-17639&	-15888&	-14137&	-12385&	-10634&	-8883&	-7130\\\hline
\multirow{2}{*}{\textbf{15}} 	&	-26015&	-25517&	-25003&	-24366&	-23716&	-22983&	-22206&	-21357&	-20428\\
 &	-24402&	-22537&	-20673&	-18808&	-16943&	-15078&	-13214&	-11347&	-9481\\\hline
\multirow{2}{*}{\textbf{16}} 	&	-29871&	-29372&	-28864&	-28226&	-27573&	-26841&	-26078&	-25250&	-24336\\
 &	-28121&	-26122&	-24124&	-22125&	-20126&	-18128&	-16129&	-14130&	-12132\\\hline
\multirow{2}{*}{\textbf{17}} 	&	-33803&	-33308&	-32805&	-32174&	-31515&	-30786&	-30034&	-29226&	-28346\\
 &	-31935&	-29818&	-27702&	-25585&	-23469&	-21352&	-19236&	-17118&	-15001\\\hline
\multirow{2}{*}{\textbf{18}} 	&	-38126&	-37624&	-37113&	-36473&	-35816&	-35081&	-34329&	-33520&	-32650\\
 &	-36139&	-33893&	-31648&	-29402&	-27157&	-24911&	-22666&	-20420&	-18174\\\hline
\multirow{2}{*}{\textbf{19}} 	&	-42640&	-42143&	-41643&	-41001&	-40352&	-39621&	-38871&	-38047&	-37207\\
 &	-40515&	-38143&	-35771&	-33399&	-31027&	-28655&	-26283&	-23911&	-21539\\\hline
\multirow{2}{*}{\textbf{20}} 	&	-47565&	-47065&	-46560&	-45916&	-45262&	-44534&	-43788&	-42981&	-42148\\
 &	-45309&	-42801&	-40294&	-37786&	-35279&	-32771&	-30263&	-27755&	-25247\\
\bottomrule
\end{tabular}
\caption{Mean value of the  linear relaxation from formulation $(P_2)$
  and     the      formulation     proposed     by      Chopra     and
  Rao~\cite{chopra1993partition}  over the  data set  $D_3$.  For each
  couple $(n,K)$ the value on the second line corresponds to
  formulation $(P_2)$.}
\label{tab:D3}\end{table}

\clearpage

% In these  experiments, we  consider three  data  sets $D_1$,
% $D_2$ and  $D_3$ each  containing a hundred  complete graphs  for each
% value of  $n$ between $10$ and $20$.   The weight of the  edges in the
% data set are randomly generated such that:
% \begin{itemize}
% \item in $D_1$ the weights are within $[0, 500]$;
% \item in $D_2$ the weights are within $[-250,250]$;
% \item in $D_3$ the weights are within $[-500, 0]$.
% \end{itemize}

% n = 15,20 et K = 2,3,4,6,8,10

\begin{table}[h!]\renewcommand{\arraystretch}{1.2}\centering \begin{tabular}{c@{\hspace{0.2cm}}*{19}{r@{\hspace{0.2cm}}}}\toprule\multirow{2}{*}{\textbf{n}}
    &  \multicolumn{17}{c}{\textbf{K}}   \\&  \textbf{2}  &  \textbf{3}
    &\textbf{4}   &\textbf{5}   &\textbf{6}  &\textbf{7}   &\textbf{8}
    &\textbf{9} &\textbf{10} &  \textbf{11}& \textbf{12} & \textbf{13}
    &  \textbf{14}  &  \textbf{15}   &  \textbf{16}  &  \textbf{17}  &
    \textbf{18} & \textbf{19}\tabularnewline\hline
% \textbf{7} & 0 & 0 & 52 & 91 & 100\\
% \textbf{8} & 0 & 0 & 10 & 61 & 92 & 100\\
% \textbf{9} & 0 & 0 & 0 & 44 & 79 & 94 & 100\\
\textbf{10} & 0 & 0 & 0 & 12 & 47 & 75 & 95 & 100\\
\textbf{11} & 0 & 0 & 0 & 0 & 15 & 50 & 77 & 94 & 100\\
\textbf{12} & 0 & 0 & 0 & 0 & 8 & 29 & 70 & 91 & 99 & 100\\
\textbf{13} & 0 & 0 & 0 & 0 & 0 & 8 & 37 & 60 & 87 & 99 & 100\\
\textbf{14} & 0 & 0 & 0 & 0 & 0 & 0 & 20 & 45 & 72 & 91 & 98 & 100\\
\textbf{15} & 0 & 0 & 0 & 0 & 0 & 0 & 6 & 28 & 61 & 84 & 94 & 99 & 100\\
\textbf{16} & 0 & 0 & 0 & 0 & 0 & 0 & 3 & 13 & 31 & 66 & 80 & 91 & 98 & 100\\
\textbf{17} & 0 & 0 & 0 & 0 & 0 & 0 & 0 & 3 & 18 & 46 & 69 & 84 & 90 & 96 & 100\\
\textbf{18} & 0 & 0 & 0 & 0 & 0 & 0 & 0 & 0 & 6 & 19 & 51 & 70 & 91 & 97 & 99 & 100\\
\textbf{19} & 0 & 0 & 0 & 0 & 0 & 0 & 0 & 0 & 1 & 13 & 34 & 56 & 76 & 92 & 96 & 99 & 100\\
\textbf{20} & 0 & 0 & 0 & 0 & 0 & 0 & 0 & 0 & 0 & 4 & 12 & 37 & 60 & 78 & 94 & 99 & 100 & 100\\
\bottomrule\end{tabular}
\caption{Number of instances of $D_1$ whose linear relaxation gives an
    optimal solution.}
  \label{tab:isRoot}
\end{table}

\begin{table}[h!]\renewcommand{\arraystretch}{1.2}\centering \begin{tabular}{c@{\hspace{0.2cm}}*{19}{m{0.5cm}@{\hspace{0.19cm}}}}\toprule\multirow{2}{*}{\textbf{n}}
    &  \multicolumn{17}{c}{\textbf{K}}   \\&  \textbf{2}  &  \textbf{3}
    &\textbf{4}   &\textbf{5}   &\textbf{6}  &\textbf{7}   &\textbf{8}
    &\textbf{9} &\textbf{10} &  \textbf{11}& \textbf{12} & \textbf{13}
    &  \textbf{14}  &  \textbf{15}   &  \textbf{16}  &  \textbf{17}  &
    \textbf{18} & \textbf{19}\tabularnewline\hline
% \textbf{7} & 45 & 59 & 19 & 9 & 4\\
% \textbf{8} & 37 & 55 & 30 & 6 & 4 & 0\\
% \textbf{9} & 34 & 48 & 21 & 1 & 0 & 0 & 0\\
\textbf{10} & 20 & 53 & 31 & 4 & 3 & 0 & 0 & 0\\
\textbf{11} & 16 & 37 & 16 & 4 & 1 & 1 & 0 & 0 & 0\\
\textbf{12} & 8 & 21 & 16 & 4 & 0 & 0 & 0 & 0 & 0 & 0\\
\textbf{13} & 12 & 17 & 11 & 2 & 1 & 0 & 0 & 0 & 0 & 0 & 0\\
\textbf{14} & 6 & 14 & 9 & 3 & 0 & 0 & 0& 0 & 0 & 0 & 0 & 0\\
\textbf{15} & 2 & 10 & 9 & 3 & 0 & 0 & 0 & 0 & 0 & 0 & 0 & 0 & 0\\
\textbf{16} & 2 & 5 & 5 & 1 & 0 & 0 & 0 & 0 & 0 & 0 & 0 & 0 & 0 & 0\\
\textbf{17} & 2 & 10 & 4 & 2 & 0 & 0 & 0 & 0 & 0 & 0 & 0 & 0 & 0 & 0 & 0\\
\textbf{18} & 2 & 5 & 0 & 0 & 0 & 0 & 0 & 0 & 0 & 0 & 0 & 0 & 0 & 0 & 0 & 0\\
\textbf{19} & 1 & 3 & 0 & 0 & 0 & 0 & 0 & 0 & 0 & 0 & 0 & 0 & 0 & 0 & 0 & 0 & 0\\
\textbf{20} & 0 & 1 & 0 & 0 & 0 & 0 & 0 & 0 & 0 & 0 & 0 & 0 & 0 & 0 & 0 & 0 & 0 & 0\\
\bottomrule\end{tabular}
\caption{Number of instances of $D_2$ whose linear relaxation gives an
    optimal solution.}
\label{tab:isRoot2}
\end{table}

  % In  the next  tables,  we show  how  much the  linear relaxation  of
  % $(P_2)$ can be improved on small complete graphs (up to $n=20$) when
  % adding   $2-$chorded   inequalities~\ref{tab:resTCC},  $2-$partition
  % inequalities~\ref{tab:resST}, or general clique 
  % inequalities~\ref{tab:resKP1}.   The  numbers   in  the  tables  are
  % average percentages of improvement over a hundred random instances.

The  values of $n$  considered in  our experiments  are low  enough to
allow   an    exhaustive   enumeration   of   all    the   valid   paw
inequalities.  Separating the  $2-$partition  inequalities is  NP-hard
\cite{oosten2001clique}  and we are  not able  to enumerate  them all.
Instead we use a  heuristic inspired from the well-known Kernighan-Lin
algorithm \cite{kernighan1970eflicient}.  A  similar procedure is used
for the separation of the general clique inequalities.  \bigskip

Separating  the   $2-$chorded  cycle   inequalities  is  a   bit  more
technical. In \cite{muller1996partial},  M\"uller adapted an approach,
introduced  by  Barahona  and  Mahjoub~\cite{mahjoub1986polytope},  to
separate in  polynomial time odd closed walk  inequalities in directed
graphs.  M\"uller  showed that  the same algorithm  can be  applied to
undirected graphs to  allow the separation of a  class of inequalities
which includes  the $2-$chorded  cycle inequalities.  We  adapted this
approach  to separate $2-$chorded  cycle inequalities  from cycles
which may contain repetitions.
\newpage

We define  a graph $H=(V_H, A_H)$  such that for each  edge $ij\in E$,
$A_H$ contains (see example Figure~\ref{fig:tcc-separation}):
\begin{itemize}
\item  eight  vertices:  $u_1^{ij},\,u_2^{ij},\,v_1^{ij},\,  v_2^{ij},\,
  u_1^{ji},\,u_2^{ji},\,v_1^{ji}$ and $v_2^{ji}$; 
\item  four  arcs:  $(u_1^{ij},  u_2^{ij})$,  $(v_1^{ij},  v_2^{ij})$,
  $(u_1^{ji}, u_2^{ji})$, $(v_1^{ji}, v_2^{ji})$ of weight $x_{ij}$.
\end{itemize}

Moreover, to each pair of edges $ij,ik\in E$ with a common endnode, we
associate  four  additional   arcs  in  $A_H$:  $(u_2^{ji},v_1^{ik})$,
$(v_2^{ji},u_1^{ik})$, $(u_2^{ki},v_1^{ij})$ and $(v_2^{ki},u_1^{ij})$
of weight $-x_{jk}-\frac{1}{2}$.

Let $C=\s{c_1, \hdots, c_{2p+1}}$ be an odd cycle of $G$. By construction,
$C$   induces   a   walk    in   $H$   from   $u_1^{c_1,c_{2}}$   to
$v_1^{c_1,c_{2}}$ (see example Figure~\ref{fig:tcc-cycle}) of weight
\[
\begin{array}{l@{}l@{\hspace{0.1cm}}l}
x_{c_1,c_2}  - \frac{1}{2} -  x_{c_1, c_3} + \hdots  + x_{c_{2p+1},
  c_1} - \frac{1}{2} - x_{c_{2p+1},c_2} \\ \qquad= x(E(C)) - x(E(\overline C)) -
 \frac{2p+1}{2}\\
\qquad = x(E(C)) - x(E(\overline C)) -
\lfloor\frac{|C|}{2}\rfloor - \frac{1}{2}.
\end{array}
\]

Thus, there exists a cycle $C$ which violates inequality (\ref{eq:tc}) if and
only if there exists a  path from $u_1^{c_1,c_2}$ to $v_1^{c_1,c_2}$ in
$H$ whose length is greater than $-\frac{1}{2}$. 

M\"uller's approach  for undirected graphs only considers  four vertices per
edge   ($u_1^{ij},\,u_2^{ij},\,v_1^{ij}$   and   $v_2^{ij}$).   As   a
consequence, a path in $H$ between $u_1^{ij}$ and $v_1^{ij}$
corresponds to a sequence of edges in $G$ such that each edge has a
common endnode with its neighbors. Such a sequence may not be a cycle
(\textit{e.g:}  $\s{ij, ik,  il}$).  Four additional  vertices per  edge
enable to  give an  orientation to the  edge in the  obtained sequence
and thus ensure that it is a cycle (possibly with vertex repetitions).

\begin{figure}[h]
  \centering
  \scalebox{0.9}{\input{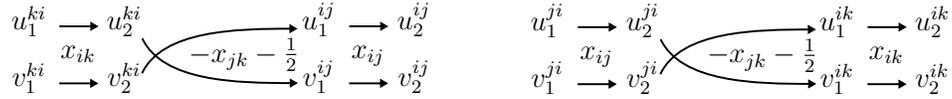}}
  \caption{Vertices and arcs of $H$ associated to edges $(ij)$ and $(ik)$
    in $E$.}
  \label{fig:tcc-separation}
\end{figure}

\begin{figure}[h]
  \centering
  \input{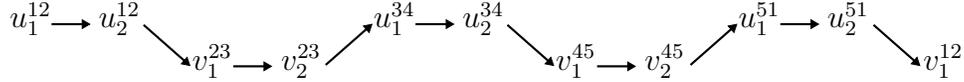}
  \caption{Path    in   $H$   which    corresponds   to    the   cycle
    $C=\s{1,2,3,4,5}$ in $G$.}
  \label{fig:tcc-cycle}
\end{figure}

After  creating $H$, we  obtain for  all $ij\in  E$ the  shortest path
between  $u_1^{ij}$ and $v_1^{ij}$  thanks to  Floyd-Warshall shortest
path algorithm  \cite{ahuja1993network}. And deduce  the corresponding
cycle    in    $G$    and    its    associated    $2-$chorded    cycle
inequality.  Eventually, the  violated inequalities  are added  to the
problem and the  root relaxation is updated. This  process is repeated
until no more violated inequality is found.

% \begin{sidewaystable}[h]
%   \centering
%   \input{./new_table_tcc2.tex}
%   \caption{Average improvement (\%) over the linear relaxation when
%     adding $2-$chorded cycle inequalities.  Each  line  corresponds  to  one  hundred  random
%       complete graphs of $n$ vertices.}
%   \label{tab:resTCC}
% \end{sidewaystable}

% \begin{sidewaystable}[h]
%   \centering
%   \input{./new_table_st2.tex}
%   \caption{Average improvement (\%) over the linear relaxation when
%     adding $2-$partition inequalities.  Each  line  corresponds  to  one  hundred  random
%       complete graphs of $n$ vertices.}
%   \label{tab:resST}
% \end{sidewaystable}

%   \begin{sidewaystable}[h]
%     \centering \input{./new_table_kp12.tex}
%     \caption{Average improvement (\%)  over the linear relaxation when
%       adding general clique
%       inequalities.  Each  line  corresponds  to  one  hundred  random
%       complete graphs of $n$ vertices.}
%     \label{tab:resKP1}
%   \end{sidewaystable} 
% \bigskip
\bigskip

% The   results    in   tables~\ref{tab:resTCC},   \ref{tab:resST}   and
% \ref{tab:resKP1} show  that our formulation is much  weaker for $K=2$.
% Even  with  a heuristic  separation  procedure  for the  $2-$partition
% inequalities we  are able  to strengthen the  relaxation significantly
% for the  intermediate values of  $K$, and impressively for  the lowest
% values.  The general clique inequalities seem to strengthen the
% relaxation more than the  $2-$partition inequalities for $K\leq 4$ but
% this  may  be  because we  did  not  have  to  resort to  a  heuristic
% procedure.   Adding  $2-$chorded cycle  inequalities  lead to  average
% improvements in the order of 10 to 20 percents.

For the instances in $D_1$~--  which are the most difficult ones~-- no
violated paw  inequality has been found.  This  family of inequalities
does    not     seem    to     be    efficient    in     this    case.
Table~\ref{tab:tcc_graph1}  shows  the  results obtained  when  adding
2-chorded cycle  inequalities.  The mean percentage  of improvement is
low  (lower than  $5\%$).  These  inequalities are  less likely  to be
efficient  for  this type  of  instances.   Moreover, the  improvement
decreases when  $K$ gets  closer to  $n$.  This is  true also  for the
other classes  of inequalities,  and it can  be explained by  the fact
that the  number of  solutions solved to  optimality increases  in this
part of the tables.

\begin{table}[h!]\renewcommand{\arraystretch}{1.2}\centering \begin{tabular}{M{0.5cm}*{9}{r@{\hspace{0.5cm}}}r@{}}\toprule\multirow{2}{*}{\textbf{n}} & \multicolumn{8}{c}{\textbf{K}} \\& \textbf{2} & \textbf{3} &\textbf{4} &\textbf{5} &\textbf{6} &\textbf{7} &\textbf{8} &\textbf{9} &\textbf{10} \tabularnewline\hline
% \textbf{7} & 
% 0,2 & 0,2 & 0,1 & 0,0 & 0,0 & \\
% \textbf{8} & 
% 0,4 & 0,5 & 0,4 & 0,0 & 0,0 & 0,0 & \\
% \textbf{9} & 
% 0,8 & 0,7 & 0,8 & 0,3 & 0,0 & 0,0 & 0,0 & \\
\textbf{10} & 
1,2 & 1,5 & 0,9 & 0,7 & 0,1 & 0,1 & 0,0 & 0,0 & \\
\textbf{11} & 
0,9 & 1,1 & 0,9 & 0,6 & 0,2 & 0,0 & 0,0 & 0,0 & 0,0 & \\
\textbf{12} & 
1,7 & 1,9 & 1,7 & 1,4 & 1,0 & 0,6 & 0,2 & 0,0 & 0,0 & \\
\textbf{13} & 
1,8 & 2,0 & 1,9 & 1,5 & 0,6 & 0,3 & 0,0 & 0,0 & 0,0 & \\
\textbf{14} & 
2,2 & 2,5 & 2,5 & 2,1 & 1,3 & 0,7 & 0,4 & 0,1 & 0,0 & \\
\textbf{15} & 
2,1 & 2,9 & 3,0 & 2,5 & 1,9 & 1,1 & 0,5 & 0,2 & 0,0 & \\
\textbf{16} & 
2,5 & 2,8 & 2,7 & 2,1 & 1,7 & 1,4 & 0,7 & 0,4 & 0,1 & \\
\textbf{17} & 
2,9 & 3,2 & 3,0 & 2,7 & 2,3 & 1,7 & 1,4 & 0,7 & 0,3 & \\
\textbf{18} & 
2,9 & 3,4 & 3,3 & 2,9 & 2,5 & 1,9 & 1,3 & 0,8 & 0,4 & \\
\textbf{19} & 
3,2 & 3,6 & 3,9 & 3,7 & 3,2 & 2,6 & 2,0 & 1,2 & 0,7 & \\
\textbf{20} & 
4,1 & 4,7 & 4,8 & 4,6 & 4,1 & 3,4 & 3,0 & 2,6 & 1,6 & \\
\bottomrule
\end{tabular}
\caption{
  Mean gain in percentage over the instances of $D_1$ when adding 
  2-chorded cycle inequalities.
}
\label{tab:tcc_graph1}
\end{table}

Table~\ref{tab:st_graph1}       and~\ref{tab:dep_graph1}       present
respectively the results obtained with the 2-partition and the general
clique inequalities. The improvement  is significantly higher than for
the  2-chorded cycle  inequalities.  The  general  clique inequalities
lead  to a  spectacular  improvement  of the  solution  of the  linear
relaxation  for the  lowest values  of $K$.

\begin{table}[h!]\renewcommand{\arraystretch}{1.2}\centering \begin{tabular}{M{0.5cm}*{9}{r@{\hspace{0.5cm}}}r@{}}\toprule\multirow{2}{*}{\textbf{n}} & \multicolumn{8}{c}{\textbf{K}} \\& \textbf{2} & \textbf{3} &\textbf{4} &\textbf{5} &\textbf{6} &\textbf{7} &\textbf{8} &\textbf{9} &\textbf{10} \tabularnewline\hline
% \textbf{7} & 
% 2,9 & 2,9 & 2,0 & 0,1 & 0,0 & \\
% \textbf{8} & 
% 4,5 & 4,9 & 3,5 & 1,9 & 0,2 & 0,0 & \\
% \textbf{9} & 
% 6,8 & 8,0 & 8,3 & 5,7 & 1,3 & 0,5 & 0,0 & \\
\textbf{10} & 
7,5 & 9,3 & 8,1 & 7,1 & 3,5 & 1,6 & 0,5 & 0,0 & \\
\textbf{11} & 
9,6 & 10,9 & 10,6 & 8,2 & 5,3 & 2,9 & 1,6 & 0,8 & 0,0 & \\
\textbf{12} & 
11,4 & 13,2 & 12,6 & 10,6 & 7,7 & 4,2 & 2,8 & 1,1 & 0,1 & \\
\textbf{13} & 
12,8 & 15,3 & 15,6 & 13,5 & 9,5 & 6,2 & 3,4 & 2,1 & 0,5 & \\
\textbf{14} & 
14,9 & 17,0 & 17,8 & 15,7 & 12,8 & 9,4 & 5,7 & 3,1 & 1,8 & \\
\textbf{15} & 
16,9 & 19,6 & 20,5 & 19,3 & 17,3 & 14,3 & 10,6 & 6,1 & 3,5 & \\
\textbf{16} & 
17,8 & 19,9 & 20,5 & 19,8 & 17,9 & 14,9 & 11,3 & 8,2 & 5,0 & \\
\textbf{17} & 
19,1 & 21,5 & 22,1 & 21,3 & 20,2 & 17,3 & 14,0 & 10,6 & 7,3 & \\
\textbf{18} & 
21,1 & 23,3 & 23,8 & 23,2 & 21,3 & 18,7 & 15,6 & 12,1 & 8,4 & \\
\textbf{19} & 
22,5 & 25,0 & 26,0 & 26,2 & 25,7 & 23,5 & 20,1 & 16,5 & 12,1 & \\
\textbf{20} & 
25,1 & 28,6 & 30,1 & 29,6 & 28,4 & 26,1 & 22,8 & 18,9 & 14,4 & \\
\bottomrule
\end{tabular}
\caption{ Mean gain in percentage over the instances of $D_1$ when adding 
  2-partition inequalities.}
\label{tab:st_graph1}
\end{table}

\begin{table}[h!]\renewcommand{\arraystretch}{1.2}\centering \begin{tabular}{M{0.5cm}*{8}{r@{\hspace{0.5cm}}}r@{\hspace{0.2cm}}}\toprule\multirow{2}{*}{\textbf{n}} & \multicolumn{8}{c}{\textbf{K}} \\& \textbf{2} & \textbf{3} &\textbf{4} &\textbf{5} &\textbf{6} &\textbf{7} &\textbf{8} &\textbf{9} &\textbf{10} \tabularnewline\hline
% \textbf{7} & 
% 130,1 & 46,4 & 11,9 & 1,1 & 0,0  \\
% \textbf{8} & 
% 176,3 & 78,5 & 24,8 & 8,0 & 0,9 & 0,0  \\
% \textbf{9} & 
% 232,1 & 111,1 & 52,1 & 17,8 & 3,9 & 1,1 & 0,0  \\
\textbf{10} & 
298,0 & 153,6 & 73,7 & 32,2 & 15,9 & 4,3 & 0,4 & 0,0  \\
\textbf{11} & 
368,7 & 197,8 & 104,3 & 49,6 & 17,8 & 7,5 & 3,8 & 0,8 & 0,0  \\
\textbf{12} & 
434,0 & 239,2 & 135,5 & 73,0 & 31,2 & 12,4 & 5,4 & 2,3 & 0,6  \\
\textbf{13} & 
505,7 & 288,4 & 171,5 & 98,5 & 49,2 & 19,9 & 9,1 & 5,1 & 2,0  \\
\textbf{14} & 
593,6 & 343,6 & 206,9 & 119,2 & 68,2 & 33,2 & 14,7 & 6,3 & 3,4  \\
\textbf{15} & 
673,2 & 395,4 & 244,7 & 152,0 & 95,5 & 54,2 & 25,5 & 13,6 & 6,1  \\
\textbf{16} & 
782,9 & 465,0 & 294,0 & 181,8 & 112,1 & 68,1 & 35,4 & 17,6 & 9,8  \\
\textbf{17} & 
855,6 & 515,5 & 328,1 & 214,8 & 137,8 & 84,9 & 48,2 & 23,7 & 13,4  \\
\textbf{18} & 
964,1 & 582,2 & 376,3 & 247,7 & 159,8 & 102,0 & 63,6 & 36,9 & 19,6  \\
\textbf{19} & 
1041,6 & 637,4 & 421,9 & 289,1 & 199,5 & 134,0 & 88,2 & 52,4 & 26,8  \\
\textbf{20} & 
1186,6 & 739,3 & 491,1 & 336,3 & 228,9 & 153,1 & 103,1 & 63,5 & 35,9  \\
\bottomrule
\end{tabular}
\caption{Mean gain in percentage over the instances of $D_1$ when adding 
  general clique inequalities.}
\label{tab:dep_graph1}
\end{table}
\bigskip

In the case of the  instances of $D_2$, the 2-chorded cycle inequality
still provide low  gains as represented in table~\ref{tab:tcc_graph2}.
For a fix  value of $n$ we observe that the  mean improvement does not
vary depending  on $K$ except for  the top right corner  of the table.
These couples  $(n,K)$ correspond to the instances  which may directly
be    solved   optimally   by    the   relaxation    (as   represented
table~\ref{tab:isRoot2}).

\begin{table}[h!]\renewcommand{\arraystretch}{1.2}\centering \begin{tabular}{M{0.5cm}*{9}{r@{\hspace{0.5cm}}}r@{}}\toprule\multirow{2}{*}{\textbf{n}} & \multicolumn{8}{c}{\textbf{K}} \\& \textbf{2} & \textbf{3} &\textbf{4} &\textbf{5} &\textbf{6} &\textbf{7} &\textbf{8} &\textbf{9} &\textbf{10} \tabularnewline\hline
% \textbf{7} & 
% 0,1 & 0,0 & 0,2 & 0,2 & 0,0  & \\
% \textbf{8} & 
% 0,2 & 0,1 & 0,1 & 0,2 & 0,2 & 0,0 & \\
% \textbf{9} & 
% 0,3 & 0,3 & 0,3 & 0,5 & 0,8 & 0,4 & 0,0  & \\
\textbf{10} & 
0,4 & 0,4 & 0,4 & 0,6 & 0,8 & 0,8 & 0,4 & 0,0 & \\
\textbf{11} & 
0,4 & 0,4 & 0,4 & 0,6 & 1,0 & 1,5 & 1,2 & 0,5 & 0,0  & \\
\textbf{12} & 
0,9 & 0,7 & 0,7 & 0,9 & 1,3 & 2,1 & 2,5 & 1,7 & 0,7 & \\
\textbf{13} & 
1,0 & 1,0 & 1,0 & 1,1 & 1,3 & 1,9 & 2,7 & 2,8 & 1,7 & \\
\textbf{14} & 
1,2 & 1,2 & 1,2 & 1,3 & 1,5 & 1,9 & 2,9 & 3,5 & 2,8 & \\
\textbf{15} & 
2,1 & 2,1 & 2,2 & 2,3 & 2,5 & 2,7 & 3,5 & 4,2 & 4,4 & \\
\textbf{16} & 
2,6 & 2,7 & 2,7 & 2,7 & 2,8 & 3,1 & 3,6 & 4,7 & 5,2 & \\
\textbf{17} & 
3,1 & 3,2 & 3,2 & 3,3 & 3,3 & 3,4 & 3,7 & 4,5 & 5,5 & \\
\textbf{18} & 
4,0 & 4,0 & 4,0 & 4,0 & 4,1 & 4,2 & 4,3 & 4,8 & 5,9 & \\
\textbf{19} & 
4,5 & 4,8 & 4,9 & 5,0 & 5,0 & 5,0 & 5,0 & 5,2 & 5,9 & \\
\textbf{20} & 
5,5 & 5,8 & 5,8 & 5,8 & 5,8 & 5,8 & 5,9 & 6,0 & 6,4 & \\

\bottomrule
\end{tabular}
\caption{Mean gain in percentage over the instances of $D_2$ when adding 
  2-chorded cycle inequalities.}
\label{tab:tcc_graph2}
\end{table}

The   results  for  the   2-partition  inequalities,   represented  in
Figure~\ref{tab:st_graph2}, are similar. Although lower
than for $D_1$, the mean gains are twice as high as those given
by the 2-chorded cycle inequalities.

\begin{table}[h!]\renewcommand{\arraystretch}{1.2}\centering \begin{tabular}{M{0.5cm}*{9}{r@{\hspace{0.5cm}}}r@{}}\toprule\multirow{2}{*}{\textbf{n}} & \multicolumn{8}{c}{\textbf{K}} \\& \textbf{2} & \textbf{3} &\textbf{4} &\textbf{5} &\textbf{6} &\textbf{7} &\textbf{8} &\textbf{9} &\textbf{10} \tabularnewline\hline
% \textbf{7} & 
% 1,3 & 1,3 & 1,8 & 3,0 & 5,8 & \\
% \textbf{8} & 
% 1,6 & 1,1 & 1,1 & 2,4 & 5,0 & 7,6 & \\
% \textbf{9} & 
% 1,5 & 1,3 & 1,6 & 3,4 & 5,4 & 8,3 & 10,7 & \\
\textbf{10} & 
1,8 & 1,4 & 1,2 & 2,2 & 3,8 & 6,4 & 10,1 & 13,5 & \\
\textbf{11} & 
2,3 & 1,9 & 1,8 & 2,6 & 4,0 & 6,9 & 9,1 & 11,7 & 13,2 & \\
\textbf{12} & 
3,0 & 2,3 & 2,2 & 2,9 & 4,4 & 6,6 & 9,5 & 11,8 & 13,2 & \\
\textbf{13} & 
3,7 & 3,4 & 3,5 & 3,9 & 4,8 & 6,6 & 9,6 & 12,5 & 14,4 & \\
\textbf{14} & 
4,3 & 4,0 & 3,9 & 4,2 & 5,0 & 6,5 & 8,5 & 11,1 & 13,1 & \\
\textbf{15} & 
5,3 & 5,0 & 5,1 & 5,3 & 5,9 & 6,8 & 8,8 & 11,4 & 14,0 & \\
\textbf{16} & 
5,6 & 5,5 & 5,5 & 5,6 & 6,1 & 7,0 & 8,3 & 11,0 & 13,1 & \\
\textbf{17} & 
7,3 & 7,1 & 7,3 & 7,3 & 7,6 & 8,1 & 8,8 & 10,5 & 13,1 & \\
\textbf{18} & 
8,0 & 7,7 & 7,9 & 7,9 & 8,2 & 8,3 & 9,0 & 10,3 & 12,5 & \\
\textbf{19} & 
9,2 & 9,5 & 9,6 & 9,6 & 9,9 & 9,8 & 10,3 & 11,1 & 12,5 & \\
\textbf{20} & 
10,5 & 10,6 & 10,6 & 10,6 & 10,8 & 10,7 & 10,9 & 11,7 & 12,7 & \\

\bottomrule
\end{tabular}
\caption{Mean gain in percentage over the instances of $D_2$ when adding 
  2-partition inequalities.}
\label{tab:st_graph2}
\end{table}

Table~\ref{tab:dep_graph2} shows that  the gain is significantly lower
for the  general clique  inequalities. These inequalities  require the
presence of  a minimum  number of  edges from a  clique $E(Z)$  in the
$K$-partition. Since  the we minimize the  weight of the  edges in the
$K$ sets,  an instance of  $D_1$ will tend  to have less edges  in the
sets than  an instance of $D_2$. Nevertheless,  these inequalities can
still be useful for the small values of $K$.

\begin{table}[h!]\renewcommand{\arraystretch}{1.2}\centering \begin{tabular}{M{0.5cm}*{9}{r@{\hspace{0.5cm}}}r@{}}\toprule\multirow{2}{*}{\textbf{n}} & \multicolumn{8}{c}{\textbf{K}} \\& \textbf{2} & \textbf{3} &\textbf{4} &\textbf{5} &\textbf{6} &\textbf{7} &\textbf{8} &\textbf{9} &\textbf{10} \tabularnewline\hline
% \textbf{7} & 
% 21,0 & 2,2 & 0,2 & 0,0 & 0,0  & \\
% \textbf{8} & 
% 22,2 & 2,1 & 0,1 & 0,0 & 0,0 & 0,0 & \\
% \textbf{9} & 
% 11,2 & 1,5 & 0,2 & 0,0 & 0,0 & 0,0 & 0,0 & \\
\textbf{10} & 
16,7 & 2,6 & 0,2 & 0,0 & 0,0 & 0,0 & 0,0 & 0,0 & \\
\textbf{11} & 
16,2 & 2,7 & 0,4 & 0,0 & 0,0 & 0,0 & 0,0 & 0,0 & 0,0  & \\
\textbf{12} & 
17,5 & 3,2 & 0,5 & 0,1 & 0,0 & 0,0 & 0,0 & 0,0 & 0,0 & \\
\textbf{13} & 
17,2 & 4,0 & 0,8 & 0,1 & 0,0 & 0,0 & 0,0 & 0,0 & 0,0 & \\
\textbf{14} & 
17,3 & 4,3 & 0,8 & 0,1 & 0,0 & 0,0 & 0,0 & 0,0 & 0,0 & \\
\textbf{15} & 
19,5 & 5,7 & 1,2 & 0,1 & 0,0 & 0,0 & 0,0 & 0,0 & 0,0 & \\
\textbf{16} & 
20,2 & 6,1 & 1,6 & 0,2 & 0,0 & 0,0 & 0,0 & 0,0 & 0,0 & \\
\textbf{17} & 
22,5 & 7,7 & 2,1 & 0,3 & 0,0 & 0,0 & 0,0 & 0,0 & 0,0 & \\
\textbf{18} & 
23,6 & 8,1 & 2,5 & 0,4 & 0,0 & 0,0 & 0,0 & 0,0 & 0,0 & \\
\textbf{19} & 
24,8 & 9,2 & 2,9 & 0,5 & 0,0 & 0,0 & 0,0 & 0,0 & 0,0 & \\
\textbf{20} & 
24,9 & 9,8 & 3,2 & 0,5 & 0,0 & 0,0 & 0,0 & 0,0 & 0,0 & \\

\bottomrule
\end{tabular}
\caption{Mean gain in percentage over the instances of $D_2$ when adding 
  general clique inequalities.}
\label{tab:dep_graph2}
\end{table}

As for the paw inequalities (table~\ref{tab:new_graph2}), they seem to
complement the  general clique  inequalities well: the  gain increases
with $K$.
% These  two families  illustrate the
% need  to adapt  the choice  of  the family  of inequalities  separated
% according to the value of $(n,K)$.

\begin{table}[h!]\renewcommand{\arraystretch}{1.2}\centering \begin{tabular}{M{0.5cm}*{9}{r@{\hspace{0.5cm}}}r@{}}\toprule\multirow{2}{*}{\textbf{n}} & \multicolumn{8}{c}{\textbf{K}} \\& \textbf{2} & \textbf{3} &\textbf{4} &\textbf{5} &\textbf{6} &\textbf{7} &\textbf{8} &\textbf{9} &\textbf{10} \tabularnewline\hline
% \textbf{7} & 
% 0,0 & 0,4 & 3,4 & 10,0 & 17,7 & \\
% \textbf{8} & 
% 0,0  & 0,3 & 2,7 & 7,8 & 16,4 & 24,9 & \\
% \textbf{9} & 
% 0,0  & 0,2 & 2,0 & 6,4 & 13,7 & 21,3 & 28,5 & \\
\textbf{10} & 
0,0 & 0,1 & 1,4 & 5,2 & 11,0 & 18,5 & 26,5 & 35,3 & \\
\textbf{11} & 
0,0  & 0,1 & 1,2 & 4,4 & 9,7 & 16,7 & 23,8 & 30,4 & 37,3 & \\
\textbf{12} & 
0,0  & 0,0 & 0,6 & 2,9 & 7,7 & 14,4 & 21,7 & 27,8 & 33,4 & \\
\textbf{13} & 
0,0 & 0,1 & 0,6 & 2,3 & 6,1 & 12,2 & 19,3 & 26,3 & 32,2 & \\
\textbf{14} & 
0,0  & 0,0 & 0,3 & 1,5 & 4,5 & 9,9 & 16,5 & 22,8 & 28,5 & \\
\textbf{15} & 
0,0 & 0,0 & 0,1 & 0,8 & 3,0 & 7,6 & 13,9 & 20,6 & 26,7 & \\
\textbf{16} & 
0,0  & 0,0 & 0,1 & 0,6 & 2,2 & 6,1 & 11,8 & 18,5 & 24,7 & \\
\textbf{17} & 
0,0 & 0,0 & 0,1 & 0,4 & 1,4 & 4,1 & 9,0 & 15,3 & 21,8 & \\
\textbf{18} & 
0,0 & 0,0 & 0,1 & 0,2 & 0,8 & 2,8 & 7,2 & 13,1 & 19,6 & \\
\textbf{19} & 
0,0 & 0,0 & 0,0 & 0,1 & 0,4 & 1,6 & 5,0 & 10,1 & 16,2 & \\
\textbf{20} & 
0,0 & 0,0 & 0,0 & 0,1 & 0,2 & 1,0 & 3,8 & 8,6 & 14,6 & \\

\bottomrule
\end{tabular}
\caption{Mean gain in percentage over the instances of $D_2$ when adding 
  paw inequalities.}
\label{tab:new_graph2}
\end{table}

When instances with only negative weights are considered, we
fail  at   finding  any  violated   inequality  except  for   the  paw
inequalities  (\ref{tab:new_graph3}).   These  instances  are  however
easier to solve in practice.

\begin{table}[h!]\renewcommand{\arraystretch}{1.2}\centering \begin{tabular}{M{0.5cm}*{9}{r@{\hspace{0.5cm}}}r@{}}\toprule\multirow{2}{*}{\textbf{n}} & \multicolumn{8}{c}{\textbf{K}} \\& \textbf{2} & \textbf{3} &\textbf{4} &\textbf{5} &\textbf{6} &\textbf{7} &\textbf{8} &\textbf{9} &\textbf{10} \tabularnewline\hline
% \textbf{7} & 
% 0,6 & 1,6 & 2,9 & 3,4 & 18,4 & \\
% \textbf{8} & 
% 0,6 & 1,5 & 2,6 & 4,0 & 9,7 & 23,7 & \\
% \textbf{9} & 
% 0,6 & 1,4 & 2,4 & 3,7 & 4,8 & 15,7 & 28,2 & \\
\textbf{10} & 
0,7 & 1,5 & 2,5 & 3,8 & 5,2 & 9,6 & 20,3 & 32,0 & \\
\textbf{11} & 
0,7 & 1,4 & 2,4 & 3,6 & 5,0 & 6,5 & 14,2 & 24,3 & 33,3 & \\
\textbf{12} & 
0,6 & 1,4 & 2,2 & 3,3 & 4,6 & 6,1 & 9,9 & 18,2 & 27,2 & \\
\textbf{13} & 
0,6 & 1,4 & 2,3 & 3,3 & 4,6 & 6,0 & 7,6 & 13,6 & 21,5 & \\
\textbf{14} & 
0,6 & 1,3 & 2,1 & 3,1 & 4,2 & 5,5 & 7,0 & 10,2 & 16,6 & \\
\textbf{15} & 
0,6 & 1,3 & 2,1 & 3,0 & 4,1 & 5,3 & 6,8 & 8,4 & 13,2 & \\
\textbf{16} & 
0,6 & 1,3 & 2,0 & 2,9 & 3,9 & 5,1 & 6,4 & 7,9 & 10,5 & \\
\textbf{17} & 
0,6 & 1,3 & 2,0 & 2,9 & 3,8 & 4,9 & 6,2 & 7,7 & 9,2 & \\
\textbf{18} & 
0,6 & 1,2 & 1,9 & 2,7 & 3,6 & 4,7 & 5,8 & 7,1 & 8,6 & \\
\textbf{19} & 
0,6 & 1,2 & 1,9 & 2,7 & 3,5 & 4,5 & 5,6 & 6,8 & 8,2 & \\
\textbf{20} & 
0,6 & 1,2 & 1,8 & 2,6 & 3,4 & 4,3 & 5,3 & 6,5 & 7,7 & \\

\bottomrule
\end{tabular}
\caption{Mean gain in percentage over the instances of $D_3$ when adding 
  paw inequalities.}
\label{tab:new_graph3}
\end{table}
\bigskip

\noindent\textbf{Conclusion}

We  have   introduced  a   new  formulation  for   the  K-partitioning
problem. Thanks  to the addition  of representative variables,  we are
able  to break  the symmetry  in the  edge variable  formulation.  The
resulting formulation  shows to be stronger than  the formulation with
node-cluster  variables and  edge  variables used  by several  authors
(\cite{chopra1993partition,kaibel2011orbitopal,fan2010linear})  when K
is greater  than $2$, at  least on complete  graphs.  We have  proved in
this paper  facet-defining results  for several classical  families of
inequalities, and  for a new family  of inequalities that  seems to be
useful when there are negative edges.

The computing time for the 20-node  instances is only of a few minutes
using CPLEX 12.5 on a  desktop computer. To actually solve problems to
optimality for  higher values  of $n$ will  need to find  a compromise
between the separation  and the solving of the  linear programs at the
nodes of a branch and bound  procedure. Still the results of this work
are promising and show the interest of the polyhedral approach.

Further work  will concentrate on improving  the separation procedures
and developing  a branch  and cut framework  for the  application that
motivated this study \cite{alesmethodology}.

\section{References}
 \bibliographystyle{elsarticle-num} 
  \bibliography{bibliography}

\end{document}